\documentclass[12pt]{article}
\usepackage{amsthm,amsfonts,amssymb,amsmath}
\usepackage{mathtools}
\usepackage{cite}
\usepackage{epsfig}
\usepackage{url}
\usepackage{xcolor,tikz}
\usetikzlibrary{matrix}
\usepackage{multicol}
\usepackage{authblk,fullpage,breqn,hyperref}
\usepackage{stmaryrd}
\usepackage{caption}
\usepackage{subcaption}

\usepackage[shortlabels]{enumitem}

\newcommand{\newConj}[2]{
  \expandafter\newcommand\csname conjText#1\endcsname{#2}
\begin{conj}
\label{conj:#1}
\csname conjText#1\endcsname
\end{conj}
}

\newcommand{\newQues}[2]{
  \expandafter\newcommand\csname quesText#1\endcsname{#2}
\begin{ques}
\label{ques:#1}
\csname quesText#1\endcsname
\end{ques}
}

\newcommand{\newProb}[2]{
  \expandafter\newcommand\csname probText#1\endcsname{#2}
\begin{prob}
\label{prob:#1}
\csname probText#1\endcsname
\end{prob}
}

\newcommand{\newThm}[2]{
  \expandafter\newcommand\csname thmText#1\endcsname{#2}
\begin{thm}
\label{th:#1}
\csname thmText#1\endcsname
\end{thm}
}

\newcommand{\newLem}[2]{
  \expandafter\newcommand\csname lemText#1\endcsname{#2}
\begin{lem}
\label{lem:#1}
\csname lemText#1\endcsname
\end{lem}
}

\newcommand{\repeatConj}[1]{
\csname theoremstyle\endcsname{plain}
\csname newtheorem\endcsname*{#1ConjRepeat}{Conjecture~\csname ref\endcsname{conj:#1}}
\csname begin\endcsname{#1ConjRepeat}
\csname conjText#1\endcsname
\csname end\endcsname{#1ConjRepeat}
}

\newcommand{\repeatQues}[1]{
\csname theoremstyle\endcsname{definition}
\csname newtheorem\endcsname*{#1QuesRepeat}{Question~\csname ref\endcsname{ques:#1}}
\csname begin\endcsname{#1QuesRepeat}
\csname quesText#1\endcsname
\csname end\endcsname{#1QuesRepeat}
}

\newcommand{\repeatProb}[1]{
\csname theoremstyle\endcsname{definition}
\csname newtheorem\endcsname*{#1ProbRepeat}{Problem~\csname ref\endcsname{prob:#1}}
\csname begin\endcsname{#1ProbRepeat}
\csname probText#1\endcsname
\csname end\endcsname{#1ProbRepeat}
}

\newcommand{\repeatThm}[1]{
\csname theoremstyle\endcsname{plain}
\csname newtheorem\endcsname*{#1ThmRepeat}{Theorem~\csname ref\endcsname{th:#1}}
\csname begin\endcsname{#1ThmRepeat}
\csname thmText#1\endcsname
\csname end\endcsname{#1ThmRepeat}
}

\newcommand{\repeatLem}[1]{
\csname theoremstyle\endcsname{plain}
\csname newtheorem\endcsname*{#1LemRepeat}{Lemma~\csname ref\endcsname{lem:#1}}
\csname begin\endcsname{#1LemRepeat}
\csname lemText#1\endcsname
\csname end\endcsname{#1LemRepeat}
}

\numberwithin{equation}{section}

\setlist{leftmargin=3\parindent,labelindent=3\parindent}
\setlist[enumerate]{%
  leftmargin=3\parindent,%
  align=left,%
  labelwidth=3\parindent,%
  labelsep=0pt%
}
\setlist[enumerate,1]{%
  label={\normalfont (\thesection.\arabic{equation})}, ref={\normalfont \thesection.\arabic{equation}},
  resume%
}

\makeatletter
\newtheorem*{rep@theorem}{\rep@title}
\newcommand{\newreptheorem}[2]{%
\newenvironment{rep#1}[1]{%
 \def\rep@title{#2 \ref{##1}}%
 \begin{rep@theorem}}%
 {\end{rep@theorem}}}
\makeatother

\newtheorem{thm}[equation]{Theorem}
\newtheorem{cor}[equation]{Corollary}
\newtheorem{lem}[equation]{Lemma}
\newtheorem{prop}[equation]{Proposition}
\newtheorem{claim}[equation]{Claim}

\newtheorem{conj}[equation]{Conjecture}
\newreptheorem{lemma}{Lemma}

\theoremstyle{definition}
\newtheorem{defn}[equation]{Definition}
\newtheorem{ex}[equation]{Example}

\newtheorem{rem}[equation]{Remark}

\newtheorem{ques}[equation]{Question}
\newtheorem{prob}[equation]{Problem}

\theoremstyle{remark}

\title{Off-Diagonal Commonality of Graphs via Entropy}
\author{Natalie Behague\thanks{Supported by a PIMS Postdoctoral Fellowship.} }
\author{Natasha  Morrison\thanks{Research supported by NSERC Discovery Grant RGPIN-2021-02511, NSERC Early Career Supplement DGECR-2021-00047 and a Start-Up Grant from the University of Victoria.}}
\author{Jonathan A. Noel\thanks{Research supported by NSERC Discovery Grant RGPIN-2021-02460, NSERC Early Career Supplement DGECR-2021-00024 and a Start-Up Grant from the University of Victoria.}} 

\affilsep=9pt
\affil{\normalsize{Department of Mathematics and Statistics, University of Victoria, Victoria, B.C., Canada.}}
\affil{\texttt{\{nbehague,nmorrison,noelj\}@uvic.ca}}

\DeclareMathOperator{\Hom}{Hom}
\DeclareMathOperator{\Aut}{Aut}

\DeclareMathOperator{\rng}{rng}

\DeclareTextCompositeCommand{\v}{OT1}{l}{l\nobreak\hspace{-.1em}'}
\DeclareTextCompositeCommand{\v}{OT1}{t}{t\nobreak\hspace{-.1em}'\nobreak\hspace{-.15em}}

\begin{document}

\maketitle

\begin{abstract}
A graph $H$ is \emph{common} if the limit as $n\to\infty$ of the minimum density of monochromatic labelled copies of $H$ in an edge colouring of $K_n$ with red and blue is attained by a sequence of quasirandom colourings. We apply an information-theoretic approach to show that certain graphs obtained from odd cycles and paths via gluing operations are common. In fact, for every pair $(H_1,H_2)$ of such graphs, there exists $p\in(0,1)$ such that an appropriate linear combination of red copies of $H_1$ and blue copies of $H_2$ is minimized by a quasirandom colouring in which $p\binom{n}{2}$ edges are red; such a pair $(H_1,H_2)$ is said to be \emph{$(p,1-p)$-common}. Our approach exploits a strengthening of the common graph property for odd cycles that was recently proved using Schur convexity. We also exhibit a $(p,1-p)$-common pair $(H_1,H_2)$ such that $H_2$ is uncommon. 
\end{abstract}

\section{Introduction}

A \emph{homomorphism} from a graph $H$ to a graph $G$ is a function $f:V(H)\to V(G)$ such that $f(u)f(v)\in E(G)$ whenever $uv\in E(H)$. The \emph{homomorphism density} of $H$ in $G$, denoted $t(H,G)$, is the probability that a uniformly random function from $V(H)$ to $V(G)$ is a homomorphism. The homomorphism density function has played a central role in the development of the celebrated theory of graph limits~\cite{Lovasz12} and is ubiquitous in modern extremal graph theory. For instance, the famous Sidorenko Conjecture~\cite{Sidorenko93} states that, if $H$ is a non-empty\footnote{A graph is \emph{non-empty} if its edge set is non-empty.} bipartite graph, then
\begin{equation}\label{eq:Sid}t(H,G)\geq t(K_2,G)^{e(H)}\end{equation}
for every graph $G$, where $e(H):=|E(H)|$ and $v(H):=|V(H)|$. A graph $H$ satisfying this conjecture is said to be \emph{Sidorenko}. In other words, a graph $H$ is Sidorenko if the homomorphism density of $H$ among all graphs $G$ of a given edge density is asymptotically minimized by taking $G$ to be a binomial random graph in which each edge appears with probability $t(K_2,G)$. A well-studied notion, with close ties to Sidorenko's Conjecture, is that of a common graph; a non-empty graph $H$ is said to be \emph{common} if
\begin{equation}\label{eq:common}t(H,G) + t(H,\overline{G})\geq (1/2)^{e(H)-1}-o(1)\end{equation}
for every graph $G$, where $\overline{G}$ is the complement of $G$ and the $o(1)$ term tends to zero as $v(G)\to\infty$. By thinking of the edges of $G$ and $\overline{G}$ as being red or blue, respectively, one gets that $H$ is common if the number of (homomorphic) copies of $H$ in a red/blue colouring of the edges of a complete graph is asymptotically minimized by an uniformly random colouring. 

Our focus in this paper is on leveraging inequalities related to \eqref{eq:Sid} and \eqref{eq:common} to obtain new examples of common graphs and pairs $(H_1,H_2)$ of graphs which satisfy an ``off-diagonal'' generalization of the common graph property (see Definition~\ref{defn:commonPair}). To describe our results, it is convenient to work in the language of graph limits. A \emph{kernel} is a bounded measurable function $U:[0,1]^2\to \mathbb{R}$ such that $U(x,y)=U(y,x)$ for all $x,y\in [0,1]$, and a \emph{graphon} is a kernel $W$ such that $0\leq W\leq1$. The \emph{homomorphism density} of a graph $H$ in a kernel $U$ is
\[t(H,U):=\int_{[0,1]^{V(H)}}\prod_{uv\in E(H)}U(x_u,x_v)\prod_{v\in V(H)}dx_v.\]
The set of graphons can be thought of as the ``completion'' of the set of dense graphs~\cite{LovaszSzegedy06}. In particular, every asymptotic inequality involving homomorphism density function in graphs is equivalent to an analogous inequality in graphons. E.g. a non-empty graph $H$ is Sidorenko if and only if $t(H,W)\geq t(K_2,W)^{e(H)}$ for every graphon $W$ and common if and only if $t(H,W)+t(H,1-W)\geq (1/2)^{e(H)-1}$ for every graphon $W$; for a more general statement, see Lemma~\ref{lem:limit}. The following definition is key to the approach of this paper.

\begin{defn}
Say that a non-empty graph $H$ is \emph{strongly common} if
\[t(H,W)+t(H,1-W)\geq t(K_2,W)^{e(H)}+t(K_2,1-W)^{e(H)}\]
for every graphon $W$. 
\end{defn}

Clearly, if $H$ is Sidorenko, then it is strongly common and, if $H$ is strongly common, then it is common. A classical result of Goodman~\cite{Goodman59} (see Theorem~\ref{th:Goodman}) implies that $K_3$ is strongly common. Our first result is that the same is true for the $5$-cycle.

\newThm{C5}{$C_5$ is strongly common.}

In a preprint of~\cite{FirstPaper}, we made the following conjecture.\footnote{After submitting~\cite{FirstPaper} to arxiv, we decided to split it into two papers, one of which is the present paper. Therefore, most of the results, open problems, etc, in this paper were contained in the early preprints of~\cite{FirstPaper}.}

\begin{conj}
\label{conj:oddCycles}
All odd cycles are strongly common.
\end{conj}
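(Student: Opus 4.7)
I would approach the conjecture by combining spectral analysis with an inductive argument on $k$, taking Goodman's theorem and Theorem~\ref{th:C5} as the base cases. Let $T_W:L^2[0,1]\to L^2[0,1]$ denote the self-adjoint Hilbert--Schmidt operator with kernel $W$; its eigenvalues $\{\lambda_i(W)\}$, counted with multiplicity, satisfy $t(C_\ell,W) = \mathrm{tr}(T_W^\ell) = \sum_i \lambda_i(W)^\ell$ for $\ell\geq 3$, and $p := t(K_2,W)$ is the Rayleigh quotient of $T_W$ at the constant function $\mathbf{1}$, giving $\lambda_1(W)\geq p$ and likewise $\lambda_1(1-W)\geq 1-p$. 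Moreover, the operator identity $T_{1-W} = P_{\mathbf{1}} - T_W$, where $P_{\mathbf{1}}$ is the rank-one orthogonal projection onto $\mathrm{span}(\mathbf{1})$, shows that if $W$ is $p$-\emph{regular} (i.e.\ $\int W(x,y)\,dy = p$ for a.e.\ $x$) then the non-principal eigenvalues of $T_{1-W}$ are the negatives of those of $T_W$. Since $2k+1$ is odd, these contributions cancel in pairs and the equality $t(C_{2k+1},W)+t(C_{2k+1},1-W) = p^{2k+1}+(1-p)^{2k+1}$ holds. Thus the conjecture is equivalent to showing that $p$-regular graphons minimize the left-hand side.

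For the inductive step, I would factor $\lambda_i^{2k+1} = \lambda_i^{2k-1}\cdot\lambda_i^2$ and introduce the squared kernel $W^{*2}(x,y) := \int W(x,z)W(z,y)\,dz$, whose eigenvalues are $\lambda_i(W)^2$, so that $t(C_{2k+1},W) = \mathrm{tr}(T_{W^{*2}}^{k}\cdot T_W)$ and analogously for $1-W$. The plan is then to compare $t(C_{2k+1},W)+t(C_{2k+1},1-W)$ with the $C_{2k-1}$-analogue applied to $W$ together with a suitable ``defect'' kernel, invoking the inductive hypothesis along with the Schur--convexity strengthening of odd-cycle commonality alluded to in the abstract, which is designed to control the $(2k-1)$-th power-sum of eigenvalues in terms of the edge density. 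A complementary tactic, to be used if the induction stalls, is a direct symmetrization argument: replace $W$ by a one-parameter averaging that pushes $W$ toward a $p$-regular graphon and show that this operation cannot increase $t(C_{2k+1},W)+t(C_{2k+1},1-W)$; the equality at $p$-regular graphons suggests such a monotonicity should exist.

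The principal obstacle is controlling the interplay between the Perron eigenvalue and the rest of the spectrum. Although $\lambda_1(W)\geq p$ and $\lambda_1(1-W)\geq 1-p$, any excess $\lambda_1(W)-p$ forces compensating $\ell^2$-mass elsewhere in the spectrum, since $\sum_i \lambda_i(W)^2 = \|W\|_2^2 \leq p$, and for odd exponents the negative eigenvalues of $W$ contribute negatively. This tradeoff is precisely what makes $p$-regularity extremal, but quantifying it is subtle: the rank-one perturbation relating $T_W$ and $T_{1-W}$ disrupts the exact pairing of eigenvalues, so the cancellation that holds in the regular case must be replaced by an approximate identity whose error term is bounded by a ``regularity defect'' of $W$. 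I expect that devising a genuinely new entropic or majorization inequality to play this role, rather than iterating the argument for $C_5$, will be the main technical content of a proof; in particular the $C_5$ approach appears to get away with absorbing the spectral tail into a single Cauchy--Schwarz step, a shortcut that will not survive as the cycle length grows.
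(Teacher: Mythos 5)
The statement you are attempting is not proved anywhere in this paper: it appears as Conjecture~\ref{conj:oddCycles}, and the paper resolves it only by citing Kim and Lee~\cite{KimLee22+}, whose (stronger, kernel-level) result is recorded as Theorem~\ref{th:OddCycles}. So the comparison is with their Schur-convexity argument rather than with an internal proof.

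Your spectral setup is correct and is genuinely the right starting point: $t(C_{2k+1},W)=\sum_i\lambda_i(W)^{2k+1}$, the rank-one identity $T_{1-W}=P_{\mathbf 1}-T_W$, the bounds $\lambda_1(W)\ge p$ and $\sum_i\lambda_i(W)^2\le p$, and the exact cancellation for $p$-regular $W$ are all accurate, and the reformulation ``regular graphons are minimizers'' is faithful. The genuine gap is that neither of your two proposed mechanisms for the non-regular case is carried out, and they constitute the entire difficulty. The inductive step has no content yet: $\mathrm{tr}(T_{W^{*2}}^{k}T_W)=\mathrm{tr}(T_W^{2k+1})$ is a tautology, the ``defect kernel'' is never defined, and the inductive hypothesis only controls $t(C_{2k-1},U)+t(C_{2k-1},1-U)$ for pairs of kernels summing to the constant $1$, which your factorization does not produce. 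The symmetrization route is equally unproven: you would need a flow toward a regular graphon along which $t(C_{2k+1},W)+t(C_{2k+1},1-W)$ is monotone, and none is exhibited; equality of the functional at the regular endpoint does not imply such monotonicity exists. What actually closes the argument in~\cite{KimLee22+} is to exploit the rank-one relation you identify via eigenvalue interlacing, obtaining a majorization relation between the combined spectra of $T_W$ and $T_{1-W}$ and the ``ideal'' spectrum, and then to apply Schur convexity of the odd power sum --- an exact inequality, not the ``approximate identity with a regularity-defect error term'' you anticipate. As written, your proposal identifies the correct framework and the correct obstacle but supplies no proof of the key inequality; it is a research plan, not a proof.
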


Kim and Lee~\cite{KimLee22+} recently confirmed Conjecture~\ref{conj:oddCycles} using a novel approach based on Schur convexity (see Theorem~\ref{th:OddCycles}). Chen and Ma~\cite{ChenMa23+} proved that the only strongly common graph containing a triangle is $K_3$ itself. This was very recently extended by Versteegen~\cite{Versteegen23+} to the statement that the only strongly common graphs of odd girth are the odd cycles.  Lee and Noel~\cite{LeeNoel23+} recently applied the fact that $K_3$ is strongly common to find the first example of an uncommon graph $H$ such that the disjoint union $H\sqcup H$ is common. 

In this paper, we use the fact that odd cycles are strongly common to obtain new examples of common graphs. These graphs are built up from odd cycles and paths via specific gluing operations. As a special case, say that a graph $H$ is a \emph{simple $C_m$-tree} if either $H=C_m$ or $H$ can be obtained from a smaller simple $C_m$-tree, say $H'$, by adding a copy of $C_m$ and identifying one vertex or edge of the new copy of $C_m$ with a vertex or edge, respectively, of $H'$. Say that $H$ is a \emph{$C_m$-vertex tree} or a \emph{$C_m$-edge tree} if every identification in the construction of $H$ involves a vertex or edge, respectively. Sidorenko~\cite{Sidorenko96} proved that all $K_3$-vertex trees and $K_3$-edge trees are common. These results were unified by Grzesik, Lee, Lidick\'y and Volec~\cite{GrzesikLeeLidickyVolec22} to the statement that all simple $K_3$-trees are common; as discussed in~\cite[Section~5]{GrzesikLeeLidickyVolec22}, their proof easily extends to show that simple $C_m$-trees are common for all odd $m\geq3$. 

We generalize this result of~\cite{GrzesikLeeLidickyVolec22} in two ways. First, we partially solve an open problem posed in~\cite[Section~5]{GrzesikLeeLidickyVolec22}, which asks whether one can obtain common graphs from odd cycles via more general gluing operations than those used to construct simple $C_m$-trees. Our most general result (Theorem~\ref{th:cycleTree}) applies to graphs obtained from gluing together subgraphs of odd cycles under certain technical conditions. The next result provides three specific examples which illustrate the types of gluing operations that we can handle.

\newThm{Sample}{The three graphs depicted in Figure~\ref{fig:glueExamples} are common. }

\begin{figure}[htbp]
    \centering
         \centering
         \includegraphics[scale=1]{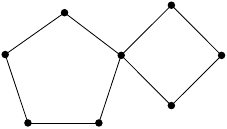}\quad
         \includegraphics[scale=1]{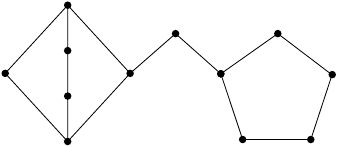}\quad
         \includegraphics[scale=1]{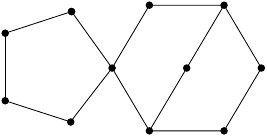}
    \caption{Three new examples of common graphs.}
    \label{fig:glueExamples}
\end{figure}

We also extend the results of~\cite{GrzesikLeeLidickyVolec22} to the following off-diagonal generalization of the notion of common graphs, which was recently introduced in~\cite{FirstPaper}. 

\begin{defn}
\label{defn:commonPair}
For $p_1,p_2\in (0,1)$ such that $p_1+p_2=1$, a pair $(H_1,H_2)$ of non-empty graphs is said to be \emph{$(p_1,p_2)$-common} if the following holds for any graphons $W_1$ and $W_2$ such that $W_1+W_2=1$:
\begin{equation}
\label{eq:commonPair}
\frac{t(H_1,W_1)}{e(H_1)p_1^{e(H_1)-1}} + \frac{t(H_2,W_2)}{e(H_2)p_2^{e(H_2)-1}}\geq \frac{p_1}{e(H_1)}+\frac{p_2}{e(H_2)}.
\end{equation}
\end{defn}

It is easily observed that a graph $H$ is common if and only if $(H,H)$ is $(1/2,1/2)$-common, and so Definition~\ref{defn:commonPair} extends the notion of a common graph to an ``off-diagonal'' setting. As another application of our most general result, Theorem~\ref{th:cycleTree}, we characterize pairs $(p_1,p_2)$ for which $(H_1,H_2)$ is $(p_1,p_2)$-common when $H_1$ and $H_2$ are simple $C_m$-trees for odd $m$.

\newThm{SimpleTrees}{Let $m\geq3$ be odd, let $H_1$ and $H_2$ be simple $C_m$-trees and let $p_1,p_2\in (0,1)$ such that $p_1+p_2=1$. Then $(H_1,H_2)$ is $(p_1,p_2)$-common if and only if
\[\frac{e(H_1)-v(H_1)+1}{e(H_1)p_1^{m-1}}=\frac{e(H_2)-v(H_2)+1}{e(H_2)p_2^{m-1}}.\]}

The study of common graphs was inspired by the 1959 result of Goodman~\cite{Goodman59} which implies that $K_3$ is common. Shortly after this, Erd\H{o}s~\cite{Erdos62} conjectured that all complete graphs are common. More boldly, Burr and Rosta~\cite{BurrRosta80} conjectured that every non-empty graph is common. The latter conjecture was disproved by Sidorenko~\cite{Sidorenko89}, who showed that the triangle with a pendant edge (known as the \emph{paw} graph) is uncommon; the same construction shows that many other graphs, e.g., $K_3\sqcup K_2$, are uncommon. Around the same time, Thomason~\cite{Thomason89} showed that $K_k$ is uncommon for all $k\geq4$, thereby refuting the conjecture of Erd\H{o}s~\cite{Erdos62}. Jagger, \v{S}\v{t}ov\'i\v{c}ek and Thomason~\cite{JaggerStovicekThomason96} generalized Thomason's~\cite{Thomason89} theorem to the statement that every common graph is $K_4$-free. 

In an early arxiv version of~\cite{FirstPaper}, we conjectured that there exists a pair $(H_1,H_2)$ and $p\in (0,1)$ such that $H_2$ is uncommon and $(H_1,H_2)$ is $(p,1-p)$-common. Since Sidorenko's~\cite{Sidorenko89} construction proves that $K_3\sqcup K_2$ is uncommon, our next result resolves this conjecture in the affirmative. Let $D$ denote the graph obtained from $K_4$ by deleting one edge, which is referred to as the \emph{diamond} graph. 

\newThm{K3unionK2}{$(D,K_3\sqcup K_2)$ is $(p,1-p)$-common for $p=\frac{8-2\sqrt{10}}{3}$.}

For context, we remark that it is not true that every graph $H$ is contained in a $(p,1-p)$-common pair for some $p\in (0,1)$. Indeed, one of the results of~\cite{FirstPaper} says that, if $H$ contains a $K_4$, then $H$ is not contained in a $(p,1-p)$-common pair for any $p\in (0,1)$. Therefore, in finding a $(p,1-p)$-common pair $(H_1,H_2)$ such that $H_2$ is uncommon, one needs to choose the graph $H_2$ somewhat carefully. 

Various other problems in the area of common graphs have been well studied. One of the oldest and most well-known problems has been to determine whether there exist common graphs of arbitrary chromatic number~\cite{Hatami+12,JaggerStovicekThomason96}. The first example of a common graph of chromatic number four was obtained by Hatami, Hladk\'y, Kr\'a\v{l}, Norine and Razborov~\cite{Hatami+12} using the powerful flag algebra method. More recently, Kr\'a\v{l}, Volec and Wei~\cite{KralVolecWei22+} settled this problem in its entirety by finding connected common graphs of every chromatic number. Ko and Lee~\cite{KoLee22} strengthened this to the statement that there are common graphs of every chromatic number with high connectivity. Multicolour extensions of the notion of common graphs have also been studied in, e.g.,~\cite{FirstPaper,Kral+22,Cummings+13,JaggerStovicekThomason96}. 

This paper is structured as follows.
In Section~\ref{sec:strongly}, we combine a standard ``algebraic expansion technique'' for kernels with recent results on homomorphism densities of paths to prove Theorem~\ref{th:C5}. Then, in Section~\ref{sec:entropy}, we recall some basic facts about the entropy of a random variable and state a ``convexity lemma'' which allows us to use examples of strongly common graphs and binomial inequalities for homomorphism densities to obtain new examples of $(p_1,p_2)$-common pairs of graphs. In Section~\ref{sec:example}, we prove Theorem~\ref{th:Sample} for the first graph in Figure~\ref{fig:glueExamples}; this serves as a relatively simple example to illustrate the applicability of the tools introduced in Section~\ref{sec:entropy}. We then generalize this approach in Section~\ref{sec:gluing} to obtain a binomial inequality involving homomorphism densities of certain graphs built up from subgraphs of a fixed graph $F$ via gluing operations. In Section~\ref{sec:oddCycles}, we feed these binomial inequalities in the case $F=C_m$ for odd $m\geq3$ into the convexity lemma from Section~\ref{sec:entropy} to obtain a result (Theorem~\ref{th:cycleTree}) which implies both of Theorems~\ref{th:Sample} and~\ref{th:SimpleTrees}. In Section~\ref{sec:supersat}, we prove Theorem~\ref{th:K3unionK2}; the proof involves reducing the problem of showing that $(D,K_3\sqcup K_2)$ is $(p,1-p)$-common to a certain optimization problem, where the constraints are implied by a classical supersaturation theorem in extremal graph theory. We conclude the paper in Section~\ref{sec:concl} by proposing some open problems. The proof of the convexity lemma from Section~\ref{sec:entropy} is provided in Appendix~\ref{app:convexity}.

\section{Strongly Common Graphs}
\label{sec:strongly}

Our goal in this section is to prove Theorem~\ref{th:C5}. As a warm-up, we prove the analogous statement for $K_3$, which is known as Goodman's Formula~\cite{Goodman59}. The proofs in this section make use of the following standard lemma. Given a graph $H$ and a set $E \subseteq E(H)$, let $H[E]$ be the graph with vertex set $V(H)$ and edge set $E$.

\begin{lem}
\label{lem:expansion}
For every graph $H$, kernel $W$ and $p\in \mathbb{R}$, if $U=W-p$, then
\[t(H,W)=\sum_{E\subseteq E(H)}p^{e(H)-|E|}t(H[E],U).\]
\end{lem}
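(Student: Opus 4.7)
The plan is a direct algebraic expansion; there is essentially no obstacle beyond bookkeeping. I would start from the definition
\[t(H,W)=\int_{[0,1]^{V(H)}}\prod_{uv\in E(H)}W(x_u,x_v)\prod_{v\in V(H)}dx_v,\]
and substitute $W(x_u,x_v)=U(x_u,x_v)+p$ inside the product over edges, turning the integrand into a product of $e(H)$ binomials, one per edge of $H$.

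Next, I would expand this product by distributivity. Each term in the expansion is indexed by a subset $E\subseteq E(H)$ recording the edges from which the $U$-summand was chosen (while the remaining $e(H)-|E|$ edges contribute a factor of $p$). This gives the pointwise identity
\[\prod_{uv\in E(H)}\bigl(U(x_u,x_v)+p\bigr)=\sum_{E\subseteq E(H)}p^{e(H)-|E|}\prod_{uv\in E}U(x_u,x_v).\]
Since the sum is finite and $W$ (hence $U$) is bounded, I can exchange sum and integral freely.

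Finally, for each $E\subseteq E(H)$, integrating $\prod_{uv\in E}U(x_u,x_v)$ over $[0,1]^{V(H)}$ is precisely the definition of $t(H[E],U)$, because $H[E]$ is declared to share the full vertex set $V(H)$ (the vertices that become isolated in $H[E]$ contribute trivial integrals of $1$, which is exactly what the definition of homomorphism density for a graph with isolated vertices produces). Recombining yields the claimed identity
\[t(H,W)=\sum_{E\subseteq E(H)}p^{e(H)-|E|}t(H[E],U).\]
The only conceptual point worth emphasizing is this vertex-set convention for $H[E]$; once that is pinned down, the argument is just distributivity plus linearity of the integral, so I do not anticipate any genuine difficulty.
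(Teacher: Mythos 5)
Your proposal is correct and follows exactly the paper's argument: substitute $W=U+p$ into the integrand, expand the product over edges by distributivity, and identify each resulting term with $p^{e(H)-|E|}t(H[E],U)$ using the convention that $H[E]$ retains the full vertex set. The paper's proof is a one-line version of the same expansion, so there is nothing further to add.
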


\begin{proof}
We have
\[t(H,W)=\int_{[0,1]^{V(H)}}\prod_{uv\in E(H)}W(x_u,x_v)\prod_{v\in V(H)}dx_v = \int_{[0,1]^{V(H)}}\prod_{uv\in E(H)}(U(x_u,x_v)+p)\prod_{v\in V(H)}dx_v.\]
The result follows by expanding the product in the integrand. 
\end{proof}

For $k\geq1$, let $P_k$ denote the path on $k$ vertices.

\begin{thm}[Goodman's Formula~\cite{Goodman59}]
\label{th:Goodman}
If $W_1$ and $W_2$ are kernels satisfying $W_1+W_2=1$, then
\[t(K_3,W_1)+t(K_3,W_2) = \sum_{i=1}^2\left[t(K_2,W_i)^3 +\frac{3}{2}(t(P_3,W_i) - t(K_2,W_i)^2)\right].\]
\end{thm}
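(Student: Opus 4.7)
The plan is to apply Lemma~\ref{lem:expansion} to each of $t(K_3,W_1)$ and $t(K_3,W_2)$ with the natural choice of shift $p_i:=t(K_2,W_i)$, then exploit the fact that $W_1+W_2=1$ forces the residual kernels $U_i:=W_i-p_i$ to satisfy $U_2=-U_1$. Grouping subgraphs of $K_3$ by their number of edges (one empty graph, three single edges, three copies of $P_3$, one copy of $K_3$), Lemma~\ref{lem:expansion} gives
\[
t(K_3,W_i)=p_i^3+3p_i^2\,t(K_2,U_i)+3p_i\,t(P_3,U_i)+t(K_3,U_i).
\]

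The next step is to simplify each term on the right. Since $\int U_i=0$ by construction, the single-edge term $t(K_2,U_i)$ vanishes. Integrating the identity $W_1+W_2=1$ over $[0,1]^2$ yields $p_1+p_2=1$, and subtracting $p_i$ from both sides of $W_1+W_2=1$ gives $U_2=-U_1$. Consequently the cubic residual term is odd in $U_1$, so $t(K_3,U_1)+t(K_3,U_2)=0$, while the quadratic residual term is even, so $t(P_3,U_1)=t(P_3,U_2)$. A direct expansion of $t(P_3,U_i)=\iiint U_i(x,y)U_i(y,z)$ along with $\int U_i=0$ gives the clean identity $t(P_3,U_i)=t(P_3,W_i)-p_i^2$.

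Summing the expansion over $i\in\{1,2\}$ and plugging in these simplifications produces
\[
t(K_3,W_1)+t(K_3,W_2)=p_1^3+p_2^3+3(p_1+p_2)\,t(P_3,U_1),
\]
and then the parity $t(P_3,U_1)=t(P_3,U_2)$ together with $p_1+p_2=1$ lets us rewrite $3\,t(P_3,U_1)$ as the average $\tfrac{3}{2}\bigl(t(P_3,U_1)+t(P_3,U_2)\bigr)=\tfrac{3}{2}\bigl(t(P_3,W_1)+t(P_3,W_2)-p_1^2-p_2^2\bigr)$, which is exactly the right-hand side of the theorem after reindexing.

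There is no real obstacle here: the entire argument is bookkeeping once one makes the two key observations that $U_2=-U_1$ (which kills the $K_3$-residual and synchronizes the $P_3$-residuals) and that $\int U_i=0$ (which kills the single-edge terms and trivializes the $P_3$-to-$W_i$ conversion). The only mildly delicate point is remembering that $p_1+p_2=1$ when collapsing the factor $3(p_1+p_2)$; if one instead expanded around an arbitrary common shift $p\in\mathbb{R}$ the algebra would still work but the final formula would be less symmetric.
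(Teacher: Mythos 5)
Your proposal is correct and follows essentially the same route as the paper: expand both $t(K_3,W_i)$ around the edge densities via Lemma~\ref{lem:expansion}, use $t(K_2,U)=0$ and the sign flip $U_2=-U_1$ to kill the odd-order residual terms, and then convert the surviving $t(P_3,U)$ term back into $t(P_3,W_i)$ and $t(K_2,W_i)^2$. The only cosmetic difference is that you convert via the per-index identity $t(P_3,U_i)=t(P_3,W_i)-t(K_2,W_i)^2$, whereas the paper obtains the same conversion by applying Lemma~\ref{lem:expansion} to $t(P_3,W_1)+t(P_3,W_2)$ and solving for $t(P_3,U)$.
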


\begin{proof}
Let $W_1$ and $W_2$ be kernels such that $W_1+W_2=1$ and let $U$ be the kernel defined by $U:= W_1-t(K_2,W_1)$. Note that, since $W_1+W_2=1$, we have $W_2=t(K_2,W_2)-U$. Also,
\begin{equation}\label{eq:Uzero}t(K_2,U) = \int_0^1\int_0^1(W_1(x_1,x_2)-t(K_2,W_1))dx_1dx_2 = 0.\end{equation}
Using Lemma~\ref{lem:expansion}, we get that 
\[t(K_3,W_1)=t(K_2,W_1)^3 + 3t(K_2,W_1)^2t(K_2,U)+3t(K_2,W_1)t(P_3,U) + t(K_3,U)\]
and 
\[t(K_3,W_2)=t(K_2,W_2)^3 - 3t(K_2,W_2)^2t(K_2,U)+3t(K_2,W_2)t(P_3,U) - t(K_3,U).\]
Summing these two quantities and applying \eqref{eq:Uzero} yields
\[t(K_3,W_1)+t(K_3,W_2)=t(K_2,W_1)^3 + t(K_2,W_2)^3 + 3(t(K_2,W_1)+t(K_2,W_2))t(P_3,U)\]
\[=t(K_2,W_1)^3 + t(K_2,W_2)^3 + 3t(P_3,U).\]
Now, applying Lemma~\ref{lem:expansion} to $t(P_3,W_1)+t(P_3,W_2)$, and using \eqref{eq:Uzero} again, we get that 
\[t(P_3,W_1)+t(P_3,W_2) = t(K_2,W_1)^2+t(K_2,W_2)^2 +2t(P_3,U).\]
Solving for $t(P_3,U)$ in this expression and substituting it into the expression for $t(K_3,W_1)+t(K_3,W_2)$ derived above completes the proof. 
\end{proof}

\begin{thm}[Goodman~\cite{Goodman59}]
\label{th:K3stronglycommon}
$K_3$ is strongly common.
\end{thm}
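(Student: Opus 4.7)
The plan is to deduce this directly from Goodman's Formula (Theorem~\ref{th:Goodman}), which has just been proved. Setting $W_1 := W$ and $W_2 := 1-W$ in that formula, the inequality to be established, namely
\[t(K_3,W)+t(K_3,1-W)\geq t(K_2,W)^3 + t(K_2,1-W)^3,\]
reduces to showing that
\[\sum_{i=1}^2 \bigl(t(P_3,W_i) - t(K_2,W_i)^2\bigr) \geq 0.\]
So the only thing left to verify is that each summand is individually nonnegative, i.e.\ that $t(P_3, U) \geq t(K_2, U)^2$ for every graphon $U$.

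This last inequality is a standard consequence of the Cauchy--Schwarz inequality (equivalently, it is the statement that $P_3$ is a Sidorenko graph). Concretely, writing $d(y) := \int_0^1 U(x,y)\, dx$, Fubini's theorem gives
\[t(P_3,U) = \int_0^1 d(y)^2\, dy \quad\text{and}\quad t(K_2,U) = \int_0^1 d(y)\, dy,\]
and then Cauchy--Schwarz (or Jensen applied to the convex function $z \mapsto z^2$) yields $\int d^2 \geq (\int d)^2$, as required.

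Combining these two ingredients completes the argument. I do not expect any serious obstacle here: Goodman's Formula already does all of the combinatorial work, reducing the strongly common property of $K_3$ to a one-line application of Cauchy--Schwarz on the ``defect'' terms $t(P_3,W_i) - t(K_2,W_i)^2$. The only thing to be a little careful about is to apply Theorem~\ref{th:Goodman} in its kernel form to the graphons $W$ and $1-W$, which is legitimate since $W + (1-W) = 1$ pointwise.
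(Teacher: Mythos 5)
Your proposal is correct and is essentially the paper's own proof: the paper also deduces the result immediately from Theorem~\ref{th:Goodman} together with the fact that $P_3$ is Sidorenko (i.e.\ $t(P_3,U)\ge t(K_2,U)^2$). The only difference is that you spell out the Cauchy--Schwarz verification of the $P_3$ Sidorenko inequality, which the paper cites as well known.
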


\begin{proof}
The result follows immediately from Theorem~\ref{th:Goodman} and the well-known fact that $P_3$ is Sidorenko. 
\end{proof}

The next theorem provides an analogue of Theorem~\ref{th:Goodman} for the $5$-cycle, from which Theorem~\ref{th:C5} will be derived. 

\begin{thm}
\label{th:C5Goodman}
If $W_1$ and $W_2$ are kernels satisfying $W_1+W_2=1$, then
\[t(C_5,W_1) +t(C_5,W_2)=\sum_{i=1}^2\left[t(K_2,W_i)^5+ 5t(K_2,W_i)t(P_5,W_i)- 5t(K_2,W_i)^2t(P_4,W_i)\right].\]
\end{thm}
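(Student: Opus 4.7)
The plan is to mimic the proof of Theorem~\ref{th:Goodman}. Set $p_i := t(K_2,W_i)$ and $U := W_1 - p_1$, so that $W_2 - p_2 = -U$ (using $W_1+W_2=1$) and $t(K_2,U)=0$ as in \eqref{eq:Uzero}. Applying Lemma~\ref{lem:expansion} to $t(C_5,W_i)$, I would sort edge subsets $E\subseteq E(C_5)$ by the isomorphism type of the subgraph $C_5[E]$. Since $t(K_2,U)=0$, any $E$ whose subgraph contains a component isomorphic to $K_2$ contributes zero, leaving only the empty graph (one subset), $P_3$ (five pairs of consecutive edges), $P_4$ (five triples of consecutive edges), $P_5$ (five four-edge subsets), and $C_5$ itself. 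Using $t(G,-U)=(-1)^{e(G)}t(G,U)$ and summing over $i\in\{1,2\}$, the $C_5$ term cancels (since $(-1)^5=-1$ with coefficient $p_i^0 = 1$), yielding
\begin{equation}\label{eq:C5expansion}
t(C_5,W_1)+t(C_5,W_2) = p_1^5+p_2^5 + 5(p_1^3+p_2^3)\,t(P_3,U) + 5(p_1^2-p_2^2)\,t(P_4,U) + 5\,t(P_5,U).
\end{equation}

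Next I would apply Lemma~\ref{lem:expansion} in the same way to $t(P_4,W_i)$ and $t(P_5,W_i)$; an analogous but smaller enumeration yields
\begin{align*}
t(P_4,W_i) &= p_i^3 + 2 p_i\,t(P_3,U) + \epsilon_i\,t(P_4,U), \\
t(P_5,W_i) &= p_i^4 + 3 p_i^2\,t(P_3,U) + 2\epsilon_i p_i\,t(P_4,U) + t(P_5,U),
\end{align*}
where $\epsilon_1 := 1$ and $\epsilon_2 := -1$ account for the sign flip in $W_2-p_2=-U$. Substituting these into $\sum_{i=1}^2\bigl[p_i^5 + 5 p_i\,t(P_5,W_i) - 5 p_i^2\,t(P_4,W_i)\bigr]$ and collecting: the cross-terms $5p_i\cdot p_i^4$ and $-5p_i^2\cdot p_i^3$ cancel; the coefficient of $t(P_3,U)$ at index $i$ simplifies to $15 p_i^3 - 10 p_i^3 = 5 p_i^3$; the coefficient of $t(P_4,U)$ simplifies to $\epsilon_i(10 p_i^2 - 5 p_i^2) = 5\epsilon_i p_i^2$; and the coefficient of $t(P_5,U)$ is $5 p_i$. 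Summing over $i$ and using $p_1+p_2=1$ and $\epsilon_1 p_1^2+\epsilon_2 p_2^2 = p_1^2 - p_2^2$ reproduces exactly the right-hand side of \eqref{eq:C5expansion}.

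The main obstacle is purely combinatorial bookkeeping: classifying edge subsets of $C_5$, $P_4$, and $P_5$ by the isomorphism type of the induced subgraph and identifying which subsets vanish because they contain an isolated $K_2$-component. Once this classification is in hand, the algebra is routine. Conceptually, the choice of the particular densities $t(P_5,W_i)$ and $t(P_4,W_i)$ on the right-hand side of the identity is precisely what is needed to cancel the $5(p_1^3+p_2^3)\,t(P_3,U)$ and $5(p_1^2-p_2^2)\,t(P_4,U)$ contributions in \eqref{eq:C5expansion}, in direct analogy with how Theorem~\ref{th:Goodman} uses $t(P_3,W_i)$ to absorb the single term $3\,t(P_3,U)$.
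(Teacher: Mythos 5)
Your proposal is correct and follows essentially the same route as the paper: expand via Lemma~\ref{lem:expansion} around $t(K_2,W_i)$, use $t(K_2,U)=0$ to discard every edge subset containing an isolated-edge component, and then expand $t(P_4,W_i)$ and $t(P_5,W_i)$ the same way to match coefficients (the paper expands the products $t(K_2,W_i)t(P_5,W_i)$ and $t(K_2,W_i)^2t(P_4,W_i)$ directly, which is the same bookkeeping). The subgraph counts (one empty set, five $P_3$'s, five $P_4$'s, five $P_5$'s, one $C_5$, with the $C_5$ terms cancelling by parity) and the final coefficient arithmetic all check out.
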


\begin{proof}
Let $q_1:=t(K_2,W_1)$ and $q_2:=t(K_2,W_1)$ and define $U:=q_1-W_1$. As in the proof of Theorem~\ref{th:Goodman}, we have $t(K_2,U)=0$. By Lemma~\ref{lem:expansion}, we get that $t(C_5,W_1)+t(C_5,W_2)$ is
\[
q_1^5 + q_2^5 + 5(q_1^3+q_2^3)t(P_3,U) + 5(q_1^2-q_2^2)t(P_4,U) + 5(q_1+q_2)t(P_5,U).
\]
Applying Lemma~\ref{lem:expansion} to the sum
 $t(K_2,W_1)t(P_5,W_1)+t(K_2,W_2)t(P_5,W_2)$, we get that it equals
\[q_1( q_1^4 + 3q_1^2t(P_3,U) + 2q_1t(P_4,U) + t(P_5,U)) + q_2( q_2^4 + 3q_2^2t(P_3,U) - 2q_2t(P_4,U) + t(P_5,U)) \]
\[ = q_1^5 + q_2^5  + 3(q_1^3 + q_2^3)t(P_3,U) + 2(q_1^2-q_2^2)t(P_4,U)
 + (q_1+q_2)t(P_5,U).\]
Similarly, $t(K_2,W_1)^2t(P_4,W_1) - t(K_2,W_1)^2t(P_4,W_1)$ is equal to 
\[ q_1^2( q_1^3 + 2q_1t(P_3,U) + t(P_4,U)) + q_2^2( q_2^3 + 2q_2t(P_3,U) - t(P_4,U)) \]
\[ = q_1^5 + q_2^5  + 2(q_1^3 + q_2^3)t(P_3,U) + (q_1^2-q_2^2)t(P_4,U).\]
The result follows by substituting these quantities into the expression for $t(C_5,W_1)+t(C_5,W_2)$ derived above and simplifying.
\end{proof}

By Theorem~\ref{th:C5Goodman}, the problem of showing that $C_5$ is strongly common reduces to establishing an inequality for homomorphism densities of paths. For this, we apply the following recent result of Blekherman and Raymond~\cite{BlekhermanRaymond22}.

\begin{lem}[Blekherman and Raymond~{\cite[Theorem~1.3~(1.1)]{BlekhermanRaymond22}}]
\label{lem:Blekherman}
Let $0 \le r \le s \le t$ be integers such that $r$ and $t$ are odd. Then, for every graphon $W$,
\[t(P_r,W)^{t-s} t(P_t,W)^{s-r} \ge t(P_s,W)^{t-r}.\]
\end{lem}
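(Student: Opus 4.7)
The plan is to pass to the spectral theory of the self-adjoint integral operator associated with $W$ and then reduce the stated inequality to a two-point Hölder inequality for moments of a positive measure on the real line. Since $W\colon[0,1]^2\to\mathbb{R}$ is symmetric and bounded, the operator $T_W\colon L^2[0,1]\to L^2[0,1]$ defined by $(T_Wf)(x)=\int_0^1 W(x,y)f(y)\,dy$ is compact and self-adjoint, so the spectral theorem yields an orthonormal basis $\{\phi_j\}$ of $L^2[0,1]$ consisting of eigenfunctions of $T_W$ with real eigenvalues $\lambda_j$. Writing $c_j:=\langle\mathbf{1},\phi_j\rangle$ for the Fourier coefficients of the constant function $\mathbf{1}$, a standard computation gives
\[t(P_k,W)=\langle\mathbf{1},T_W^{k-1}\mathbf{1}\rangle=\sum_j c_j^{2}\lambda_j^{k-1}\]
for every $k\ge1$.

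Next, I would reframe the stated inequality as a moment inequality. Let $\nu:=\sum_j c_j^{2}\delta_{\lambda_j}$, a finite positive Borel measure on $\mathbb{R}$, and set $a:=r-1$, $b:=s-1$, $c:=t-1$, so that $0\le a\le b\le c$ and, since $r$ and $t$ are odd, both $a$ and $c$ are even. With $m_k:=\int x^k\,d\nu=t(P_{k+1},W)$, the goal becomes
\[m_a^{\,c-b}\,m_c^{\,b-a}\;\ge\;m_b^{\,c-a}.\]

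To establish this, I would apply Hölder's inequality in the form $|x|^b=(|x|^a)^{\alpha}(|x|^c)^{1-\alpha}$ with $\alpha:=(c-b)/(c-a)\in[0,1]$ and conjugate exponents $1/\alpha$ and $1/(1-\alpha)$. This yields
\[\int|x|^b\,d\nu\;\le\;\left(\int|x|^a\,d\nu\right)^{\alpha}\left(\int|x|^c\,d\nu\right)^{1-\alpha}.\]
Since $a$ and $c$ are even, $|x|^a=x^a$ and $|x|^c=x^c$, so the right-hand side equals $m_a^{\alpha}m_c^{1-\alpha}$. Combining this with $|m_b|\le\int|x|^b\,d\nu$ and raising to the $(c-a)$-th power gives $|m_b|^{c-a}\le m_a^{\,c-b}m_c^{\,b-a}$. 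Because $c-a$ is also even, $|m_b|^{c-a}=m_b^{\,c-a}$, which is the desired inequality.

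The main conceptual point is that the oddness hypothesis on $r$ and $t$ enters in exactly two places: it makes the outer exponents $a$ and $c$ even, so $\int|x|^a\,d\nu$ and $\int|x|^c\,d\nu$ coincide with the honest (possibly signed) moments $m_a$ and $m_c$; and it makes $c-a$ even, so one can absorb the absolute value around $m_b$. No comparable control is available at the middle index $b$, which is precisely why $s$ is allowed to be of either parity. The only mildly delicate step is the spectral decomposition itself; this is routine since $T_W$ is Hilbert--Schmidt, but if one wishes to avoid operator theory one can first approximate $W$ by step kernels associated with finite symmetric matrices, verify the inequality there by the same Hölder argument applied to eigenvalues, and then pass to the limit using continuity of $W\mapsto t(P_k,W)$.
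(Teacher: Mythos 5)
Your proof is correct. Note, however, that the paper does not prove this lemma at all: it is imported as a black box from Blekherman and Raymond, so there is no internal argument to compare against. What you have written is a valid self-contained derivation, and it is the standard route to such ``log-convexity of odd path densities'' statements: identify $t(P_{k},W)$ with the $(k-1)$-st moment of the spectral measure $\nu=\sum_j c_j^2\delta_{\lambda_j}$ of $T_W$ at the constant function, then apply Lyapunov/H\"older interpolation of moments, using the oddness of $r$ and $t$ exactly where you say --- to replace $\int|x|^a\,d\nu$ and $\int|x|^c\,d\nu$ by the signed moments and to absorb $|m_b|^{c-a}$ into $m_b^{c-a}$. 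Two small points are worth tightening if this were to be written out in full: (i) for $r=1$ you need the eigenbasis to be a complete orthonormal basis of $L^2[0,1]$ (including the kernel of $T_W$ with eigenvalue $0$) so that $m_0=\sum_j c_j^2=\lVert\mathbf{1}\rVert_2^2=t(P_1,W)$, and the termwise identity $\langle\mathbf{1},T_W^{k-1}\mathbf{1}\rangle=\sum_j c_j^2\lambda_j^{k-1}$ should be justified by $L^2$-convergence of $T_W^{k-1}\mathbf{1}=\sum_j\lambda_j^{k-1}c_j\phi_j$; (ii) the H\"older step with exponents $1/\alpha$ and $1/(1-\alpha)$ degenerates when $s=r$ or $s=t$, but the inequality is then trivial, as is the case $r=t$. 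Your closing remark that one could instead work with step kernels and pass to the limit is a legitimate way to sidestep the operator theory, though for a bounded symmetric kernel the Hilbert--Schmidt spectral theorem is already elementary enough.
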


\begin{cor} \label{cor:P6vsP3P4}
For every graphon $W$, 
\[t(P_5,W)\geq t(K_2,W)t(P_4,W).\]
\end{cor}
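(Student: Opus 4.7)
The plan is to deduce Corollary~\ref{cor:P6vsP3P4} directly from Lemma~\ref{lem:Blekherman} combined with the classical fact that paths are Sidorenko (Blakley--Roy).

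First, I would apply Lemma~\ref{lem:Blekherman} with the parameter triple $(r,s,t)=(1,4,5)$. The hypotheses are satisfied because $0\le 1\le 4\le 5$ and both $r=1$ and $t=5$ are odd. Since $P_1$ is a single vertex with no edges, $t(P_1,W)=1$, so the conclusion of the lemma collapses to
\[ t(P_5,W)^{3} \;\ge\; t(P_4,W)^{4}, \]
which is equivalent to $t(P_5,W)\ge t(P_4,W)^{4/3}$.

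Second, since $P_4$ is a (bipartite) path, it is Sidorenko, so $t(P_4,W)\ge t(K_2,W)^{e(P_4)}=t(K_2,W)^{3}$, and in particular $t(P_4,W)^{1/3}\ge t(K_2,W)$. Multiplying this by the previous inequality gives
\[ t(P_5,W) \;\ge\; t(P_4,W)^{4/3} \;=\; t(P_4,W)\cdot t(P_4,W)^{1/3} \;\ge\; t(P_4,W)\cdot t(K_2,W), \]
which is precisely the desired statement.

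There is essentially no obstacle here: the only real choice is the parameter triple fed into Blekherman--Raymond, after which the argument is a one-line combination with the Sidorenko property of $P_4$. For the record, one could instead try $(r,s,t)=(3,4,5)$, which yields $t(P_3,W)\,t(P_5,W)\ge t(P_4,W)^{2}$ and would reduce the corollary to the inequality $t(P_4,W)\ge t(K_2,W)\,t(P_3,W)$; however, that auxiliary inequality is less immediate than the Sidorenko bound for $P_4$, so the route through $(1,4,5)$ is cleaner and avoids any real work.
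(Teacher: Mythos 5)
Your proof is correct and takes essentially the same route as the paper's: both hinge on the $(r,s,t)=(1,4,5)$ instance of Lemma~\ref{lem:Blekherman}, which gives $t(P_5,W)^3\ge t(P_4,W)^4$. The only difference is how the extra factor of $t(K_2,W)$ is extracted: the paper applies the same lemma a second time with $(r,s,t)=(1,2,5)$ (which is just Blakley--Roy for $P_5$) and multiplies the two inequalities before taking a fourth root, whereas you invoke Blakley--Roy for $P_4$ and chain the bounds; both steps are immediate and valid.
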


\begin{proof}
Let $W$ be a graphon. By Lemma~\ref{lem:Blekherman},
\begin{align*}
t(P_1,W)^{3} t(P_5,W) &\ge t(P_2,W)^{4} \quad \text{and}\\
t(P_1,W) t(P_5,W)^{3} &\ge t(P_4,W)^{4}.
\end{align*}
The result follows by multiplying these two inequalities and taking the fourth root. 
\end{proof}

We now prove Theorem~\ref{th:C5}, which we restate here for convenience. 

\repeatThm{C5}

\begin{proof}
This is an immediate consequence of Theorem~\ref{th:C5Goodman} and Corollary~\ref{cor:P6vsP3P4}.
\end{proof}

As mentioned in the introduction,  Kim and Lee~\cite{KimLee22+} recently proved that all odd cycles are strongly common. In fact, they obtained the following stronger result which applies to all kernels, not just graphons. We apply this result in Section~\ref{sec:oddCycles}.

\begin{thm}[Kim and Lee~\cite{KimLee22+}]
\label{th:OddCycles}
For every kernel $W$ and odd $m\geq3$,
\[t(C_m,W) + t(C_m,1-W)\geq t(K_2,W)^m+t(K_2,1-W)^m.\]
In particular, all odd cycles are strongly common.
\end{thm}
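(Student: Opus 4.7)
The natural plan is to generalise the approach used for $C_3$ and $C_5$ in Theorems~\ref{th:Goodman} and~\ref{th:C5Goodman}: combine the algebraic expansion technique of Lemma~\ref{lem:expansion} with inequalities for path densities. Write $W_1 := W$, $W_2 := 1 - W$, $q_i := t(K_2, W_i)$, and $U := W_1 - q_1 = q_2 - W_2$, so that $t(K_2, U) = 0$. Applying Lemma~\ref{lem:expansion} to $t(C_m, W_1)$ (with shift $p = q_1$) and to $t(C_m, W_2)$ (with shift $p = q_2$, noting that $W_2 - q_2 = -U$ and hence the expansion picks up a sign $(-1)^{|E|}$) and summing yields
\[t(C_m, W_1) + t(C_m, W_2) = \sum_{E \subseteq E(C_m)} \bigl[q_1^{m - |E|} + (-1)^{|E|} q_2^{m - |E|}\bigr]\, t(C_m[E], U).\]

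Several of these terms simplify. The $|E|=0$ contribution is exactly $q_1^m + q_2^m$, matching the right-hand side of the target inequality. The $|E|=1$ terms vanish because $t(K_2, U)=0$. The $|E|=m$ term vanishes because $1+(-1)^m = 0$ for odd $m$. For $2 \leq |E| \leq m-1$, the graph $C_m[E]$ is a disjoint union of proper sub-paths of $C_m$ together with isolated vertices, so $t(C_m[E], U)$ factors as a product of $t(P_k, U)$ with $k \geq 2$. The second step is then to invert the expansion and rewrite each $t(P_k, U)$ in terms of the path densities $t(P_j, W_i)$, arriving at an identity of the form
\[t(C_m, W_1) + t(C_m, W_2) - q_1^m - q_2^m \;=\; \Psi_m\!\bigl(\{t(P_k, W_i)\}_{k, i}\bigr),\]
for an explicit polynomial $\Psi_m$ whose coefficients can be obtained by enumerating subsets $E \subseteq E(C_m)$ according to the multiset of path-lengths of $C_m[E]$. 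The theorem then reduces to proving $\Psi_m \geq 0$ for every kernel $W$.

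The main obstacle is precisely this last step. For $m=3$, $\Psi_3$ is a positive multiple of the Sidorenko inequality $t(P_3, W) \geq t(K_2, W)^2$, which is immediate from Cauchy--Schwarz and holds for every kernel. For $m=5$, $\Psi_5$ is handled by two applications of Lemma~\ref{lem:Blekherman} through Corollary~\ref{cor:P6vsP3P4}. For general odd $m$, however, the number of contributing partitions grows quickly and there is no obvious single path inequality of sufficient strength; moreover, Lemma~\ref{lem:Blekherman} is stated for graphons, whereas the present theorem quantifies over all kernels, so a graphon-only approach would require additional adaptation (perhaps via spectral bounds on the integral operator associated with $W$). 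For these reasons, I would expect a direct path-inequality approach to demand substantial new ideas, and this is presumably why Kim and Lee bypass the path-inequality route altogether in favour of a Schur convexity argument, which handles all odd $m$ uniformly.
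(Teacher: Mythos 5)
There is a genuine gap here, and you have essentially named it yourself: the entire content of the theorem is concentrated in the step you leave open, namely the positivity of $\Psi_m$. The expansion bookkeeping you carry out (the $|E|=0$ term giving $q_1^m+q_2^m$, the $|E|=1$ terms vanishing by $t(K_2,U)=0$, the $|E|=m$ term cancelling by parity) is correct but is exactly the easy part; it is the same computation that produces Theorems~\ref{th:Goodman} and~\ref{th:C5Goodman}. What remains is a signed sum of products $\prod_j t(P_{k_j},U)$ over all proper subsets $E$ with $2\le|E|\le m-1$, and these terms are genuinely of indefinite sign: $t(P_k,U)\ge0$ for a kernel $U$ only when $k$ is odd (an even number of edges), while $t(P_k,U)$ for even $k$ can be negative, and the number of contributing partitions grows with $m$. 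So the ``explicit polynomial $\Psi_m$'' is not shown to be non-negative for any $m\ge7$, and without that the argument proves nothing. A proposal that ends with ``this step would demand substantial new ideas'' is an accurate assessment of the difficulty, but it is not a proof.

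For comparison: the paper does not prove Theorem~\ref{th:OddCycles} at all. It is imported verbatim from Kim and Lee~\cite{KimLee22+}, whose argument is based on Schur convexity and is entirely different from the expansion-plus-path-inequalities route; the paper only carries out your proposed strategy in the two cases $m=3$ (Theorem~\ref{th:Goodman}, where $\Psi_3$ reduces to $3t(P_3,U)\ge0$) and $m=5$ (Theorem~\ref{th:C5Goodman} combined with Corollary~\ref{cor:P6vsP3P4}). Your observation that Lemma~\ref{lem:Blekherman} is stated only for graphons, whereas Theorem~\ref{th:OddCycles} quantifies over all kernels, is also a real obstruction to this route even for $m=5$: the paper's $C_5$ argument yields only the graphon statement (Theorem~\ref{th:C5}), not the kernel-level inequality asserted here. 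So the correct conclusion is that your approach, as written, establishes at most the cases already in the paper and does not prove the stated theorem.
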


\section{Entropy and Convexity}
\label{sec:entropy}

The following lemma is is useful in applying Theorem~\ref{th:OddCycles} to obtain examples of $(p_1,p_2)$-common pairs of graphs. The proof is a somewhat technical calculus exercise which we have chosen to put into the appendix. 

\newLem{convexity}{
Let $F$ be strongly common and let $H_1$ and $H_2$ be non-empty graphs. If $k_1,k_2,\ell_1,\ell_2$ are non-negative integers, $p_1,p_2\in (0,1)$ such that $p_1+p_2=1$ and conditions \eqref{eq:H_ge_F}--\eqref{eq:thecorrelation} below are satisfied, then $(H_1,H_2)$ is $(p_1,p_2)$-common.
\begin{enumerate}
\stepcounter{equation}
    \item\label{eq:H_ge_F} $e(H_i) \ge e(F)$  for $i\in\{1,2\}$,
\stepcounter{equation}
    \item \label{eq:ehef} $e(H_i)=k_ie(F)-\ell_i$ for $i\in \{1,2\}$,
\stepcounter{equation}
    \item\label{eq:p1p2} $\frac{k_1}{e(H_1)p_1^{e(F)-1}}= \frac{k_2}{e(H_2)p_2^{e(F)-1}}$,
\stepcounter{equation}
    \item\label{eq:thecorrelation} $t(H_i,W)t(K_2,W)^{\ell_i}\geq t(F,W)^{k_i}$ for every graphon $W$ and $i\in\{1,2\}$.
\end{enumerate}
}

Applying Lemma~\ref{lem:convexity} to the case $H_1=H_2$ yields the following corollary.

\begin{cor}
\label{cor:convexity}
If $F$ is strongly common, $H$ is a non-empty graph and $k,\ell$ are non-negative integers such that conditions \eqref{eq:H_ge_Fcor}--\eqref{eq:thecorrelationcor} below are satisfied, then $H$ is common.
\begin{enumerate}
\stepcounter{equation}
    \item\label{eq:H_ge_Fcor} $e(H) \ge e(F)$,
\stepcounter{equation}
    \item \label{eq:ehefcor} $e(H)=ke(F)-\ell$,
\stepcounter{equation}
    \item\label{eq:thecorrelationcor} $t(H,W)t(K_2,W)^{\ell}\geq t(F,W)^{k}$ for every graphon $W$.
\end{enumerate}
\end{cor}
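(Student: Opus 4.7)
The plan is to derive the corollary as an immediate specialization of Lemma~\ref{lem:convexity} by setting $H_1=H_2=H$, $k_1=k_2=k$, $\ell_1=\ell_2=\ell$, and $p_1=p_2=1/2$, and then invoking the already-noted equivalence between $H$ being common and $(H,H)$ being $(1/2,1/2)$-common.

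Concretely, I would first check the four hypotheses of Lemma~\ref{lem:convexity} under this specialization. Hypothesis \eqref{eq:H_ge_F} is immediate from \eqref{eq:H_ge_Fcor}. Hypothesis \eqref{eq:ehef} is immediate from \eqref{eq:ehefcor}. Hypothesis \eqref{eq:thecorrelation} is exactly \eqref{eq:thecorrelationcor}. The only thing worth checking is \eqref{eq:p1p2}, but with $p_1=p_2=1/2$ and the two sides identical after the specialization, it reduces to the trivial equality $\frac{k}{e(H)(1/2)^{e(F)-1}}=\frac{k}{e(H)(1/2)^{e(F)-1}}$.

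Having verified the hypotheses, Lemma~\ref{lem:convexity} yields that $(H,H)$ is $(1/2,1/2)$-common, i.e., for every graphon $W$, applying Definition~\ref{defn:commonPair} with $W_1=W$ and $W_2=1-W$,
\[\frac{t(H,W)}{e(H)(1/2)^{e(H)-1}}+\frac{t(H,1-W)}{e(H)(1/2)^{e(H)-1}}\geq \frac{1/2}{e(H)}+\frac{1/2}{e(H)}=\frac{1}{e(H)}.\]
Multiplying through by $e(H)(1/2)^{e(H)-1}$ gives $t(H,W)+t(H,1-W)\geq (1/2)^{e(H)-1}$, which is precisely the graphon formulation of the common property for $H$ stated in the introduction.

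Since the argument is a direct specialization, I expect no real obstacle; the only point that requires any thought is recognizing that condition \eqref{eq:p1p2} becomes vacuous in the symmetric case, and that the factor $e(H)(1/2)^{e(H)-1}$ in Definition~\ref{defn:commonPair} is exactly what is needed to recover the constant $(1/2)^{e(H)-1}$ on the right-hand side of the common inequality.
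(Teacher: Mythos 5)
Your proposal is correct and matches the paper exactly: the paper obtains the corollary precisely by specializing Lemma~\ref{lem:convexity} to $H_1=H_2=H$, $k_1=k_2=k$, $\ell_1=\ell_2=\ell$, $p_1=p_2=1/2$, with condition \eqref{eq:p1p2} becoming vacuous, and then using the equivalence between $H$ being common and $(H,H)$ being $(1/2,1/2)$-common noted after Definition~\ref{defn:commonPair}. Your unwinding of the definition to recover $t(H,W)+t(H,1-W)\geq (1/2)^{e(H)-1}$ is a correct and slightly more explicit version of what the paper leaves implicit.
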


In the rest of this section, we review some basic properties of the entropy of a random variable that will be used in the next two sections. Given a discrete random variable $X$, the \emph{range} of $X$, denoted $\rng(X)$, is the set of all $x$ such that $\mathbb{P}(X=x)>0$.

\begin{defn}
Let $X$ be a discrete random variable. The \emph{entropy} of $X$ is
\[\mathbb{H}(X) := \sum_{x\in \rng(X)}\mathbb{P}(X=x)\log_2\left(\frac{1}{\mathbb{P}(X=x)}\right).\] 
\end{defn}

The next lemma follows by applying Jensen's Inquality to the convex function $x\log_2(x)$.

\begin{lem}
\label{lem:maxEntropy}
If $X$ is a discrete random variable, then
\[\mathbb{H}(X) \leq \log_2(|\rng(X)|).\]
Moreover, $\mathbb{H}(X)=\log_2(|\rng(X)|)$ if and only if  $X$ is uniformly distributed on its range. 
\end{lem}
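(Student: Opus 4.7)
The plan is to apply Jensen's inequality to the strictly convex function $\phi(x):=x\log_2(x)$ on $(0,\infty)$, exactly as suggested in the statement of the lemma. Let $n:=|\rng(X)|$ and write $p_x:=\mathbb{P}(X=x)$ for $x\in \rng(X)$, so that $p_x>0$ and $\sum_{x\in\rng(X)}p_x=1$. First I would note that $\phi$ is convex on $(0,\infty)$, indeed \emph{strictly} convex (its second derivative $1/(x\ln 2)$ is positive), so Jensen's inequality applied with the uniform probability measure on the finite set $\rng(X)$ yields
\[\frac{1}{n}\sum_{x\in\rng(X)}\phi(p_x)\ \geq\ \phi\!\left(\frac{1}{n}\sum_{x\in\rng(X)}p_x\right)\ =\ \phi(1/n)\ =\ -\frac{\log_2 n}{n}.\]
Multiplying through by $n$ and negating then produces $\mathbb{H}(X)=-\sum_{x}p_x\log_2(p_x)\leq \log_2 n$, which is the desired inequality.

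For the ``moreover'' part, the key step is to use strict convexity of $\phi$: equality in Jensen's inequality holds if and only if $p_x$ takes a common value as $x$ ranges over $\rng(X)$, and since these values sum to $1$, this forces $p_x=1/n$ for every $x\in \rng(X)$. This is precisely the assertion that $X$ is uniformly distributed on its range, and the reverse implication follows by direct substitution into the definition of $\mathbb{H}(X)$.

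I do not anticipate any real obstacle in this argument, since it is a textbook application of Jensen's inequality once the hint about $x\log_2(x)$ has been taken. The only subtle point is to invoke \emph{strict} convexity (rather than mere convexity) of $\phi$ in order to pin down the equality case; everything else is routine.
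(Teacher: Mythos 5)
Your proof is correct and follows exactly the route the paper indicates (the paper gives no written proof beyond the remark that the lemma follows from Jensen's inequality applied to the convex function $x\log_2(x)$, which is precisely what you carry out). The computation and the use of strict convexity for the equality case are both accurate.
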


For discrete random variables $X$ and $Y$ and $y\in\rng(Y)$, let $\rng(X\mid Y=y)$ be the set of all $x$ such that $\mathbb{P}(X=x\mid Y=y)>0$. 

\begin{defn}
Let $X$ and $Y$ be discrete random variables. For $y\in \rng(Y)$, define 
\[\mathbb{H}(X\mid Y=y):= \sum_{x\in \rng(X\mid Y=y)}\mathbb{P}(X=x\mid Y=y)\log_2\left(\frac{1}{\mathbb{P}(X=x\mid Y=y)}\right).\] 
\end{defn}

\begin{defn}
\label{defn:conditionalEntropy}
For two discrete random variables $X$ and $Y$, the \emph{conditional entropy of $X$ given $Y$} is defined by
\[\mathbb{H}(X\mid Y) := \sum_{y\in \rng(Y)}\mathbb{P}(Y=y)\mathbb{H}(X\mid Y=y).\]
\end{defn}

The next lemma, known as the ``chain rule'' for entropy, is very useful for analyzing the entropy of a tuple of random variables.

\begin{lem}[Chain Rule]
\label{lem:chainRule}
For $m\geq1$ and $k\geq0$ and discrete random variables  $X_1,X_2,\dots,X_m$ and $Y_1,Y_2,\dots,Y_k$, we have
\[\mathbb{H}(X_1,\dots,X_m\mid Y_1,\dots,Y_k) = \sum_{i=1}^m\mathbb{H}(X_i\mid Y_1,\dots,Y_k,X_1,\dots,X_{i-1}).\]
\end{lem}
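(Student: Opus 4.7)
The plan is to prove the chain rule by induction on $m$, with the inductive step reducing to the ``two-block'' identity
\[\mathbb{H}(A,B\mid Z) = \mathbb{H}(A\mid Z) + \mathbb{H}(B\mid Z,A),\]
valid for any discrete random variables $A$, $B$ and tuple $Z=(Y_1,\dots,Y_k)$. The base case $m=1$ is just the definition of conditional entropy, so the real content is this two-block identity together with a clean induction.

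To prove the two-block identity, I would start from the right-hand side and unfold the definitions. Writing $p(a,b,z):=\mathbb{P}(A=a,B=b,Z=z)$ and similarly for marginals and conditionals, one has
\[\mathbb{H}(A\mid Z) = -\sum_{a,z} p(a,z)\log_2 p(a\mid z),\qquad \mathbb{H}(B\mid Z,A) = -\sum_{a,b,z} p(a,b,z)\log_2 p(b\mid z,a),\]
where sums range over the appropriate ranges. Rewriting $p(a,z)=\sum_b p(a,b,z)$ in the first sum, combining it with the second, and using the factorization $p(a\mid z)\, p(b\mid z,a) = p(a,b\mid z)$ together with $\log_2(xy)=\log_2 x+\log_2 y$, the two sums collapse into $-\sum_{a,b,z} p(a,b,z)\log_2 p(a,b\mid z)$, which by Definition~\ref{defn:conditionalEntropy} equals $\mathbb{H}(A,B\mid Z)$. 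A minor bookkeeping point is that one must restrict the sums to triples with $p(a,b,z)>0$ so that all conditional probabilities are well defined; this is automatic from the convention $\rng$ in the definitions above, and terms with $p(a,b,z)=0$ contribute zero under the standard convention $0\log_2 0 = 0$.

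For the inductive step, assume the statement holds for $m-1$. Apply the two-block identity with $A=(X_1,\dots,X_{m-1})$, $B=X_m$ and $Z=(Y_1,\dots,Y_k)$ to obtain
\[\mathbb{H}(X_1,\dots,X_m\mid Y_1,\dots,Y_k) = \mathbb{H}(X_1,\dots,X_{m-1}\mid Y_1,\dots,Y_k) + \mathbb{H}(X_m\mid Y_1,\dots,Y_k,X_1,\dots,X_{m-1}).\]
By the inductive hypothesis, the first term on the right is $\sum_{i=1}^{m-1}\mathbb{H}(X_i\mid Y_1,\dots,Y_k,X_1,\dots,X_{i-1})$, and adding the second term gives exactly the claimed sum up to $i=m$.

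The only ``obstacle'' is purely notational: one must be careful that, in the two-block identity, the random variables $A$ and $B$ may themselves be tuples, and that conditioning on a tuple is interpreted as conditioning on the joint event. Since the definitions of $\mathbb{H}(\cdot\mid\cdot)$ given in the paper already treat the conditioning variable abstractly as a single discrete random variable (possibly vector-valued), this causes no difficulty. No probabilistic inequality is needed—this is a pure algebraic manipulation of the definitions.
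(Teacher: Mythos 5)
The paper states this chain rule as a standard fact and gives no proof, so there is nothing to compare against; your argument --- the two-block identity $\mathbb{H}(A,B\mid Z)=\mathbb{H}(A\mid Z)+\mathbb{H}(B\mid Z,A)$ proved by expanding the definitions and factoring $p(a,b\mid z)=p(a\mid z)p(b\mid z,a)$, followed by induction on $m$ --- is the standard derivation and is correct. The only cosmetic point is the case $k=0$, where one should note the convention that conditioning on the empty tuple means unconditional entropy, after which the base case and the two-block identity go through unchanged.
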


Another standard fact is that conditioning on a larger set of random variables can only decrease conditional entropy. 

\begin{defn}
For discrete random variables $X,Y$ and $Z$, say that $X$ and $Y$ are \emph{conditionally independent given $Z$} if the following hold for every $z\in \rng(Z)$, $x\in \rng(X\mid Z=z)$ and $y\in \rng(Y\mid Z=z)$:
\[\mathbb{P}(X=x\mid Z=z,Y=y)=\mathbb{P}(X=x\mid Z=z)\text{ and}\]
\[\mathbb{P}(Y=y\mid Z=z, X=x)=\mathbb{P}(Y=y\mid Z=z).\]
\end{defn}

\begin{lem}[Deconditioning]
\label{lem:deconditioning}
For $m,k\geq1$ and discrete random variables  $X_1,X_2,\dots,X_m$ and $Y_1,Y_2,\dots,Y_k$, we have
\[\mathbb{H}(X_1,\dots,X_m\mid Y_1,\dots, Y_k)\leq \mathbb{H}(X_1,\dots,X_m\mid Y_1,\dots, Y_{k-1}).\]
Moreover, equality holds if and only if $(X_1,\dots,X_m)$ and $Y_k$ are conditionally independent given $(Y_1,\dots,Y_{k-1})$. 
\end{lem}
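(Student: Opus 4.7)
The plan is to reduce the inequality to a single application of Jensen's inequality for the strictly concave function $\phi(t) := -t\log_2(t)$ (with $\phi(0) := 0$), applied to the law-of-total-probability expansion that writes the conditional distribution of $(X_1,\dots,X_m)$ given $(Y_1,\dots,Y_{k-1})$ as the average over $Y_k$ of the conditional distribution of $(X_1,\dots,X_m)$ given $(Y_1,\dots,Y_k)$. The equality clause will then fall out of the strict concavity of $\phi$, combined with Bayes' rule to promote the one-sided conditional independence identity to the symmetric form demanded by the definition.

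First I would abbreviate $X := (X_1,\dots,X_m)$ and $Z := (Y_1,\dots,Y_{k-1})$, reducing the claim to $\mathbb{H}(X\mid Z, Y_k) \le \mathbb{H}(X\mid Z)$, with equality iff $X$ and $Y_k$ are conditionally independent given $Z$. Fix $z \in \rng(Z)$ and, for $y \in \rng(Y_k \mid Z=z)$, set $w_y := \mathbb{P}(Y_k=y\mid Z=z)$ and $p_{x,y} := \mathbb{P}(X=x\mid Z=z, Y_k=y)$, so that the law of total probability gives
\[\mathbb{P}(X=x\mid Z=z) \;=\; \sum_{y} w_y\, p_{x,y}.\]
Applying Jensen's inequality to $\phi$ for each $x$ and summing over $x \in \rng(X\mid Z=z)$ yields
\[\mathbb{H}(X\mid Z=z) \;\ge\; \sum_{y} w_y \sum_{x} \phi(p_{x,y}) \;=\; \sum_{y} w_y\, \mathbb{H}(X\mid Z=z, Y_k=y).\]
Multiplying by $\mathbb{P}(Z=z)$, summing over $z \in \rng(Z)$, and recognising the right-hand side via Definition~\ref{defn:conditionalEntropy} delivers the required inequality.

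For the equality case, strict concavity of $\phi$ on $[0,1]$ forces each $p_{x,y}$ to be constant in $y$ (over $\{y : w_y > 0\}$) for every fixed $x$ and $z$, and hence equal to the convex combination $\mathbb{P}(X=x\mid Z=z)$; this is exactly the first identity in the definition of conditional independence. Bayes' rule then gives
\[\mathbb{P}(Y_k=y\mid Z=z, X=x) \;=\; \frac{p_{x,y}\, w_y}{\mathbb{P}(X=x\mid Z=z)} \;=\; w_y \;=\; \mathbb{P}(Y_k=y\mid Z=z),\]
supplying the symmetric identity; conversely, assuming conditional independence makes every Jensen step an equality, so the inequality becomes an equation. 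I expect the only genuine subtlety to be bookkeeping with ranges: I need to confirm that $\rng(X\mid Z=z, Y_k=y) \subseteq \rng(X\mid Z=z)$ so the inner sums are over a common index set, and to invoke the convention $\phi(0)=0$ to absorb indices $x$ that lie in the former range for some values of $y$ but not others, without creating spurious strictness in the inequality.
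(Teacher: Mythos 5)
Your proof is correct. The paper states Lemma~\ref{lem:deconditioning} as a standard fact without supplying a proof, so there is nothing to compare against; your Jensen's-inequality argument for the concave function $\phi(t)=-t\log_2 t$, together with the strict-concavity analysis of the equality case and the Bayes step to recover the symmetric half of the paper's definition of conditional independence, is the canonical proof and handles the range/convention issues appropriately.
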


We conclude this section with a lemma which is useful for constructing high entropy distributions with specific marginal distributions. The ``consequently'' part of this lemma uses Lemma~\ref{lem:chainRule} and the ``moreover'' part of Lemma~\ref{lem:deconditioning}. 

\begin{lem}[See~{\cite[Lemma~2.5]{Lee21}}]
\label{lem:glueEntropy}
Let $A_1,A_2,A_2'$ and $A_3$ be discrete random variables. If $A_2$ and $A_2'$ are identically distributed, then there exist $B_1, B_2$ and $B_3$ such that $B_1$ and $B_3$ are conditionally independent given $B_2$, $(B_1,B_2)$ and $(A_1,A_2)$ are identically distributed and $(B_2,B_3)$ and $(A_2',A_3)$ are identically distributed. Consequently,
\begin{align*}\mathbb{H}(B_1,B_2,B_3)&= \mathbb{H}(A_1,A_2)+\mathbb{H}(A_3\mid A_2')\\
&= \mathbb{H}(A_1,A_2)+\mathbb{H}(A_2',A_3) - \mathbb{H}(A_2).\end{align*}
\end{lem}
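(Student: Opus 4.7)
The plan is to construct the joint distribution of $(B_1,B_2,B_3)$ explicitly from the given joint distributions of $(A_1,A_2)$ and $(A_2',A_3)$, and then verify the required properties directly. Since $A_2$ and $A_2'$ share a common distribution, I can \emph{glue} the two joint distributions along this common marginal. Specifically, for every triple $(b_1,b_2,b_3)$ with $\mathbb{P}(A_2=b_2)=\mathbb{P}(A_2'=b_2)>0$, define
\[\mathbb{P}(B_1=b_1,B_2=b_2,B_3=b_3) := \mathbb{P}(A_1=b_1,A_2=b_2)\cdot \mathbb{P}(A_3=b_3\mid A_2'=b_2),\]
and set this probability to zero otherwise. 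Non-negativity is immediate, and summing over all triples using the law of total probability (first over $b_3$, then over $b_1$, then over $b_2$) shows that this defines a valid probability distribution.

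Next I would verify the two marginal conditions. Summing the displayed expression over $b_3$ produces $\mathbb{P}(B_1=b_1,B_2=b_2)=\mathbb{P}(A_1=b_1,A_2=b_2)$, since the conditional probabilities $\mathbb{P}(A_3=b_3\mid A_2'=b_2)$ sum to one. Summing over $b_1$ and then using the hypothesis that $A_2$ and $A_2'$ are identically distributed gives
\[\mathbb{P}(B_2=b_2,B_3=b_3)=\mathbb{P}(A_2=b_2)\cdot\mathbb{P}(A_3=b_3\mid A_2'=b_2)=\mathbb{P}(A_2'=b_2,A_3=b_3).\]
Conditional independence of $B_1$ and $B_3$ given $B_2$ follows directly from the definition, which factors as a product of a function of $(b_1,b_2)$ and a function of $(b_2,b_3)$: conditioning on $B_2=b_2$ yields $\mathbb{P}(B_1=b_1\mid B_2=b_2)=\mathbb{P}(A_1=b_1\mid A_2=b_2)$ and $\mathbb{P}(B_3=b_3\mid B_2=b_2)=\mathbb{P}(A_3=b_3\mid A_2'=b_2)$, and the joint conditional probability is their product.

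For the entropy identity, I would invoke the chain rule from Lemma~\ref{lem:chainRule} to write $\mathbb{H}(B_1,B_2,B_3)=\mathbb{H}(B_1,B_2)+\mathbb{H}(B_3\mid B_1,B_2)$. Conditional independence combined with the equality case of Lemma~\ref{lem:deconditioning} gives $\mathbb{H}(B_3\mid B_1,B_2)=\mathbb{H}(B_3\mid B_2)$. The marginal identities then yield $\mathbb{H}(B_1,B_2)=\mathbb{H}(A_1,A_2)$ and $\mathbb{H}(B_3\mid B_2)=\mathbb{H}(A_3\mid A_2')$, giving the first claimed equality. The second follows by applying the chain rule in the form $\mathbb{H}(A_3\mid A_2')=\mathbb{H}(A_2',A_3)-\mathbb{H}(A_2')$ and observing that $\mathbb{H}(A_2')=\mathbb{H}(A_2)$ because $A_2$ and $A_2'$ are identically distributed.

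The construction is essentially forced by the requirements, so there is no serious obstacle; the only mild point of care is restricting attention to values $b_2$ in the common support of $A_2$ and $A_2'$ when conditioning. Everything else is routine bookkeeping of marginals and entropies using the chain rule and the equality condition in the deconditioning lemma.
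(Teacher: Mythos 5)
Your construction is correct and complete: the product formula $\mathbb{P}(B_1=b_1,B_2=b_2,B_3=b_3)=\mathbb{P}(A_1=b_1,A_2=b_2)\,\mathbb{P}(A_3=b_3\mid A_2'=b_2)$ is exactly the standard gluing along the common marginal, and your verification of the marginals, the conditional independence, and the entropy identity via the chain rule and the equality case of the deconditioning lemma is sound. The paper itself does not prove this lemma (it cites Lee's Lemma~2.5 and only remarks that the ``consequently'' part follows from Lemmas~\ref{lem:chainRule} and~\ref{lem:deconditioning}, which is precisely how you derive it), so your write-up simply supplies the omitted argument by the intended route.
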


\section{A Worked Example}
\label{sec:example}

The goal of this section is to use the ideas built up in the previous section to show that the first graph depicted in Figure~\ref{fig:glueExamples} is common. This will provide a specific example to illustrate the approach before diving into the details of the (more abstract) general case.

For two graphs $J$ and $G$, let $\Hom(J,G)$ be the set of homomorphisms from $J$ to $G$ and $\hom(J,G):=|\Hom(J,G)|$. We say that a random variable $(X_v: v\in V(J))$ is \emph{$G$-homomorphism supported} if, for almost every choice of $(X_v: v\in V(J))$, there exists $f\in \Hom(J,G)$  such that $X_v=f(v)$ for all $v\in V(J)$.  We start by establishing the following proposition.

\begin{prop}
\label{prop:H1}
Let $H$ be the first graph depicted in Figure~\ref{fig:glueExamples}. The graph $J=H\sqcup K_2$ satisfies $\hom(J,G)\geq \hom(C_5,G)^2$ for every graph $G$.
\end{prop}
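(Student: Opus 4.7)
The inequality to prove rewrites, after taking $\log_2$ of both sides and using $\hom(J,G)=\hom(H,G)\cdot\hom(K_2,G)$, as
\[\log_2\hom(H,G)+\log_2\hom(K_2,G)\geq 2\log_2\hom(C_5,G).\]
My plan is to exhibit an $H$-homomorphism supported tuple $Y$ of entropy at least $2\log_2\hom(C_5,G)-\log_2\hom(K_2,G)$, and then apply the max-entropy bound of Lemma~\ref{lem:maxEntropy}. The first graph in Figure~\ref{fig:glueExamples} is naturally built by identifying an edge of one $C_5$ with an edge of another, so I would label its vertex set $\{y_1,y_2,y_3,y_4,y_5,y_3',y_4',y_5'\}$ so that $y_1y_2y_3y_4y_5y_1$ and $y_1y_2y_3'y_4'y_5'y_1$ are the two five-cycles and $y_1y_2$ is the shared edge.

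The first step is to let $X=(X_1,\ldots,X_5)$ be uniformly distributed on $\Hom(C_5,G)$, with $C_5$ cyclically ordered as $x_1x_2x_3x_4x_5x_1$; by Lemma~\ref{lem:maxEntropy}, $\mathbb{H}(X)=\log_2\hom(C_5,G)$. Taking an independent copy $X'=(X_1',\ldots,X_5')$ of $X$, I would apply Lemma~\ref{lem:glueEntropy} with $A_1=(X_3,X_4,X_5)$, $A_2=(X_1,X_2)$, $A_2'=(X_1',X_2')$, and $A_3=(X_3',X_4',X_5')$. Since $A_2$ and $A_2'$ are identically distributed, the lemma produces $(B_1,B_2,B_3)$ for which $(B_1,B_2)$ and $(B_2,B_3)$ are each distributed as $X$. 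Setting $Y:=(Y_1,Y_2,Y_3,Y_4,Y_5,Y_3',Y_4',Y_5')$ with $(Y_1,Y_2)=B_2$, $(Y_3,Y_4,Y_5)=B_1$, and $(Y_3',Y_4',Y_5')=B_3$, each of $(Y_1,\ldots,Y_5)$ and $(Y_1,Y_2,Y_3',Y_4',Y_5')$ has the same distribution as $X$, so almost surely traces a homomorphic copy of $C_5$ in $G$; thus $Y$ is $H$-homomorphism supported. The ``consequently'' clause of Lemma~\ref{lem:glueEntropy} then gives
\[\mathbb{H}(Y)=2\mathbb{H}(X)-\mathbb{H}(X_1,X_2)=2\log_2\hom(C_5,G)-\mathbb{H}(X_1,X_2).\]

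To conclude, I would observe that $(X_1,X_2)$ is almost surely an edge of $G$ (since $x_1x_2\in E(C_5)$), and hence by Lemma~\ref{lem:maxEntropy}, $\mathbb{H}(X_1,X_2)\leq\log_2\hom(K_2,G)$. Combining this with $\mathbb{H}(Y)\leq\log_2\hom(H,G)$ (again Lemma~\ref{lem:maxEntropy}, using that $Y$ is $H$-homomorphism supported) yields the reformulated inequality. I do not foresee a real obstacle: the proof follows the standard entropy/gluing template for this class of inequalities, and the only point that requires careful verification is that the glued variable $Y$ indeed lands in $\Hom(H,G)$ with probability one, which is immediate once both $C_5$-blocks of $Y$ are observed to inherit the distribution of $X$.
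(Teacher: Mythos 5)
There is a genuine gap: you have misidentified the graph $H$. The first graph in Figure~\ref{fig:glueExamples} is \emph{not} two $5$-cycles sharing an edge (that graph is a simple $C_5$-edge-tree, already known to be common from~\cite{GrzesikLeeLidickyVolec22}, so it would not be offered as a new example). As the paper's proof makes explicit, $J=H\sqcup K_2$ has a $5$-cycle on vertices $1,\dots,5$, a $4$-cycle on vertices $5,6,7,8$, and a $K_2$ on $9,10$; that is, $H$ is a $C_5$ and a $C_4$ identified at a single vertex. Your construction cannot produce a tuple supported on $\Hom(J,G)$ for this $J$: the $C_4$ block is not a copy of $C_5$, so one cannot obtain it by gluing two whole uniformly random $C_5$-homomorphisms along an edge. (Your argument is a correct proof of the analogous inequality for the edge-glued double $C_5$, but that is a different statement.)

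The correct graph forces a genuinely more delicate argument, which is the actual content of the proposition. One realizes the $C_4$ on $5,6,7,8$ by two applications of Lemma~\ref{lem:glueEntropy}: first glue a copy of the path $(X_5,X_1,X_2)$ at the single vertex $5$, then glue another copy of $(X_5,X_1,X_2)$ at the \emph{pair} $(5,7)$ to close the $4$-cycle, and finally append an independent copy of $(X_1,X_2)$ for the $K_2$. The resulting entropy is
\[\mathbb{H}(X_1,\dots,X_5)+\mathbb{H}(X_1,X_2\mid X_5)+\mathbb{H}(X_1\mid X_2,X_5)+\mathbb{H}(X_1,X_2),\]
which is not literally $2\mathbb{H}(X_1,\dots,X_5)$; to reach that lower bound one must add conditioning via Lemma~\ref{lem:deconditioning} (replacing $\mathbb{H}(X_1,X_2\mid X_5)$ by $\mathbb{H}(X_1,X_2\mid X_4,X_5)$ and $\mathbb{H}(X_1\mid X_2,X_5)$ by $\mathbb{H}(X_1\mid X_2,X_3,X_4,X_5)$) and then invoke the automorphism symmetry of $C_5$ so that the four terms telescope via the chain rule to $2\mathbb{H}(X_1,\dots,X_5)$. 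None of these steps appears in your proposal, and without them the argument does not establish $\hom(J,G)\geq\hom(C_5,G)^2$ for the graph in question.
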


\begin{proof}
Denote the vertices of $C_5$ by $1,2,\dots,5$ in cyclic order. If $\hom(C_5,G)=0$, then the inequality holds trivially. So, let $G$ be a graph with $\hom(C_5,G)>0$. Let $f$ be a uniformly random homomorphism from $C_5$ to $G$ and let $X_i=f(i)$ for $1\leq i\leq 5$. Then, by Lemma~\ref{lem:maxEntropy},
\[\mathbb{H}(X_1,X_2,X_3,X_{4},X_5)=\mathbb{H}(f)=\log_2(\hom(C_5,G)).\] 

Label the vertices of $J$ by $1,\dots,10$ so that vertices $1,2,3,4$ and $5$ form a $5$-cycle, vertices $5,6,7$ and $8$ form a $4$-cycle and vertices $9$ and $10$ are in the two-vertex component of $J$. Our goal is to construct a $G$-homomorphism supported random variable $(Y_v: v\in V(J))$ which has high entropy. We start by applying Lemma~\ref{lem:glueEntropy} with $A_1=(X_1,\dots,X_4)$, $A_2=A_2'=X_5$ and $A_3=(X_1,X_2)$. This gives us random variables $(Z_1,\dots,Z_4)$, $Z_5$ and $(Z_6,Z_7)$ such that $(Z_1,\dots,Z_5)$ is distributed like $(X_1,\dots,X_5)$, $(Z_5,Z_6,Z_7)$ is distributed like $(X_5,X_1,X_2)$ and $(Z_1,\dots,Z_4)$ and $(Z_6,Z_7)$ are conditionally independent given $Z_5$. Applying this lemma again with $A_1=(Z_1,\dots,Z_4,Z_6)$, $A_2=(Z_5,Z_7)$, $A_2'=(X_5,X_2)$ and $A_3=X_1$ gives us random variables $Y_1,\dots,Y_8$ such that $(Y_1,\dots,Y_7)$ is distributed like $(Z_1,\dots,Z_7)$,  $(Y_5,Y_8,Y_7)$ is distributed like $(X_5,X_1,X_2)$ and $(Y_1,\dots,Y_4,Y_6)$ is conditionally independent of $Y_8$ given $(Y_5,Y_7)$. Finally, we simply let $(Y_9,Y_{10})$ be a copy of $(X_1,X_2)$ chosen independently of $(Y_1,\dots,Y_8)$. 

By Lemma~\ref{lem:chainRule}, $\mathbb{H}(Y_1,\dots,Y_{10})$ is equal to
\[\mathbb{H}(Y_1,\dots,Y_5) + \mathbb{H}(Y_6,Y_7\mid Y_1,\dots,Y_5) + \mathbb{H}(Y_8\mid Y_1,\dots,Y_7) + \mathbb{H}(Y_9,Y_{10}\mid Y_1,\dots,Y_8).\]
By Lemma~\ref{lem:deconditioning} and conditional independence, this is equal to
\[\mathbb{H}(Y_1,\dots,Y_5)+\mathbb{H}(Y_6,Y_7\mid Y_5) + \mathbb{H}(Y_8\mid Y_5,Y_7) + \mathbb{H}(Y_9,Y_{10}).\]
By construction, we may rewrite this expression as
\[\mathbb{H}(X_1,\dots,X_5)+\mathbb{H}(X_1,X_2\mid X_5) + \mathbb{H}(X_1\mid X_2,X_5) + \mathbb{H}(X_1,X_2).\]
Using Lemma~\ref{lem:deconditioning}, we can bound this below by 
\[\mathbb{H}(X_1,\dots,X_5)+\mathbb{H}(X_1,X_2\mid X_4,X_5) + \mathbb{H}(X_1\mid X_2,X_3,X_4,X_5) + \mathbb{H}(X_1,X_2).\]
By symmetry of $C_5$ and the fact that $f$ was chosen uniformly at random, $(X_1,X_2)$ and $(X_4,X_5)$ are identically distributed and so $\mathbb{H}(X_1,X_2)=\mathbb{H}(X_4,X_5)$. Likewise, $\mathbb{H}(X_1\mid X_2,X_3,X_4,X_5)=\mathbb{H}(X_3\mid X_1,X_2,X_4,X_5)$. So, the above expression can be rewritten as 
\[\mathbb{H}(X_1,\dots,X_5)+\mathbb{H}(X_1,X_2\mid X_4,X_5) + \mathbb{H}(X_3\mid X_1,X_2,X_4,X_5) + \mathbb{H}(X_4,X_5)\]
which, by Lemma~\ref{lem:chainRule}, is equal to $2\mathbb{H}(X_1,\dots,X_5)$. Thus, by Lemma~\ref{lem:maxEntropy},
\[\log_2(\hom(J,G))\geq \mathbb{H}(Y_1,\dots,Y_{10})\geq 2\mathbb{H}(X_1,\dots,X_5)=2\log_2(\hom(C_5,G))\]
which completes the proof.
\end{proof}

We are nearly ready to show that the first graph in Figure~\ref{fig:glueExamples} is common. The following standard lemma from graph limit theory will be used to convert inequalities for graphs into inequalities for graphons.

\begin{lem}[See~\cite{Lovasz12}]
\label{lem:limit}    
For $t\geq1$, let $H_1,\dots,H_t$ be graphs and $f_1,f_2:[0,1]^t\to \mathbb{R}$ be continuous functions. If every graph $G$ satisfies
\[f_1(t(H_1,G),\dots,t(H_t,G)) + f_2(t(H_1,\overline{G}),\dots,t(H_t,\overline{G})) \geq \epsilon(v(G))\]
where $\epsilon:\mathbb{N}\to \mathbb{R}$ is a function such that $\lim_{n\to\infty}\epsilon(n)=0$, then 
\[f_1(t(H_1,W),\dots,t(H_t,W)) + f_2(t(H_1,1-W),\dots,t(H_t,1-W))\geq0\]
for every graphon $W$.
\end{lem}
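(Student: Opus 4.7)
The plan is to reduce the graphon inequality to the stated graph inequality by approximating the graphon $W$ with a sequence of finite graphs sampled from $W$ itself. Given a graphon $W$ and an integer $n$, draw i.i.d.\ uniform samples $X_1,\dots,X_n\in[0,1]$ and form the $W$-random graph $\mathbb{G}(n,W)$ on vertex set $[n]$ by including each edge $ij$ independently with probability $W(X_i,X_j)$. Using the same samples, the complementary graph $\overline{\mathbb{G}(n,W)}$ is distributed exactly like $\mathbb{G}(n,1-W)$, since an edge is absent in $\mathbb{G}(n,W)$ with probability $1-W(X_i,X_j)$.

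The key technical step is to check that, for every fixed graph $H$, one has $t(H,\mathbb{G}(n,W))\to t(H,W)$ almost surely as $n\to\infty$. This is standard, but I would outline it as follows: for a fixed map $\phi:V(H)\to[n]$ that is injective, the probability it is a homomorphism equals $t(H,W)$ by definition; the $O(n^{v(H)-1})$ non-injective maps contribute $O(1/n)$ to $t(H,\mathbb{G}(n,W))$, so $\mathbb{E}[t(H,\mathbb{G}(n,W))]=t(H,W)+O(1/n)$. Concentration follows from Azuma--Hoeffding applied to the vertex-exposure martingale (changing the type of one vertex $X_i$ alters $t(H,G)$ by $O(1/n)$), giving an exponentially small tail, and Borel--Cantelli then yields almost sure convergence. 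Applying this to each $H_1,\dots,H_t$, and using the analogous statement for $\overline{\mathbb{G}(n,W)}$ in place of $\mathbb{G}(n,W)$ (via the identification with $\mathbb{G}(n,1-W)$), we conclude that almost surely
\[t(H_i,\mathbb{G}(n,W))\to t(H_i,W)\quad\text{and}\quad t(H_i,\overline{\mathbb{G}(n,W)})\to t(H_i,1-W)\]
simultaneously for $i=1,\dots,t$.

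To finish, fix such a sample path. By continuity of $f_1$ and $f_2$, the scalar quantity
\[f_1\bigl(t(H_1,\mathbb{G}(n,W)),\dots,t(H_t,\mathbb{G}(n,W))\bigr)+f_2\bigl(t(H_1,\overline{\mathbb{G}(n,W)}),\dots,t(H_t,\overline{\mathbb{G}(n,W)})\bigr)\]
converges to $f_1(t(H_1,W),\dots,t(H_t,W))+f_2(t(H_1,1-W),\dots,t(H_t,1-W))$. On the other hand, the hypothesis applied to the deterministic graph $\mathbb{G}(n,W)$ bounds this quantity below by $\varepsilon(n)\to 0$. Letting $n\to\infty$ yields the desired inequality. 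The only real obstacle is the almost sure convergence of sampled homomorphism densities; this is a textbook result (see, e.g., Lov\'asz, \emph{Large Networks and Graph Limits}), so in the write-up I would cite it rather than reprove it in full, keeping the argument short and focused on the continuity/limit step.
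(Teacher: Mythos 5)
Your argument is exactly the paper's proof (which is itself only a sketch): sample a $W$-random graph, use almost sure convergence of the sampled homomorphism densities for $G_{n,W}$ and its complement, and pass to the limit via continuity of $f_1$ and $f_2$. The extra detail you supply on the concentration step (Azuma--Hoeffding plus Borel--Cantelli) is correct and standard, so the proposal is fine as written.
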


\begin{proof}[Proof Sketch]
Given a graphon $W$, a \emph{$W$-random graph} of order $n$ is the graph $G_{n,W}$ with vertex set $\{1,\dots,n\}$ obtained by sampling $n$ uniformly random points $x_1,\dots,x_n$ of $[0,1]$ independently and then, for $1\leq i\neq j\leq n$, adding an edge from $i$ to $j$ with probability $W(x_i,x_j)$. By standard concentration inequalities, with probability $1$, we have $\lim_{n\to\infty}t(H,G_{n,W})=t(H,W)$ and $\lim_{n\to\infty}t(H,\overline{G}_{n,W})=t(H,1-W)$ for every graph $H$. The result follows by continuity of $f_1$ and $f_2$. 
\end{proof}

\begin{prop}
\label{prop:H1common}
The first graph in Figure~\ref{fig:glueExamples} is common.
\end{prop}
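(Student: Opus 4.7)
The plan is to apply Corollary~\ref{cor:convexity} with $F=C_5$. Since $C_5$ is strongly common by Theorem~\ref{th:OddCycles}, the task reduces to verifying the three conditions of the corollary for the graph $H$ in question. Reading off the construction in the proof of Proposition~\ref{prop:H1}, the graph $H$ is the union of a $5$-cycle and a $4$-cycle sharing a single vertex, so $v(H)=8$ and $e(H)=9$. Condition~\eqref{eq:H_ge_Fcor} is immediate since $9\ge5$, and writing $9=2\cdot 5-1$ dictates the choice $k=2$, $\ell=1$, which verifies~\eqref{eq:ehefcor}.

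The content of the argument is condition~\eqref{eq:thecorrelationcor}, which in this case asks for the graphon inequality
\[t(H,W)\,t(K_2,W)\ge t(C_5,W)^{2}.\]
To obtain this, I would first move Proposition~\ref{prop:H1} into the density language: dividing $\hom(J,G)\ge\hom(C_5,G)^{2}$ by $v(G)^{v(J)}=v(G)^{10}$ yields $t(J,G)\ge t(C_5,G)^{2}$, and multiplicativity of $t(\cdot,G)$ over disjoint unions gives $t(J,G)=t(H,G)\,t(K_2,G)$. Hence $t(H,G)\,t(K_2,G)\ge t(C_5,G)^{2}$ for every graph $G$. Lemma~\ref{lem:limit}, applied with $f_1(x,y,z)=xy-z^{2}$, $f_2\equiv 0$ and $\epsilon\equiv 0$, then promotes this inequality to every graphon $W$.

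With~\eqref{eq:H_ge_Fcor}, \eqref{eq:ehefcor} and~\eqref{eq:thecorrelationcor} all in hand, Corollary~\ref{cor:convexity} immediately yields that $H$ is common. I do not expect any genuine obstacle: all of the nontrivial mathematics---the strong commonality of $C_5$, the entropy-based binomial inequality in Proposition~\ref{prop:H1}, and the convexity machinery in Corollary~\ref{cor:convexity}---has already been established. The only thing to be careful about is the edge-count arithmetic aligning $e(H)=k\,e(F)-\ell$ with the exponent $\ell$ of $t(K_2,W)$ that appears in Proposition~\ref{prop:H1}, namely $\ell=1$, which matches the single extra edge contributed by the $K_2$ component of $J$.
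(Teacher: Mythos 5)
Your proposal is correct and follows essentially the same route as the paper: apply Corollary~\ref{cor:convexity} with $F=C_5$, $k=2$, $\ell=1$, and verify condition~\eqref{eq:thecorrelationcor} by combining Proposition~\ref{prop:H1} with multiplicativity of homomorphism densities over disjoint unions and Lemma~\ref{lem:limit}. The only cosmetic difference is that you invoke Theorem~\ref{th:OddCycles} for the strong commonality of $C_5$ where the paper uses Theorem~\ref{th:C5}; both are valid.
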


\begin{proof}
Denote the first graph in Figure~\ref{fig:glueExamples} by $H$ and let $J:=H\sqcup K_2$. Our goal is to apply Corollary~\ref{cor:convexity} with $F=C_5$, $k=2$ and $\ell=1$. By Theorem~\ref{th:C5}, $C_5$ is strongly common. Note that \eqref{eq:H_ge_Fcor} and \eqref{eq:ehefcor} hold trivially. To verify \eqref{eq:thecorrelationcor}, we use Proposition~\ref{prop:H1} to get
\[t(K_2,G)t(H,G) = t(J,G)=\frac{\hom(J,G)}{n^{v(J)}} \geq \frac{\hom(C_5,G)^2}{n^{10}} = t(C_5,G)^2\]
for every graph $G$. By Lemma~\ref{lem:limit}, this implies that $t(K_2,W)t(H,W)\geq t(C_5,W)$ for every graphon $W$. Thus, \eqref{eq:thecorrelationcor} holds and so $H$ is common by Corollary~\ref{cor:convexity}. 
\end{proof}

\section{Gluing Templates and Generalized Trees}
\label{sec:gluing}

We open this section by describing a class of graphs obtained by gluing together subgraphs of a particular graph $F$ in a tree-like manner. For a set $X$, let $2^X$ denote the collection of all subsets of $X$. 

\begin{defn}
Let $F$ be a graph, $T$ be a tree and let $\psi:V(T)\cup E(T)\to 2^{V(F)}$. We say that $(T,\psi)$ is an \emph{$F$-gluing template} if $\psi(st)\subseteq \psi(s)\cap\psi(t)$ for every edge $st$ of $T$. 
\end{defn}

Next, we explain the way in which an $F$-gluing template gives rise to a graph. Given a graph $F$ and a subset $S\subseteq V(F)$, let $F[S]$ be the subgraph of $F$ induced by $S$.

\begin{defn}
Let $F$ be a graph and $(T,\psi)$ be an $F$-gluing template. The \emph{generalized $F$-tree} corresponding to $(T,\psi)$ is the graph $J(T,\psi)$ constructed in the following manner. Start by taking a copy $F_s$ of $F[\psi(s)]$ for each $s\in V(T)$. Then, for each $st\in E(T)$ and each $v\in \psi(st)$, we identify the vertex of $F_s$ corresponding to $v$ with the vertex of $F_t$ corresponding to $v$.
\end{defn}

Figure \ref{fig:F-tree-example} gives an example of a generalized $C_5$-tree. In this example, $J(T,\psi)$ is the first graph depicted in Figure \ref{fig:glueExamples}, which was shown to be common in Section \ref{sec:example}.

\begin{figure}[h]
    \centering
    \includegraphics{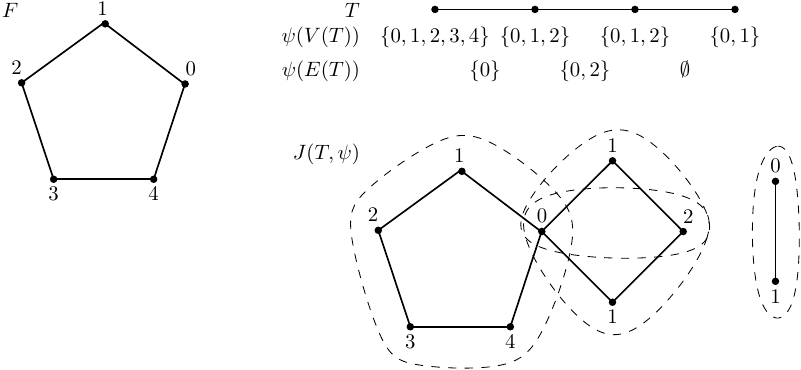}
    \caption{An example of a generalized $C_5$-tree}
    \label{fig:F-tree-example}
\end{figure}

\begin{rem}
\label{rem:v(J)}
Note that, for every $F$-gluing template $(T,\psi)$, the graph $J=J(T,\psi)$ satisfies
\[v(J) = \sum_{t\in V(T)}|\psi(t)| - \sum_{st\in E(T)}|\psi(st)|.\]
\end{rem}

We pause for a few basic examples. 

\begin{ex}
For every graph $F$ and tree $T$, if $\psi:V(T)\cup E(T)\to 2^{V(F)}$ maps each vertex of $T$ to $V(F)$ and each edge to $\emptyset$, then $J(T,\psi)$ is nothing more than the disjoint union of $v(T)$ copies of $F$.
\end{ex}

\begin{ex}
If $(T,\psi)$ is a $C_m$-gluing template such that $\psi(s)=V(C_m)$ for every $s\in V(T)$ and $\psi(st)$ is either a singleton or an edge of $C_m$ for all $st\in E(T)$, then $J(T,\psi)$ is a simple $C_m$-tree, as defined in the introduction. 
\end{ex}

The goal in the rest of this section is to describe a sufficient condition on an $F$-gluing template $(T,\psi)$ for the graph $J=J(T,\psi)$ to satisfy
\[t(J,G)\geq t(F,G)^{e(J)/e(F)}\]
for every graph $G$. As in the proof of Propositon~\ref{prop:H1}, our approach is to construct a distribution on the set $\Hom(J,G)$ of homomorphisms from $J$ to $G$ with entropy at least $e(J)/e(F)$ times the entropy of a uniformly random homomorphism from $F$ to $G$. The inequality will then follow from an application of Lemma~\ref{lem:maxEntropy}. This approach seems to have first appeared in the work of Kopparty and Rossman~\cite{KoppartyRossman11} and has been used in several papers related to Sidorenko and common graphs~\cite{BlekhermanRaymond22,ConlonKimLeeLee18,Behague+23+,Szegedy15,ConlonLee17,GrzesikLeeLidickyVolec22,Lee21}. The next lemma describes a way in which one can use a distribution on homomorphisms from $F$ to $G$ to get a distribution on homomorphisms from $J(T,\psi)$ to $G$ of high entropy.  

\begin{lem}
\label{lem:entropyTree}
Let $F$ and $G$ be graphs, let $(T,\psi)$ be an $F$-gluing template and let $J=J(T,\psi)$. For every $G$-homomorphism supported random variable $(X_v:v\in V(F))$ there exists $(Y_v: v\in V(J))$ which is $G$-homomorphism supported such that
\begin{equation}\label{eq:entropyTree}\mathbb{H}(Y_v: v\in V(J))=\sum_{s\in V(T)}\mathbb{H}(X_v: v\in \psi(s)) - \sum_{st\in E(T)}\mathbb{H}(X_v: v\in \psi(st)).\end{equation}
\end{lem}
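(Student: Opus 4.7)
The plan is to induct on $|V(T)|$. To make the induction work, I strengthen the hypothesis by additionally requiring that, for each $s\in V(T)$, the marginal distribution of $(Y_v:v\in \psi(s))$ agrees with that of $(X_v:v\in \psi(s))$. This strengthening is both needed to apply Lemma~\ref{lem:glueEntropy} in the inductive step (which requires equality in distribution of $A_2$ and $A_2'$) and is preserved by that lemma, since $(B_1,B_2)$ and $(A_1,A_2)$ are identically distributed, as are $(B_2,B_3)$ and $(A_2',A_3)$.

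The base case $|V(T)|=1$ is immediate: writing $V(T)=\{s\}$, the graph $J$ equals $F[\psi(s)]$ and I simply set $Y_v:=X_v$ for $v\in\psi(s)$; both the entropy identity and the marginal invariant hold trivially. For the inductive step, I pick a leaf $t\in V(T)$ with unique neighbour $s\in V(T)$, delete $t$ to form a smaller tree $T'$, and restrict $\psi$ to get an $F$-gluing template $(T',\psi')$. By the inductive hypothesis applied to $(T',\psi')$, there is a $G$-homomorphism supported random variable $(Y'_v:v\in V(J'))$ on $J':=J(T',\psi')$ satisfying \eqref{eq:entropyTree} over $T'$ and the strengthened marginal property.

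Now I apply Lemma~\ref{lem:glueEntropy} with $A_1:=(Y'_v:v\in V(J')\setminus\psi(st))$, $A_2:=(Y'_v:v\in \psi(st))$, $A_2':=(X_v:v\in\psi(st))$ and $A_3:=(X_v:v\in\psi(t)\setminus\psi(st))$. Since $\psi(st)\subseteq\psi(s)$ and $s\in V(T')$, the strengthened inductive hypothesis ensures that $A_2$ and $A_2'$ are identically distributed, as required. The lemma produces $(B_1,B_2,B_3)$ which I use to define $(Y_v:v\in V(J))$ on $V(J)=V(J')\cup\psi(t)$ in the obvious way; the marginal invariant at $t$ is inherited from the equality of $(B_2,B_3)$ and $(A_2',A_3)$, while the invariant at each $s'\in V(T')$ is inherited from $(Y'_v)$ via the equality of $(B_1,B_2)$ and $(A_1,A_2)$. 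To see that $(Y_v)$ is $G$-homomorphism supported, I use that $E(J)=\bigcup_{s'\in V(T)}E(F[\psi(s')])$, so almost sure feasibility on each piece $F[\psi(s')]$ (guaranteed by the marginal invariant at $s'$, since the $X$'s are $G$-homomorphism supported) gives feasibility on all of $J$. The entropy identity from Lemma~\ref{lem:glueEntropy} then yields
\[\mathbb{H}(Y_v:v\in V(J))=\mathbb{H}(Y'_v:v\in V(J'))+\mathbb{H}(X_v:v\in\psi(t))-\mathbb{H}(X_v:v\in\psi(st)),\]
which combines with the inductive entropy identity over $T'$ to give \eqref{eq:entropyTree} over $T$. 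The main obstacle is really only bookkeeping: the marginal invariant must be threaded through the induction in order to legitimately invoke Lemma~\ref{lem:glueEntropy} at each step, and one must verify that the construction produces a genuinely $G$-homomorphism supported tuple, which relies on the fact that the edge set of $J$ decomposes exactly as the union of the edge sets of the glued pieces.
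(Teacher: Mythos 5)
Your proposal is correct and follows essentially the same route as the paper: induction on $v(T)$ by removing a leaf, with the same strengthened inductive invariant (that the marginal of $(Y_v)$ on each glued piece matches the corresponding marginal of $(X_v)$), and the same application of Lemma~\ref{lem:glueEntropy} with $A_2$, $A_2'$ taken over the identified set $\psi(st)$. The only difference is cosmetic bookkeeping — the paper uses explicit bijections $\gamma_s:V_s\to\psi(s)$ between the vertices of the glued copies and subsets of $V(F)$ — and you are slightly more explicit than the paper in checking that the resulting tuple is $G$-homomorphism supported via the decomposition of $E(J)$ into the edge sets of the pieces.
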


\begin{proof}
For each $s\in V(T)$, let $V_s$ be the vertices of the copy $F_s$ of $F[\psi(s)]$ added during the construction of $J(T,\psi)$. Let $\gamma_s:V_s\to \psi(s)$ be the bijection which maps each vertex of $V_s$ to the corresponding vertex of $\psi(s)$. We prove, by induction on $v(T)$, that there exists a $G$-homomorphism supported random variable $(Y_v:v\in V(J))$ satisfying \eqref{eq:entropyTree} such that, additionally, for each $s\in V(T)$, we have that $(Y_v: v\in V_s)$ has the same distribution as $(X_{\gamma_s(v)}: v\in V_s)$. In the case that $v(T)=1$, simply set $(Y_v: v\in V_s)=(X_{\gamma_s(v)}: v\in V_s)$, where $s$ is the unique vertex of $T$.

Now, suppose that $v(T)\geq2$, let $\ell$ be a leaf of $T$ and let $w$ be the unique neighbour of $\ell$. Let $T'=T\setminus\{\ell\}$ and let $\psi'$ be the restriction of $\psi$ to $V(T')\cup E(T')$. Let $J'=J(T',\psi')$. By induction, we can construct a $G$-homomorphism supported $(Y_v: v\in V(J'))$ of entropy 
\[\sum_{s\in V(T')}\mathbb{H}(X_v: v\in \psi'(s)) - \sum_{st\in E(T')}\mathbb{H}(X_v: v\in \psi'(st))\]
such that $(Y_v: v\in V_s)$ has the same distribution as $(X_{\gamma_s(v)}: v\in V_s)$ for all $s\in V(T')$. To construct the desired random variable $(Y_v: v\in V(J))$, we apply Lemma~\ref{lem:glueEntropy}. Let $A_1=(Y_v: v\in V(J')\setminus(V_w\cap V_\ell))$, let $A_2=(Y_v: v\in V_w\cap V_\ell)$, let $A_2'=(X_{\gamma_w(v)}: v\in V_w\cap V_\ell)$ and let $A_3=(X_{\gamma_\ell(v)}: v\in V_\ell\setminus V_w)$. Note that $A_2$ and $A_2'$ are identically distributed by the inductive hypothesis. The existence of the desired distribution now follows from Lemma~\ref{lem:glueEntropy}.
\end{proof}

We would like to describe conditions under which the expression in \eqref{eq:entropyTree} can be bounded below by  $(e(J)/e(F))\mathbb{H}(X_v: v\in V(F))$. As in the proof of Proposition~\ref{prop:H1}, this will involve ``adding conditioning'' and doing some cancellation. The key is to use the following consequence of Lemmas~\ref{lem:chainRule} and~\ref{lem:deconditioning}.

\begin{lem}
\label{lem:replace}
Let $R_1,R_2$ and $R_3$ be finite disjoint sets. For every discrete random variable $(X_v:v\in R_1\cup R_2\cup R_3)$, 
\[\mathbb{H}(X_v: v\in R_1\cup R_2\cup R_3) - \mathbb{H}(X_v: v\in R_2\cup R_3)-\mathbb{H}(X_v: v\in R_1\cup R_2)+\mathbb{H}(X_v: v\in R_2)\]
is non-positive.
\end{lem}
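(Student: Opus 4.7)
The plan is to rewrite the four-term expression as a difference of two conditional entropies and then invoke the deconditioning inequality (Lemma~\ref{lem:deconditioning}). This quantity is, up to sign, the conditional mutual information $I(X_{R_1}; X_{R_3} \mid X_{R_2})$, which is known to be non-negative, so the target inequality is essentially a restatement of a standard entropy identity.

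Concretely, I would use the shorthand $X_S := (X_v : v \in S)$. Applying the chain rule (Lemma~\ref{lem:chainRule}) to the first two terms gives
\[
\mathbb{H}(X_{R_1 \cup R_2 \cup R_3}) - \mathbb{H}(X_{R_2 \cup R_3}) = \mathbb{H}(X_{R_1} \mid X_{R_2}, X_{R_3}),
\]
since $R_1, R_2, R_3$ are disjoint, so that $(X_v : v \in R_1 \cup R_2 \cup R_3)$ really is the tuple consisting of $X_{R_1}$ alongside $X_{R_2}$ and $X_{R_3}$. Applying the chain rule to the remaining two terms gives
\[
\mathbb{H}(X_{R_1 \cup R_2}) - \mathbb{H}(X_{R_2}) = \mathbb{H}(X_{R_1} \mid X_{R_2}).
\]
Subtracting, the quantity we want to bound becomes
\[
\mathbb{H}(X_{R_1} \mid X_{R_2}, X_{R_3}) - \mathbb{H}(X_{R_1} \mid X_{R_2}).
\]
Lemma~\ref{lem:deconditioning} says that conditioning on an additional random variable can only decrease conditional entropy, so this difference is non-positive, as required.

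There is essentially no obstacle here: once one recognizes that the four-term sum is the standard ``information lost by conditioning on the extra variables $X_{R_3}$'' telescope, the chain rule and deconditioning do all the work. The only minor bookkeeping is to note that the disjointness of $R_1, R_2, R_3$ allows us to split the tuple $X_{R_1 \cup R_2 \cup R_3}$ as $(X_{R_1}, X_{R_2}, X_{R_3})$ without duplicating coordinates, so that the chain rule applies cleanly. Equality holds precisely when $X_{R_1}$ and $X_{R_3}$ are conditionally independent given $X_{R_2}$, by the moreover clause of Lemma~\ref{lem:deconditioning}, though this is not needed for the statement itself.
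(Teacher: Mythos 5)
Your proof is correct and follows exactly the paper's argument: both rewrite the four-term expression via the chain rule (Lemma~\ref{lem:chainRule}) as $\mathbb{H}(X_v : v\in R_1 \mid X_v : v\in R_2\cup R_3) - \mathbb{H}(X_v : v\in R_1 \mid X_v : v\in R_2)$ and then apply the deconditioning inequality (Lemma~\ref{lem:deconditioning}). No differences of substance.
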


\begin{proof}
By Lemma~\ref{lem:deconditioning},
\[\mathbb{H}(X_v:v\in R_1\mid X_v: v\in R_2\cup R_3)\leq \mathbb{H}(X_v: v\in R_1\mid X_v : v\in R_2).\]
By Lemma~\ref{lem:chainRule},
\[\mathbb{H}(X_v:v\in R_1\mid X_v: v\in R_2) = \mathbb{H}(X_v: v\in R_1\cup R_2) - \mathbb{H}(X_v:v\in R_2)\]
and
\[\mathbb{H}(X_v:v\in R_1\mid X_v: v\in R_2\cup R_3) = \mathbb{H}(X_v: v\in R_1\cup R_2\cup R_3) - \mathbb{H}(X_v:v\in R_2\cup R_3).\]
Substituting these equalities into the inequality above completes the proof. 
\end{proof}

\begin{rem}
Note that, since $\mathbb{H}(X_v: v\in \emptyset)=0$, applying Lemma~\ref{lem:replace} in the case $R_2=\emptyset$ yields
\[\mathbb{H}(X_v: v\in R_1\cup R_3) - \mathbb{H}(X_v: v\in R_3)-\mathbb{H}(X_v: v\in R_1)\leq0\]
for any two disjoint sets $R_1$ and $R_3$. 
\end{rem}

Next, we observe that, if $f$ is a uniformly random homomorphism from a graph $F$ to a graph $G$, then, for any two subsets $S_1$ and $S_2$ of $V(F)$ which are ``symmetric'' to one another, the entropy of $(f(v):v\in S_1)$ is the same as the entropy of $(f(v): v\in S_2)$; one may recall that this idea was used in the proof of Proposition~\ref{prop:H1}. To formalize this, recall that an \emph{automorphism} of $F$ is an isomorphism from $F$ to itself. Let $\Aut(F)$ denote the set of all automorphisms of $F$. Given a graph $F$, let $\sim_F$ be the equivalence relation on $2^{V(F)}$ such that $S_1\sim_FS_2$ if there exists $\varphi\in\Aut(F)$ such that $\varphi(S_1)=S_2$. Let $\mathcal{C}_F$ be a subset of $2^{V(F)}$ containing one element from each equivalence class of $\sim_F$.

Consider the vector space $\mathbb{R}^{\mathcal{C}_F\setminus\{\emptyset\}}$ spanned by vectors $\vec{e}_S$ for $S\in\mathcal{C}_F\setminus\{\emptyset\}$. Let $\vec{e}_\emptyset$ be the zero vector and, for $S'\subseteq V(F)$, let $\vec{e}_{S'}=\vec{e}_S$ where $S\in\mathcal{C}_F$ and $S'\sim_F S$. Given an $F$-gluing template $(T,\psi)$, define
\[\vec{z}_{T,\psi}:=\sum_{s\in V(T)}\vec{e}_{\psi(s)}-\sum_{st\in E(T)}\vec{e}_{\psi(st)}.\]
For any pairwise disjoint subsets $R_1,R_2$ and $R_3$ of $V(F)$, define
\[\vec{x}_{R_1,R_2,R_3} := \vec{e}_{R_1\cup R_2\cup R_3} - \vec{e}_{R_2\cup R_3}-\vec{e}_{R_1\cup R_2}+\vec{e}_{R_2}.\]
In particular, if $R_2=\emptyset$, then
\[\vec{x}_{R_1,\emptyset,R_3} := \vec{e}_{R_1\cup R_3} - \vec{e}_{R_3}-\vec{e}_{R_1}.\]
One should observe that the coefficients in the expression for $\vec{z}_{T,\psi}$ mirror those on the right side of \eqref{eq:entropyTree} while the coefficients in the expression for $\vec{x}_{R_1,R_2,R_3}$ mirror those in the expression in Lemmas~\ref{lem:replace}. We can now reduce the problem of using Lemma~\ref{lem:replace} to bound \eqref{eq:entropyTree} below by $(e(J)/e(F))\mathbb{H}(X_v: v\in V(F))$ to solving a specific linear program.

\begin{defn}
Let $F$ be a graph, $(T,\psi)$ be an $F$-gluing template and $J=J(T,\psi)$. We say that $(T,\psi)$ is \emph{good} if $e(J)>0$ and the vector
\[(e(J)/e(F))\vec{e}_{V(F)} - \vec{z}_{T,\psi}\]
is in the convex cone generated by the vectors $\vec{x}_{R_1,R_2,R_3}$ over all pairwise disjoint $R_1,R_2,R_3$.
\end{defn}

Next, we relate the number of edges and vertices of $J(T,\psi)$ to that of $F$ when $(T,\psi)$ is a good $F$-gluing template.

\begin{lem}
\label{lem:sameDensity}
If $(T,\psi)$ is a good $F$-gluing template and $J=J(T,\psi)$, then
\[e(J)/v(J) = e(F)/v(F).\]
\end{lem}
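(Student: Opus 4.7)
The plan is to find a single linear functional on $\mathbb{R}^{\mathcal{C}_F\setminus\{\emptyset\}}$ that annihilates every generator $\vec{x}_{R_1,R_2,R_3}$ of the convex cone and to evaluate both sides of the defining inclusion at that functional. The natural candidate is the ``size'' functional: define $\phi:\mathbb{R}^{\mathcal{C}_F\setminus\{\emptyset\}}\to\mathbb{R}$ by $\phi(\vec{e}_S):=|S|$ for $S\in\mathcal{C}_F\setminus\{\emptyset\}$ and extend linearly. This is well defined on equivalence classes because automorphisms of $F$ preserve cardinality, and consistent with $\vec{e}_\emptyset=0$ since $|\emptyset|=0$.

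Next I would verify the key annihilation property. For any pairwise disjoint $R_1,R_2,R_3\subseteq V(F)$, disjointness gives $|R_1\cup R_2\cup R_3|-|R_2\cup R_3|-|R_1\cup R_2|+|R_2|=0$, so $\phi(\vec{x}_{R_1,R_2,R_3})=0$. Since $\phi$ is linear, it therefore vanishes on every non-negative combination of such vectors, i.e., on the entire convex cone appearing in the definition of ``good''.

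Now I would compute $\phi$ on each side of the defining relation. On one hand, $\phi\bigl((e(J)/e(F))\vec{e}_{V(F)}\bigr)=(e(J)/e(F))\,v(F)$. On the other hand, by Remark~\ref{rem:v(J)},
\[\phi(\vec{z}_{T,\psi})=\sum_{s\in V(T)}|\psi(s)|-\sum_{st\in E(T)}|\psi(st)|=v(J).\]
Since $(T,\psi)$ is good, $(e(J)/e(F))\vec{e}_{V(F)}-\vec{z}_{T,\psi}$ lies in the cone, so $\phi$ evaluates it to zero, giving $(e(J)/e(F))v(F)=v(J)$, which rearranges to the claimed identity $e(J)/v(J)=e(F)/v(F)$ (note $v(J)>0$ since $e(J)>0$).

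The conceptual content is essentially spent on choosing the right functional; there is no real obstacle beyond checking well-definedness on $\sim_F$-classes and the straightforward inclusion-exclusion that makes $\phi(\vec{x}_{R_1,R_2,R_3})$ vanish. The only point to handle with mild care is that the definition of $\vec{x}_{R_1,R_2,R_3}$ uses symbols $\vec{e}_{S'}$ for arbitrary $S'\subseteq V(F)$ rather than only for $S'\in\mathcal{C}_F$, but since these are interpreted via the equivalence-class convention and $\phi$ respects that convention, no issue arises.
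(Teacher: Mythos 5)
Your proof is correct and is essentially the paper's own argument in streamlined form: the paper scales each coefficient of $\vec{e}_S$ by $|S|$ (via the auxiliary vectors $\vec{\zeta}_{T,\psi}$ and $\vec{\xi}_{R_1,R_2,R_3}$) and then pairs with the all-ones vector, which is exactly your linear functional $\phi(\vec{e}_S)=|S|$, and both arguments hinge on the same inclusion--exclusion identity for pairwise disjoint $R_1,R_2,R_3$ together with Remark~\ref{rem:v(J)}. No changes needed.
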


\begin{proof}
Given an $F$-gluing template $(T,\psi)$, define
\[\vec{\zeta}_{T,\psi}:=\sum_{s\in V(T)}|\psi(s)|\cdot\vec{e}_{\psi(s)}-\sum_{st\in E(T)}|\psi(st)|\cdot\vec{e}_{\psi(st)}.\]
For pairwise disjoint subsets $R_1,R_2$ and $R_3$ of $V(F)$, define
\[\vec{\xi}_{R_1,R_2,R_3} := |R_1\cup R_2\cup R_3|\cdot\vec{e}_{R_1\cup R_2\cup R_3} - |R_2\cup R_3|\cdot\vec{e}_{R_2\cup R_3}-|R_1\cup R_2|\cdot\vec{e}_{R_1\cup R_2}+|R_2|\cdot\vec{e}_{R_2}.\]
Note that, for every set $S\subseteq V(F)$, the coefficient of $\vec{e}_S$ in $\vec{\zeta}_{T,\psi}$ is equal to $|S|$ times the coefficient of $\vec{e}_S$ in $\vec{z}_{T,\psi}$. Likewise, the coefficient of $\vec{e}_S$ in  $\vec{\xi}_{R_1,R_2,R_3}$ is $|S|$ times the coefficient of $\vec{e}_S$ in $x_{R_1,R_2,R_3}$.

Now, since $(T,\psi)$ is good, we can write
\begin{equation}\label{eq:zthingy}\vec{z}_{T,\psi} + \sum_{R_1,R_2,R_3}c_{R_1,R_2,R_3}\cdot \vec{x}_{R_1,R_2,R_3}=(e(J)/e(F))\cdot \vec{e}_{V(F)}\end{equation}
for non-negative constants $c_{R_1,R_2,R_3}$. Now, consider the expression
\[\vec{\zeta}_{T,\psi} + \sum_{R_1,R_2,R_3}c_{R_1,R_2,R_3}\cdot \vec{\xi}_{R_1,R_2,R_3}.\]
As discussed in the previous paragraph, for each $S\subseteq V(F)$, the coefficient $\vec{e}_S$ in this expression is equal to $|S|$ times the coefficient of $\vec{e}_S$ in the expression on the left side of \eqref{eq:zthingy}. Therefore, by \eqref{eq:zthingy}, 
\begin{equation}\label{eq:zetathingy}\vec{\zeta}_{T,\psi} + \sum_{R_1,R_2,R_3}c_{R_1,R_2,R_3}\cdot \vec{\xi}_{R_1,R_2,R_3}=(e(J)/e(F))v(F)\cdot \vec{e}_{V(F)}.\end{equation}
Let $\vec{j}$ denote the all-ones vector in $\mathbb{R}^{\mathcal{C}_F}$ and $\langle\cdot,\cdot\rangle$ denote the standard inner product. By Remark~\ref{rem:v(J)}, 
\[v(J) = \left\langle\vec{\zeta}_{T,\psi},\vec{j}\right\rangle\]
which, by \eqref{eq:zetathingy}, is equal to
\[\left\langle(e(J)/e(F))v(F)\cdot \vec{e}_{V(F)} - \sum_{R_1,R_2,R_3}c_{R_1,R_2,R_3}\cdot \vec{\xi}_{R_1,R_2,R_3},\vec{j}\right\rangle\]
\[=(e(J)/e(F))v(F) - \sum_{R_1,R_2,R_3}c_{R_1,R_2,R_3}\cdot \left(|R_1\cup R_2\cup R_3|-|R_2\cup R_3|-|R_1\cup R_2|+|R_2|\right).\]
This final expression is simply equal to $(e(J)/e(F))v(F)$ since the summation is over pairwise disjoint sets $R_1,R_2,R_3$. Thus, $v(J)=(e(J)/e(F))v(F)$ and the proof is complete.
\end{proof}

The key lemma of this section is as follows.

\begin{lem}
\label{lem:goodGluing}
If $(T,\psi)$ is a good $F$-gluing template and $J=J(T,\psi)$, then
\[t(J,G)\geq t(F,G)^{e(J)/e(F)}\]
for every graph $G$.
\end{lem}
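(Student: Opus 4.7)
The plan is to prove the stronger statement $\hom(J,G) \geq \hom(F,G)^{e(J)/e(F)}$ at the level of homomorphism counts, and then divide by $v(G)^{v(J)}$ and invoke Lemma~\ref{lem:sameDensity} to recover the density version. Fix a graph $G$; if $\hom(F,G)=0$ the conclusion is trivial because $e(J)>0$ by the definition of goodness, so assume $\hom(F,G)\geq 1$. Let $f$ be a uniformly random homomorphism from $F$ to $G$ and set $X_v:=f(v)$, so that Lemma~\ref{lem:maxEntropy} gives $\mathbb{H}(X_v:v\in V(F))=\log_2\hom(F,G)$. Applying Lemma~\ref{lem:entropyTree} to $(X_v:v\in V(F))$ yields a $G$-homomorphism supported random variable $(Y_v:v\in V(J))$ with
\[\mathbb{H}(Y_v:v\in V(J))=\sum_{s\in V(T)}\mathbb{H}(X_v:v\in\psi(s))-\sum_{st\in E(T)}\mathbb{H}(X_v:v\in\psi(st)).\]
By Lemma~\ref{lem:maxEntropy} again, $\log_2\hom(J,G)\geq \mathbb{H}(Y_v:v\in V(J))$, so it suffices to bound the right-hand side above by $(e(J)/e(F))\log_2\hom(F,G)$ from below.

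To express this lower bound as linear algebra, note that $\Aut(F)$ acts on $\Hom(F,G)$ by precomposition and preserves the uniform distribution of $f$; hence $\mathbb{H}(X_v:v\in S)$ depends only on the $\sim_F$-class of $S$. This allows us to define a linear functional $h$ on $\mathbb{R}^{\mathcal{C}_F\setminus\{\emptyset\}}$ by $h(\vec{e}_S):=\mathbb{H}(X_v:v\in S)$ for $S\in\mathcal{C}_F\setminus\{\emptyset\}$, extended linearly; the convention $\vec{e}_\emptyset=\vec{0}$ matches $\mathbb{H}(X_v:v\in\emptyset)=0$, and the automorphism invariance guarantees that $h(\vec{e}_{S'})=\mathbb{H}(X_v:v\in S')$ for every (not just representative) subset $S'\subseteq V(F)$. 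Under this encoding, the sum displayed above is precisely $h(\vec{z}_{T,\psi})$ and $h(\vec{e}_{V(F)})=\log_2\hom(F,G)$.

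Now the two remaining ingredients combine cleanly. Lemma~\ref{lem:replace} is exactly the statement that $h(\vec{x}_{R_1,R_2,R_3})\leq 0$ for all pairwise disjoint triples $R_1,R_2,R_3\subseteq V(F)$. Goodness says $(e(J)/e(F))\vec{e}_{V(F)}-\vec{z}_{T,\psi}$ lies in the convex cone generated by the $\vec{x}_{R_1,R_2,R_3}$, so applying the linear functional $h$ and using non-negativity of the coefficients gives
\[(e(J)/e(F))\log_2\hom(F,G)-h(\vec{z}_{T,\psi})\leq 0,\]
which is the desired entropy inequality. Combining the chain of inequalities yields $\hom(J,G)\geq \hom(F,G)^{e(J)/e(F)}$; dividing by $v(G)^{v(J)}$ and using $v(J)=(e(J)/e(F))v(F)$ from Lemma~\ref{lem:sameDensity} converts this into $t(J,G)\geq t(F,G)^{e(J)/e(F)}$.

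The main conceptual obstacle is already handled by the preceding machinery: Lemma~\ref{lem:entropyTree} produces a high-entropy homomorphism-supported distribution for $J$, Lemma~\ref{lem:replace} provides the base inequalities, and the definition of goodness is precisely engineered so that the difference between the trivial upper bound $(e(J)/e(F))\vec{e}_{V(F)}$ and the entropy expression $\vec{z}_{T,\psi}$ decomposes into such base inequalities. The only subtle verification is the well-definedness of the linear functional $h$ on arbitrary subsets of $V(F)$, which rests on $\Aut(F)$-invariance of the uniform distribution on $\Hom(F,G)$; after that, the proof is a short bookkeeping step.
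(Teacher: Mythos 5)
Your proposal is correct and follows essentially the same route as the paper's proof: uniform random homomorphism from $F$, Lemma~\ref{lem:entropyTree} for the high-entropy distribution on $\Hom(J,G)$, Lemma~\ref{lem:replace} plus goodness for the entropy lower bound, and Lemma~\ref{lem:sameDensity} to convert counts to densities. Packaging the automorphism-invariance of the entropies as a linear functional $h$ on $\mathbb{R}^{\mathcal{C}_F\setminus\{\emptyset\}}$ is just a cleaner formalization of the coefficient-matching the paper does implicitly.
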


\begin{proof}
If there are no homomorphisms from $F$ to $G$, then $t(F,G)=0$ and the inequality holds trivially. Otherwise, $f$ be a uniformly random homomorphism from $F$ to $G$ and, for each $v\in V(F)$, let $X_v:=f(v)$. Note that, by the uniformity of $f$, we have that $\mathbb{H}(X_v: v\in S)=\mathbb{H}(X_v: v\in S')$ whenever $S\sim_FS'$; this will be applied many times in the rest of the proof. Let $(Y_v: v\in V(J))$ be a $G$-homomorphism supported random variable constructed from $(X_v: v\in V(F))$ as in Lemma~\ref{lem:entropyTree}. By \eqref{eq:entropyTree}, 
\begin{equation}\label{eq:entropyTreeAgain}\mathbb{H}(Y_v:v\in V(J))=\sum_{s\in V(T)}\mathbb{H}(X_v: v\in\psi(s)) - \sum_{st\in E(T)}\mathbb{H}(X_v: v\in \psi(st)).\end{equation}

Since $(T,\psi)$ is good, we can choose $c_{R_1,R_2,R_3}\geq0$ for each triple $R_1,R_2,R_3$ of pairwise disjoint subsets of $V(F)$ such that so that 
\begin{equation}\label{eq:vectorSpace}(e(J)/e(F))\vec{e}_{V(F)} = \sum_{s\in V(T)}\vec{e}_{\psi(s)}-\sum_{st\in E(T)}\vec{e}_{\psi(st)}+ \sum_{R_1,R_2,R_3}c_{R_1,R_2,R_3}\cdot \vec{x}_{R_1,R_2,R_3}.\end{equation}
By Lemma~\ref{lem:replace} and the fact that $c_{R_1,R_2,R_3}\geq0$ for all $R_1,R_2,R_3$, the linear combination consisting of the sum over all $R_1,R_2,R_3$ of $c_{R_1,R_2,R_3}$ times
\[\mathbb{H}(X_v: v\in R_1\cup R_2\cup R_3) - \mathbb{H}(X_v: v\in R_2\cup R_3)-\mathbb{H}(X_v: v\in R_1\cup R_2)+\mathbb{H}(X_v: v\in R_2)\]
is non-positive. By \eqref{eq:vectorSpace}, adding this linear combination to the right side of \eqref{eq:entropyTreeAgain} yields $(e(J)/e(F))\mathbb{H}(X_v: v\in V(F))$. So, 
\[\mathbb{H}(Y_v: v\in V(J))\geq (e(J)/e(F))\mathbb{H}(X_v: v\in V(F)).\]
Combining this with two applications of Lemma~\ref{lem:maxEntropy} and the fact that $f$ is uniform, we get
\[\log_2(\hom(J,G)) \geq \mathbb{H}(Y_v: v\in V(J))\]
\[\geq (e(J)/e(F))\mathbb{H}(X_v: v\in V(F)) = (e(J)/e(F))\log_2(\hom(F,G)).\]
Therefore,
\[t(J,G)=\frac{\hom(J,G)}{v(G)^{v(J)}} \geq \frac{\hom(F,G)^{e(J)/e(F)}}{v(G)^{v(J)}} = \left(\frac{\hom(F,G)}{v(G)^{v(J)(e(F)/e(J))}}\right)^{e(J)/e(F)}.\]
By Lemma~\ref{lem:sameDensity}, we have $v(J)(e(F)/e(J))=v(F)$ and so the right side is precisely equal to $t(F,G)^{e(J)/e(F)}$. This completes the proof.
\end{proof}

\begin{cor}
\label{cor:deleteK2s}
Let $F$ be a graph and $\ell\geq0$. If $(T,\psi)$ is a good $F$-gluing template and $H$ is obtained from $J(T,\psi)$ by deleting $\ell$ two-vertex components and an arbitrary number of one-vertex components, then
\[t(H,G)t(K_2,G)^\ell\geq t(F,G)^{(e(H)+\ell)/e(F)}\]
for every graph $G$.
\end{cor}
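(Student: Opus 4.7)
The plan is to reduce Corollary~\ref{cor:deleteK2s} directly to Lemma~\ref{lem:goodGluing} by exploiting the multiplicativity of $t(\cdot,G)$ across disjoint unions. Let $J=J(T,\psi)$ and write $J=H\sqcup(\ell\cdot K_2)\sqcup(k\cdot K_1)$ for some $k\geq0$, which is possible by the assumption on how $H$ is obtained from $J$.

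First I would note the identity $t(H_1\sqcup H_2,G)=t(H_1,G)\,t(H_2,G)$, which is immediate from the definition of $t$ (the homomorphisms factor across components, as do the integrals over $[0,1]^{V(H_i)}$, or equivalently $\hom(H_1\sqcup H_2,G)=\hom(H_1,G)\hom(H_2,G)$ and $v(H_1\sqcup H_2)=v(H_1)+v(H_2)$). Combining this with the trivial identity $t(K_1,G)=1$ gives
\[
t(J,G)=t(H,G)\cdot t(K_2,G)^{\ell}\cdot t(K_1,G)^{k}=t(H,G)\cdot t(K_2,G)^{\ell}.
\]

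Next I would observe that the exponents in the target inequality match up: since each deleted $K_2$ contributes one edge and each deleted $K_1$ contributes none, we have $e(J)=e(H)+\ell$. Therefore Lemma~\ref{lem:goodGluing} applied to $(T,\psi)$ gives
\[
t(H,G)\,t(K_2,G)^{\ell}=t(J,G)\geq t(F,G)^{e(J)/e(F)}=t(F,G)^{(e(H)+\ell)/e(F)},
\]
which is exactly the claimed inequality.

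There is essentially no obstacle here: all the substantive work is already packaged in Lemma~\ref{lem:goodGluing}, and the corollary is really just a bookkeeping observation that deleting one-vertex components is free and deleting a two-vertex component precisely peels off a factor of $t(K_2,G)$ from the homomorphism density. The only thing worth checking carefully is that $H$, as obtained from $J$ in the statement, really has the claimed decomposition $J=H\sqcup(\ell\cdot K_2)\sqcup(k\cdot K_1)$ — but this is immediate from the hypothesis that $H$ arises from $J$ by deleting $\ell$ two-vertex components and some one-vertex components.
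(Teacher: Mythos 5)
Your proof is correct and follows exactly the paper's own argument: the paper likewise derives the corollary from Lemma~\ref{lem:goodGluing} together with the identities $t(H,G)t(K_2,G)^{\ell}=t(J,G)$ and $e(J)=e(H)+\ell$. You have simply spelled out the multiplicativity of $t(\cdot,G)$ over disjoint unions in more detail.
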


\begin{proof}
This follows from Lemma~\ref{lem:goodGluing} and the facts that $e(J)=e(H)+\ell$ and $t(H,G)t(K_2,G)^\ell=t(J,G)$ for every graph $G$.
\end{proof}

\section{Generalized Trees of Odd Cycles}
\label{sec:oddCycles}

In this section, we apply the results built up so far to show that certain pairs of generalized $C_m$-trees are $(p_1,p_2)$-common for some $p_1,p_2\in (0,1)$ such that $p_1+p_2=1$. The following is our most general result of this type, from which Theorems~\ref{th:Sample} and~\ref{th:SimpleTrees} will be derived.

\begin{thm}
\label{th:cycleTree}
Let $m\geq3$ be odd and, for $i\in\{1,2\}$, let $(T_i,\psi_i)$ be a good $C_m$-gluing template, let $J_i=J(T_i,\psi_i)$ and let $H_i$ be obtained from $J_i$ by deleting $\ell_i\geq0$ two-vertex components and an arbitrary number of one-vertex components. If $e(H_i)\geq m$ for $i\in \{1,2\}$ and $p_1,p_2\in(0,1)$ such that $p_1+p_2=1$ and
\[\frac{e(H_1)+\ell_1}{e(H_1)p_1^{m-1}} = \frac{e(H_2)+\ell_2}{e(H_2)p_2^{m-1}},\]
then $(H_1,H_2)$ is $(p_1,p_2)$-common. 
\end{thm}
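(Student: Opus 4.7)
The plan is to prove Theorem~\ref{th:cycleTree} by a direct application of the convexity lemma (Lemma~\ref{lem:convexity}), using $C_m$ as the strongly common base graph. All the ingredients are now in place: Theorem~\ref{th:OddCycles} supplies the fact that $C_m$ is strongly common for every odd $m\geq3$, Corollary~\ref{cor:deleteK2s} gives the binomial-type inequality for homomorphism densities of generalized $C_m$-trees with components removed, and Lemma~\ref{lem:limit} lets us push the graph-level inequality to graphons.

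Concretely, I would set $F = C_m$ and, for $i\in\{1,2\}$, choose the parameter
\[
k_i := \frac{e(H_i)+\ell_i}{m}=\frac{e(J_i)}{m}
\]
to feed into Lemma~\ref{lem:convexity}, with $\ell_i$ inherited from the theorem's hypothesis. Then I would check the four numbered conditions of Lemma~\ref{lem:convexity} one by one. Condition \eqref{eq:H_ge_F} is exactly the assumption $e(H_i)\geq m$. Condition \eqref{eq:ehef} is immediate from the definition of $k_i$: $k_i e(F)-\ell_i = k_i m-\ell_i = e(H_i)$. Condition \eqref{eq:p1p2} is obtained from the theorem's hypothesis
\[
\frac{e(H_1)+\ell_1}{e(H_1)p_1^{m-1}}=\frac{e(H_2)+\ell_2}{e(H_2)p_2^{m-1}}
\]
by dividing both sides by $m$ and substituting $k_i m = e(H_i)+\ell_i$.

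The main substantive step is verifying condition \eqref{eq:thecorrelation}. By goodness of the template $(T_i,\psi_i)$ and the construction of $H_i$ from $J_i$ by deleting $\ell_i$ two-vertex components and an arbitrary number of isolated vertices, Corollary~\ref{cor:deleteK2s} gives, for every graph $G$,
\[
t(H_i,G)\,t(K_2,G)^{\ell_i}\geq t(C_m,G)^{(e(H_i)+\ell_i)/m}=t(C_m,G)^{k_i}.
\]
To translate this into a graphon inequality, I would invoke Lemma~\ref{lem:limit} with $t=2$, $H_1 = H_i$, $H_2 = K_2$, and the continuous function $(x,y)\mapsto x\,y^{\ell_i}-t(C_m,\cdot)^{k_i}$ (split appropriately across the two sides of the inequality); the error term $\epsilon(n)$ is $0$ because the inequality holds exactly for every graph. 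This yields condition \eqref{eq:thecorrelation}, and Lemma~\ref{lem:convexity} then delivers the $(p_1,p_2)$-commonality of $(H_1,H_2)$.

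The one technical wrinkle I expect is the integrality requirement on $k_i$ built into Lemma~\ref{lem:convexity}: we need $m\mid e(J_i)$, which is not obviously forced by goodness of the template (note Lemma~\ref{lem:sameDensity} only gives $e(J_i)=v(J_i)$ when $F=C_m$). The cleanest resolution is to enlarge the template, if necessary, by appending isolated vertices of $T_i$ whose image under $\psi_i$ is a two-vertex edge of $C_m$; each such addition adjoins a $K_2$-component to $J_i$ and thus increases $e(J_i)$ by $1$, while preserving goodness (the template decomposes as a disjoint gluing and the vector identity for $\vec{z}_{T,\psi}$ extends). Doing this until $e(J_i)$ becomes a multiple of $m$ replaces $(T_i,\psi_i,\ell_i)$ by a template with the same underlying $H_i$ but a larger $\ell_i'$; rerunning the argument above with $\ell_i'$ in place of $\ell_i$, the same graphon inequality from Corollary~\ref{cor:deleteK2s} holds and condition \eqref{eq:p1p2} still holds because $e(H_i)+\ell_i'$ changes by a multiple of $m$ exactly compensated by the update to $k_i$. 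The resulting system still matches the theorem's hypothesis, so Lemma~\ref{lem:convexity} applies and $(H_1,H_2)$ is $(p_1,p_2)$-common as claimed.
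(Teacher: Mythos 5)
Your core argument is exactly the paper's proof: set $F=C_m$, $k_i=(e(H_i)+\ell_i)/m=e(J_i)/m$, verify conditions \eqref{eq:H_ge_F}--\eqref{eq:thecorrelation} of Lemma~\ref{lem:convexity} using Theorem~\ref{th:OddCycles} for strong commonality and Corollary~\ref{cor:deleteK2s} plus Lemma~\ref{lem:limit} for condition \eqref{eq:thecorrelation}. That part is fine (your explicit use of Lemma~\ref{lem:limit} to pass from graphs to graphons is a detail the paper's proof of this theorem leaves tacit, though it needs $t=3$ with $H_3=C_m$ among the listed graphs, not $t=2$).

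The ``technical wrinkle'' paragraph, however, proposes a repair that does not work. First, adjoining a vertex to $T_i$ whose image under $\psi_i$ is an edge of $C_m$ adds a $K_2$-component to $J_i$, so $e(J_i)$ increases by $1$ while $v(J_i)$ increases by $2$; since Lemma~\ref{lem:sameDensity} shows every good $C_m$-gluing template satisfies $e(J)=v(J)$, the enlarged template is \emph{never} good, contrary to your claim that goodness is preserved. (Concretely, $(1/m)\vec{e}_{V(C_m)}-\vec{e}_{\{1,2\}}$ cannot lie in the cone: every generator $\vec{x}_{R_1,R_2,R_3}$ has coefficients summing to zero against the functional $\vec{e}_S\mapsto |S|$, while this vector evaluates to $-1$.) Second, even granting goodness, to reach $m\mid e(J_i)$ you must add $r_i\equiv -e(J_i)\pmod m$ copies of $K_2$ with $0<r_i<m$, so $e(H_i)+\ell_i$ changes by $r_i$, \emph{not} by a multiple of $m$; condition \eqref{eq:p1p2} then becomes $\tfrac{e(H_1)+\ell_1+r_1}{e(H_1)p_1^{m-1}}=\tfrac{e(H_2)+\ell_2+r_2}{e(H_2)p_2^{m-1}}$, which is a different constraint from the theorem's hypothesis. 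The honest resolution is the one the paper adopts implicitly: the argument requires $k_i=e(J_i)/m$ to be a non-negative integer, and this integrality should be treated as part of the setup (it holds in every application in Sections~\ref{sec:example} and~\ref{sec:oddCycles}, where each $\psi_i(s)$ is either all of $V(C_m)$ or contributes whole extra $K_2$/$K_1$ components in balanced numbers), rather than manufactured by modifying the template.
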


\begin{proof}
For $i\in \{1,2\}$, define 
\[k_i:=\frac{e(H_i)+\ell_i}{m}=\frac{e(J_i)}{m}.\]
We verify the hypotheses of Lemma~\ref{lem:convexity} with $F=C_m$. The fact that $C_m$ is strongly common follows from Theorem~\ref{th:OddCycles}. The conditions \eqref{eq:H_ge_F} and \eqref{eq:p1p2} hold by the hypotheses of the theorem and \eqref{eq:ehef} holds by definition of $k_1$ and $k_2$. By Corollary~\ref{cor:deleteK2s}, \eqref{eq:thecorrelation} holds as well. Thus, the result follows by Lemma~\ref{lem:convexity}. 
\end{proof}

Our goal in the rest of this section is to derive Theorems~\ref{th:Sample} and~\ref{th:SimpleTrees} from Theorem~\ref{th:cycleTree}. We restate these results for convenience. 

\repeatThm{Sample}

\begin{proof}
The first graph in Figure~\ref{fig:glueExamples} was shown to be common in Proposition~\ref{prop:H1common}. So, we focus on the second and third graphs, which we denote by $H_2$ and $H_3$, respectively. We describe two good $C_5$-gluing templates $(T_2,\psi_2)$ and $(T_3,\psi_3)$ such that, for $i\in\{2,3\}$, the graph obtained from $J_i(T_i,\psi_i)$ by deleting all components on at most two vertices is precisely $H_i$. By Theorem~\ref{th:cycleTree}, this is sufficient for showing that $H_i$ is common for $i\in \{2,3\}$. 

First, let $T_2=P_6$ where the vertices of $P_6$ are labelled $1,2,\dots,6$ in the order that they come on the path. Let
\[\psi_2(1)=\psi_2(2)=\psi_2(4)=\{1,2,3,4,5\},\quad \psi_2(3)=\{1,2,3\},\quad \psi_2(5)=\{1,2\},\quad \psi_2(6)=\{1\}.\] 
Also, let 
\[\psi_2(12)=\{2,3,4,5\},\quad \psi_2(23)=\{1\}, \quad\psi_2(34)=\{3\},\quad \psi_2(45)=\psi_2(56)=\emptyset.\] 
We have that $J(T_2,\psi_2)=H_2\sqcup K_2\sqcup K_1$. To see that $(T_2,\psi_2)$ is good, note that
\[\vec{z}_{(T_2,\psi_2)} = 3\vec{e}_{\{1,2,3,4,5\}} + \vec{e}_{\{1,2,3\}}+\vec{e}_{\{1,2\}}+\vec{e}_{\{1\}} - \vec{e}_{\{1,2,3,4\}}-\vec{e}_{\{1\}}-\vec{e}_{\{3\}}\]
which, since $\vec{e}_{\{1\}}=\vec{e}_{\{3\}}$, is equal to
\[3\vec{e}_{\{1,2,3,4,5\}} + \vec{e}_{\{1,2,3\}}+\vec{e}_{\{1,2\}}- \vec{e}_{\{1,2,3,4\}}-\vec{e}_{\{1\}}.\]
Starting with $\vec{z}_{(T_2,\psi_2)}$ and adding the vectors
\[\vec{x}_{\{1\},\{2,3\},\{4\}} = \vec{e}_{\{1,2,3,4\}}-\vec{e}_{\{2,3,4\}}-\vec{e}_{\{1,2,3\}}+\vec{e}_{\{2,3\}}=\vec{e}_{\{1,2,3,4\}}-2\vec{e}_{\{1,2,3\}}+\vec{e}_{\{1,2\}}\]
and
\[\vec{x}_{\{1\},\{2\},\{3\}} = \vec{e}_{\{1,2,3\}}-\vec{e}_{\{2,3\}}-\vec{e}_{\{1,2\}}+\vec{e}_{\{2\}}=\vec{e}_{\{1,2,3\}}-2\vec{e}_{\{1,2\}}+\vec{e}_{\{1\}}\]
yields $3\vec{e}_{\{1,2,3,4,5\}}$. Therefore, $(T_2,\psi_2)$ is good. By Theorem~\ref{th:cycleTree}, we get that $H_2$ is common. 

Next, let $T_3=P_5$ where the vertices are labelled $1,\dots,5$ in the order that they come on the path. Let
\[\psi_3(1)=\psi_3(2)=\psi_3(3)=\{1,2,3,4,5\},\quad \psi_3(4)=\psi_3(5)=\{1,2\}.\] 
Also, let 
\[\psi_3(12)=\{5\},\quad \psi_3(23)=\{2,3,4\}, \quad\psi_3(34)=\psi_3(45)=\emptyset.\] 
We have that $J(T_3,\psi_3)=H_3\sqcup K_2\sqcup K_2$. To see that $(T_3,\psi_3)$ is good, note that
\[\vec{z}_{(T_3,\psi_3)} = 3\vec{e}_{\{1,2,3,4,5\}} +2\vec{e}_{\{1,2\}} - \vec{e}_{\{5\}} - \vec{e}_{\{2,3,4\}}.\]
Starting with $\vec{z}_{(T_3,\psi_3)}$ and adding the vector
\[\vec{x}_{\{1\},\{2\},\{3\}} = \vec{e}_{\{1,2,3\}}-\vec{e}_{\{2,3\}}-\vec{e}_{\{1,2\}}+\vec{e}_{\{2\}}=\vec{e}_{\{2,3,4\}}-2\vec{e}_{\{1,2\}}+\vec{e}_{\{5\}}\]
yields $3\vec{e}_{\{1,2,3,4,5\}}$. Therefore, $(T_3,\psi_3)$ is good and so $H_3$ is common by Theorem~\ref{th:cycleTree}.
\end{proof}

Next, we prove Theorem~\ref{th:SimpleTrees}. The ``only if'' part of the theorem uses the following result of~\cite{FirstPaper}. Given a graph $H$ and an integer $m\geq3$, let $c_m(H)$ be the number of unlabelled cycles of length $m$ in $H$. The \emph{girth} of $H$ is $g(H):=\min\{m: c_m(H)>0\}$. 

\begin{thm}[Behague, Morrison and Noel~\cite{FirstPaper}]
\label{th:girthThm}
Let $m\geq3$ be odd and suppose that $g(H_1)=g(H_2)=m$. If $(H_1,H_2)$ is $(p_1,p_2)$-common, then
\[\frac{c_m(H_1)}{e(H_1)p_1^{m-1}}=\frac{c_m(H_2)}{e(H_2)p_2^{m-1}}.\]
\end{thm}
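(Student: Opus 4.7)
The plan is to perturb the constant graphons $W_i \equiv p_i$ (which saturate the $(p_1,p_2)$-common inequality) and exploit that the resulting polynomial in the perturbation parameter must remain non-negative. Specifically, I would fix a symmetric bounded kernel $U$ (to be specified below) and set $W_1(\epsilon) := p_1 + \epsilon U$ and $W_2(\epsilon) := p_2 - \epsilon U$; for $|\epsilon|$ sufficiently small these are graphons with $W_1(\epsilon)+W_2(\epsilon)=1$. Writing
\[\Phi(\epsilon) := \frac{t(H_1, W_1(\epsilon))}{e(H_1)\,p_1^{e(H_1)-1}} + \frac{t(H_2, W_2(\epsilon))}{e(H_2)\,p_2^{e(H_2)-1}} - \frac{p_1}{e(H_1)} - \frac{p_2}{e(H_2)},\]
the hypothesis yields $\Phi(\epsilon)\ge 0$ throughout a neighbourhood of $0$. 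By Lemma~\ref{lem:expansion}, $\Phi$ is a polynomial in $\epsilon$ with $\Phi(0)=0$ whose coefficient of $\epsilon^k$ equals
\[\frac{1}{e(H_1)\,p_1^{k-1}}\sum_{\substack{E\subseteq E(H_1)\\|E|=k}} t(H_1[E], U) \;+\; \frac{(-1)^k}{e(H_2)\,p_2^{k-1}}\sum_{\substack{E\subseteq E(H_2)\\|E|=k}} t(H_2[E], U).\]

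I would then choose $U$ with vanishing row marginals, i.e.\ $\int_0^1 U(x,y)\,dy = 0$ for almost every $x$; a concrete choice is $U(x,y) := f(x)f(y)$ with $f := \mathbf{1}_{[0,1/2]} - \mathbf{1}_{(1/2,1]}$. Integrating out any leaf of a tree $T$ with at least one edge then gives $t(T,U)=0$, and since the density of a forest factors over its tree components (with isolated vertices contributing factors of $1$), $t(F,U)=0$ for every forest $F$ having at least one edge. Next I would invoke the girth hypothesis: for each $i$ and $1 \le k \le m-1$, every $k$-edge subset $E$ of $H_i$ spans a forest (possibly with isolated vertices), hence $t(H_i[E],U)=0$ and the coefficient of $\epsilon^k$ in $\Phi$ vanishes. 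For $k=m$, if $E$ contains a cycle then that cycle has length $\ge m$, and since $|E|=m$ it must exhaust $E$; hence $E$ is precisely the edge set of some $m$-cycle of $H_i$, so
\[\sum_{\substack{E\subseteq E(H_i)\\|E|=m}} t(H_i[E], U) = c_m(H_i)\, t(C_m, U).\]

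Combining these observations, the lowest-order potentially nonzero term of $\Phi(\epsilon)$ is of order $\epsilon^m$, with coefficient
\[t(C_m, U)\left[\frac{c_m(H_1)}{e(H_1)\,p_1^{m-1}} - \frac{c_m(H_2)}{e(H_2)\,p_2^{m-1}}\right]\]
(using $(-1)^m = -1$ since $m$ is odd), and a direct computation gives $t(C_m, U) = \bigl(\int_0^1 f^2\bigr)^m = 1 > 0$. If the bracketed quantity were nonzero, the expansion $\Phi(\epsilon) = c\,\epsilon^m + O(\epsilon^{m+1})$ with $c\ne 0$ and $m$ odd would force $\Phi(\epsilon)<0$ for sufficiently small $\epsilon$ of one sign, contradicting $(p_1,p_2)$-commonality. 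Thus the bracket vanishes, which is exactly the claimed identity. The main obstacle is the middle step: one has to verify cleanly that the girth hypothesis isolates exactly the $C_m$ contribution at order $m$ and that the zero-marginal property of $U$ annihilates every other subgraph term through orders $1,\dots,m-1$ as well as the forest contributions at order $m$. Once this bookkeeping is established, the conclusion is forced by the parity of $m$.
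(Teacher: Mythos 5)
Your argument is correct and complete: the perturbation $W_1=p_1+\epsilon U$, $W_2=p_2-\epsilon U$ with a rank-one, zero-marginal $U=f\otimes f$ kills every forest contribution via Lemma~\ref{lem:expansion}, the girth hypothesis ensures orders $1,\dots,m-1$ vanish and order $m$ isolates exactly $c_m(H_i)\,t(C_m,U)$ with $t(C_m,U)=1$, and the oddness of $m$ then forces the coefficient of $\epsilon^m$ to vanish. The present paper does not prove Theorem~\ref{th:girthThm} but imports it from the companion paper, and your perturbative argument is the standard (and almost certainly the intended) route to such necessary conditions, so there is nothing to flag.
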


\repeatThm{SimpleTrees}

\begin{proof}
Let $H_1$ and $H_2$ be simple $C_m$-trees. We label the vertices of $C_m$ by $1,\dots,m$ in the order that they appear on the cycle. By definition, a graph $H$ is a simple $C_m$-tree if and only if $H=J(T',\psi')$ for some $C_m$-gluing template $(T',\psi')$ such that $\psi(s)=V(C_m)$ for all $s\in V(T')$ and $\psi(st)$ is either a singleton or a set consisting of a pair of adjacent vertices of $C_m$ for all $st\in E(T')$. For $i\in\{1,2\}$, let $(T_i',\psi_i')$ be such a $C_m$-gluing template for $H_i$ and let $\ell_i$ be the number of $st\in E(T_i')$ such that $|\psi_i'(st)|=2$. Then, for each $i\in\{1,2\}$,
\[g(H_i)=m,\]
\[v(H_i)=m\cdot v(T_i') - e(T_i')-\ell_i,\]
\[e(H_i)=m\cdot v(T_i')-\ell_i\]
and
\[c_m(H_i) = v(T_i').\]
In particular,
\begin{equation}\label{eq:edgeVerteexCm}e(H_i)-v(H_i)+1 = e(T_i')+1=v(T_i')=c_m(H_i) = \frac{e(H_i)+\ell_i}{m}.\end{equation}
If $(H_1,H_2)$ is $(p_1,p_2)$-common, then, by Theorem~\ref{th:girthThm} and \eqref{eq:edgeVerteexCm},
\[\frac{e(H_1)-v(H_1)+1}{e(H_1)p_1^{m-1}} = \frac{c_m(H_1)}{e(H_1)p_1^{m-1}}=\frac{c_m(H_2)}{e(H_2)p_2^{m-1}}=\frac{e(H_2)-v(H_2)+1}{e(H_2)p_2^{m-1}}.\]
This proves the ``only if'' direction. 

To prove the ``if'' direction, let $(T_i,\psi_i)$ be a $C_m$-gluing template obtained from $(T_i',\psi_i')$ as follows. Let $T_i$ be a tree constructed from $T_i'$ by adding $e(T_i')$ vertices in an arbitrary manner. Let $\psi_i$ agree with $\psi_i'$ on all vertices and edges of $T_i'$ and let it map $\ell_i$ of the new vertices to $\{1,2\}$, map the rest of the new vertices to $\{1\}$ and map every edge incident to at least one of the new vertices to $\emptyset$. 
It is easy to see that $J_i:=J(T_i,\psi_i)$ is the disjoint union of $H_i$ with $\ell_i$ copies of $K_2$ and $e(T_i')-\ell_i$ copies of $K_1$. As $\vec{z}_{T_i,\psi_i} = (e(J_i)/m)\vec{e}_{V(C_m)}$, we see that $(T_i,\psi_i)$ is good. By Theorem~\ref{th:cycleTree}, we have that $(H_1,H_2)$ is $(p_1,p_2)$-common provided that 
\[\frac{e(H_1)+\ell_1}{me(H_1)p_1^{m-1}} = \frac{e(H_2)+\ell_2}{me(H_2)p_2^{m-1}}.\]
By \eqref{eq:edgeVerteexCm}, the above equality is equivalent to
\[\frac{e(H_1)-v(H_1)+1}{e(H_1)p_1^{m-1}} = \frac{e(H_2)-v(H_2)+1}{e(H_2)p_2^{m-1}}\]
and the proof is complete. 
\end{proof}

\section{An Uncommon Graph in a Common Pair}
\label{sec:supersat}

Next, we present the proof of Theorem~\ref{th:K3unionK2}, restated below for convenience. The proof is inspired by an approach from a recent paper on disconnected common graphs~\cite{LeeNoel23+}. We will apply the following classical ``supersaturation version'' of Mantel's Theorem on the edge density of triangle-free graphs. 

\begin{thm}[Goodman's Supersaturation Bound~\cite{Goodman59}]
\label{th:GoodmanSupersat}
For every graphon $W$,
\[t(K_3,W)\geq t(K_2,W)(2t(K_2,W)-1).\]
\end{thm}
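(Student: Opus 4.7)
The plan is to combine a simple pointwise inequality on $[0,1]$ with the Sidorenko property of $P_3$ (already noted earlier in the excerpt). For any $a,b \in [0,1]$, one has $(1-a)(1-b) \geq 0$, which rearranges to the elementary inequality $ab \geq a+b-1$. Since $W$ is a graphon, both $W(x,z)$ and $W(y,z)$ lie in $[0,1]$, so applying this with $a=W(x,z)$ and $b=W(y,z)$ and then multiplying through by the nonnegative quantity $W(x,y)$ yields
\[
W(x,y)W(x,z)W(y,z) \;\geq\; W(x,y)W(x,z) + W(x,y)W(y,z) - W(x,y)
\]
for all $x,y,z \in [0,1]$.

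Next, I would integrate both sides over $[0,1]^3$ and identify each resulting integral as a homomorphism density. The left-hand side is exactly $t(K_3, W)$. On the right-hand side, each of the first two integrals is a path of length two (i.e., $P_3$) and so equals $t(P_3, W)$, while the last integral collapses in one variable to give $t(K_2, W)$. This produces the intermediate bound
\[
t(K_3, W) \;\geq\; 2\,t(P_3, W) - t(K_2, W).
\]

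Finally, I would invoke the fact that $P_3$ is Sidorenko, i.e., $t(P_3, W) \geq t(K_2, W)^2$ for every graphon $W$ (this is an immediate application of Jensen/Cauchy--Schwarz to the degree function $d(x) := \int_0^1 W(x,y)\,dy$, and it has already been used in the excerpt in the proof that $K_3$ is strongly common). Substituting this lower bound gives
\[
t(K_3, W) \;\geq\; 2\,t(K_2, W)^2 - t(K_2, W) \;=\; t(K_2, W)\bigl(2\,t(K_2, W) - 1\bigr),
\]
which is the desired inequality.

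I do not anticipate any real obstacle here: the argument reduces to one pointwise inequality on $[0,1]$, routine recognition of standard homomorphism densities after integration, and one application of Cauchy--Schwarz. The only mild point of care is to ensure the correct identification of the two copies of $t(P_3,W)$, which relies on the symmetry $W(x,y)=W(y,x)$ built into the definition of a graphon.
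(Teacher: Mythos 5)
Your proof is correct. The paper states this theorem as a classical result of Goodman and gives no proof of its own, so there is nothing to compare against; your argument --- the pointwise bound $ab\ge a+b-1$ for $a,b\in[0,1]$, multiplied by $W(x,y)\ge 0$ and integrated to give $t(K_3,W)\ge 2t(P_3,W)-t(K_2,W)$, followed by the Sidorenko property $t(P_3,W)\ge t(K_2,W)^2$ --- is the standard derivation, and every step (including the identification of both cross terms as $t(P_3,W)$ via the symmetry of $W$) checks out.
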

Recall that $D$ denotes the graph obtained from $K_4$ by deleting one edge.
\repeatThm{K3unionK2}

\begin{proof}
Let $W$ be a graphon. Our goal is to show that
\begin{equation}\label{eq:thegoal}\frac{t(D,W)}{5p^4} + \frac{t(K_3\sqcup K_2,1-W)}{4(1-p)^{3}}\geq \frac{p}{5}+\frac{1-p}{4}.\end{equation}
If $t(K_2,W)=0$, then the left side of \eqref{eq:thegoal} is $\frac{1}{4(1-p)^3}$ which is greater than $\frac{p}{4}+\frac{1-p}{5}$. Likewise, if $t(K_2,W)=1$, then the right side of \eqref{eq:thegoal} is $\frac{1}{5p^4}$ which is also greater than $\frac{p}{4}+\frac{1-p}{5}$ for the given value of $p$. So, we can assume that 
\begin{equation}\label{eq:notExtreme}0<t(K_2,W) = 1-t(K_2,1-W)<1.\end{equation}

Define
\[y:=t(K_3,W)-t(K_2,W)^3.\]
Then, by definition of $y$,
\begin{equation}\label{eq:K3W}t(K_3,W)=t(K_2,W)^3+y.\end{equation} 
By Theorem~\ref{th:K3stronglycommon}, we have
\begin{equation}\label{eq:K31-W}t(K_3,1-W)\geq t(K_2,W)^3 + t(K_2,1-W)^3-t(K_3,W) =t(K_2,1-W)^3-y.\end{equation}
Also, by Theorem~\ref{th:GoodmanSupersat} and the definition of $y$, 
\begin{equation}\label{eq:yLB}y \geq t(K_2,W)(2t(K_2,W)-1)-t(K_2,W)^3.\end{equation}
Note that $D$ is a simple $K_3$-tree. So, by Lemma~\ref{lem:goodGluing} (as used in the proof of Theorem~\ref{th:SimpleTrees}) and \eqref{eq:K3W},
\[t(K_2,W)t(D,W)\geq t(K_3,W)^2 = (t(K_2,W)^3+y)^2\]
which implies that
\[t(D,W)\geq \frac{(t(K_2,W)^3+y)^2}{t(K_2,W)}.\]
Note that, by \eqref{eq:notExtreme}, the above inequality does not involve a division by zero. By \eqref{eq:K3W},
\[t(K_3\sqcup K_2,1-W)=t(K_3,1-W)t(K_2,1-W)\geq (t(K_2,1-W)^3-y)t(K_2,1-W).\]
Now, define $x:=t(K_2,W)-p$ so that $t(K_2,W)=p+x$ and $t(K_2,1-W)=1-p-x$. By \eqref{eq:notExtreme}, we have $-p<x<1-p$. By the lower bounds on $t(D,W)$ and $t(K_3\sqcup K_2,1-W)$ derived above, we can bound the left side of \eqref{eq:thegoal} below by the minimum of
\[f(x,y):=\frac{((p+x)^3+y)^2}{5p^4(p+x)} + \frac{(1-p-x)((1-p-x)^3-y)}{4(1-p)^3}\]
over all real numbers $x$ and $y$ such that $-p< x< 1-p$ and $y\geq y_0(x)$ where $y_0(x):=(p+x)(2(p+x)-1) - (p+x)^3$; the constraint $y\geq y_0(x)$ comes from \eqref{eq:yLB}.  

First, we analyze the partial derivatives of $f$ with respect to $y$. We have
\[\frac{\partial f}{\partial y}= \frac{2(p+x)^2}{5p^4} + \frac{2y}{5p^4(p+x)}-\frac{1-p-x}{4(1-p)^3}.\]
For each fixed $x$ in the range $-p<x<1-p$, define
\[y_1(x) := \frac{5(1-p-x)(p+x)p^4}{8(1-p)^3} - (p+x)^3\] 
and note that $\frac{\partial f}{\partial y}(x,y_1(x))=0$ for all $x$. Also, we have
\[\frac{\partial^2 f}{\partial y^2} = \frac{2}{5p^4(p+x)}\]
which is positive. Thus, for each fixed $x$ in the range $-p<x<1-p$, the minimum of $f(x,y)$ over $y\geq y_0(x)$ is attained at $y=y_1(x)$ if $y_1(x)\geq y_0(x)$ or at $y=y_0(x)$ otherwise. 

Our next goal is to bound the function $g_0(x):=f(x,y_0(x))$ from below for all $x$ such that $-p<x<1-p$. We have
\[g_0(x) = \frac{((p+x)^3+y_0(x))^2}{5p^4(p+x)} + \frac{(1-p-x)((1-p-x)^3-y_0(x))}{4(1-p)^3}\]
\[ = \frac{(p+x)^2(2(p+x)-1)^2}{5p^4(p+x)} + \frac{(1-p-x)((1-p-x)^3 + (p+x)^3-(p+x)(2(p+x)-1))}{4(1-p)^3}\]
\[=\left(\frac{1065+336\sqrt{10}}{400}\right)x^3+\left(\frac{379+118\sqrt{10}}{80}\right)x^2-\left(\frac{135+72\sqrt{10}}{320}\right)x+\frac{52-3\sqrt{10}}{160}.\]
Thus, the minimum of $g_0(x)$ over all $-p<x<1-p$ is approximately $0.23263$, attained at $x\approx 0.057472$. So, $g_0(x)>\frac{p}{5}+\frac{1-p}{4}$ for all relevant choices of $x$. 

The final step is to bound $g_1(x):=f(x,y_1(x))$ for all $x$ satisfying $-p<x<1-p$ and $y_1(x)\geq y_0(x)$. First,
\[y_1(x)-y_0(x) = \frac{5(1-p-x)(p+x)p^4}{8(1-p)^3} - (p+x)(2(p+x)-1)\]
\[=\frac{p+x}{8(1-p)^3}\left(5(1-p-x)p^4 - 8(1-p)^3(2(p+x)-1)\right).\]
The above expression is non-negative if and only if
\[x\leq\frac{-5 p^5 + 21 p^4 - 56 p^3 + 72 p^2 - 40 p + 8}{5 p^4 - 16 p^3 + 48 p^2 - 48 p + 16} = \frac{-425 + 140 \sqrt{10}}{246}<0.08.\]
Thus, it suffices to show that $g_1(x)\geq \frac{p}{5}+\frac{1-p}{4}$ for all $-0.6\leq x\leq 0.08$, where the lower bound on $x$ comes from the fact that $p<0.6$. We have
\[g_1(x) = \frac{((p+x)^3+y_1(x))^2}{5p^4(p+x)} + \frac{(1-p-x)((1-p-x)^3-y_1(x))}{4(1-p)^3}\]
\[=\frac{5(1-p-x)^2(p+x)p^4}{64(1-p)^6}\]
\[+ \frac{(1-p-x)(8(1-p)^3(1-p-x)^3+8(1-p)^3(p+x)^3 - 5(1-p-x)(p+x)p^4)}{32(1-p)^6}\]
\[=-\left(\frac{487 + 154 \sqrt{10}}{100}\right)x^3+\left(\frac{79 + 25 \sqrt{10}}{50}\right)x^2 + \frac{7 + 2\sqrt{10}}{60}.\]
Thus, $\frac{dg_1}{dx}(0) = 0$. Also, 
\[\frac{d^2g_1}{dx^2}(x)= \frac{79+25\sqrt{10}}{25} - \left(\frac{1461 + 462 \sqrt{10}}{50}\right)x\]
which is positive for all $x\leq \frac{-6 + 2 \sqrt{10}}{3}\approx 0.108$. Therefore, the minimum of $g_1(x)$ over all $x$ in the range $-0.6\leq x\leq 0.08$ is attained at $x=0$. We are now done by observing that $g_1(0)=\frac{7+2\sqrt{10}}{60}=\frac{p}{5}+\frac{1-p}{4}$.
\end{proof}

\section{Conclusion}
\label{sec:concl}

As we have seen in this paper, strongly common graphs are very useful for generating examples of $(p_1,p_2)$-common pairs of graphs. A natural problem is to classify such graphs. 

\begin{prob}
\label{prob:stronglyCommon}
Classify strongly common graphs. 
\end{prob}

Note that, by the results of~\cite{ChenMa23+,Versteegen23+,KimLee22+}, the strongly common graphs with odd girth are precisely the odd cycles. It would be interesting to know whether there exists a non-bipartite strongly common graph which is not an odd cycle and, more ambitiously, a strongly common graph of high chromatic number. Examples of the latter would strengthen the recent breakthrough result of Kr\'a\v{l}, Volec and Wei~\cite{KralVolecWei22+}.

\begin{ques}
Does there exist a non-bipartite strongly common graph $H$ such that $H$ is not isomorphic to an odd cycle?
\end{ques}

\begin{ques}
Do there exist strongly common graphs of arbitrary chromatic number?
\end{ques}

In particular, it already seems difficult to determine whether a $4$-chromatic strongly common graph can exist. Given that Chen and Ma~\cite{ChenMa23+} proved that strongly common graphs other than $K_3$ must be triangle-free, perhaps a natural family of graphs to consider are those which arise from the Mycielski construction. The question of whether such graphs are common was asked by Conlon, Fox and Sudakov~\cite[Section~2.6]{ConlonFoxSudakov15}; it would also be interesting to know if they are strongly common.

The recent results of~\cite{ChenMa23+,Versteegen23+} suggest that the class of strongly common graphs is rather small, which limits the applicability of results like Lemma~\ref{lem:convexity}. It would therefore be interesting to extend the approach in this paper to graphs which satisfy a weaker hypothesis than being strongly common. 

Theorem~\ref{th:K3unionK2} demonstrates that, if $(H_1,H_2)$ is $(p,1-p)$-common for some $p\in (0,1)$, then $H_1$ and $H_2$ need not both be common. However, we conjecture that at least one of them must be common. 

\begin{conj}
Let $H_1$ and $H_2$ be graphs. If there exists $p\in (0,1)$ such that $(H_1,H_2)$ is $(p,1-p)$-common, then at least one of $H_1$ or $H_2$ is common.
\end{conj}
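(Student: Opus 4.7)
My plan is to argue by contrapositive: suppose that neither $H_1$ nor $H_2$ is common, and show that $(H_1,H_2)$ is not $(p,1-p)$-common for any $p\in(0,1)$. The first step is a symmetrization. Writing $s_H(W):=t(H,W)+t(H,1-W)$, so that $H$ is common if and only if $s_H(W)\geq 2^{-(e(H)-1)}$ for every graphon $W$, I would apply \eqref{eq:commonPair} to the pairs $(W_1,W_2)=(W,1-W)$ and $(W_1,W_2)=(1-W,W)$ and average. This yields the weaker but symmetric necessary condition
\[
\frac{s_{H_1}(W)}{e(H_1)p^{e(H_1)-1}}+\frac{s_{H_2}(W)}{e(H_2)(1-p)^{e(H_2)-1}}\geq \frac{2p}{e(H_1)}+\frac{2(1-p)}{e(H_2)}
\]
for every graphon $W$. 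At $p=1/2$, substituting the common thresholds $s_{H_i}=2^{-(e(H_i)-1)}$ into the left-hand side makes it exactly equal to the right-hand side; hence any graphon $W$ with $s_{H_i}(W)<2^{-(e(H_i)-1)}$ for both $i\in\{1,2\}$ would strictly violate the symmetrized inequality, refuting $(1/2,1/2)$-commonality.

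The crux of the argument at $p=1/2$ therefore reduces to constructing, from individual uncommonness witnesses $W^{(1)}$ for $H_1$ and $W^{(2)}$ for $H_2$, a single graphon $W$ with $s_{H_i}(W)<2^{-(e(H_i)-1)}$ for both $i$. I would investigate several candidate constructions: a two-block ``direct sum'' graphon placing $W^{(1)}$ on $[0,1/2]^2$ and $W^{(2)}$ on $[1/2,1]^2$ with a tuned constant in the off-diagonal block; a tensor-product graphon in which one coordinate carries the structure of $W^{(1)}$ and the other that of $W^{(2)}$; and small perturbations of the constant graphon $W\equiv 1/2$ along directions suggested by a second-order expansion of $s_{H_i}$. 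Each such construction admits an explicit formula for $s_{H_i}(W)$ in terms of the contributions of $W^{(1)}$, $W^{(2)}$, and the gluing parameters, and the plan is to tune these so that the deficits of both witnesses are preserved.

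For general $p\in(0,1)$ the symmetrized inequality is no longer tight at the common thresholds, so the $p=1/2$ argument is not directly portable. Here I would instead work with \eqref{eq:commonPair} itself, setting $W_1=W$ and $W_2=1-W$ and combining the $p=1/2$ simultaneous witness with a perturbation analysis around the equality point $W\equiv p$. Computing the Taylor expansion of $F_p(W):=\frac{t(H_1,W)}{e(H_1)p^{e(H_1)-1}}+\frac{t(H_2,1-W)}{e(H_2)(1-p)^{e(H_2)-1}}$ about $W=p$, the first-order term vanishes and the second-order coefficient is a non-negative multiple of $t(P_3,W-p)$, so any violation is a genuinely global effect; one then hopes that a rescaled version of the $p=1/2$ simultaneous witness, superimposed on the background $W\equiv p$, still drives $F_p(W)$ below $\frac{p}{e(H_1)}+\frac{1-p}{e(H_2)}$. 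A continuity-in-$p$ argument, anchored at the clean case $p=1/2$, may then close the remaining gap.

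The main obstacle, by a wide margin, is the simultaneous-witness step. Convex combinations $\alpha W^{(1)}+(1-\alpha)W^{(2)}$ tend to dilute each witness's deficit back toward the common threshold, and the direct-sum and tensor constructions introduce additional homomorphism contributions whose signs are delicate and depend on how $H_1$ and $H_2$ distribute across the pieces of $W$. A potentially useful avenue is to exploit the fact that the known uncommonness witnesses---Sidorenko's paw-type construction, Thomason's construction for $K_k$, and their disjoint-union generalizations---are all small weighted blow-ups of a template close to the quasirandom graphon, and to search inside this structured class for a single graphon realizing both deficits. Establishing that this is always possible for any pair of uncommon graphs is, I expect, the true content of the conjecture.
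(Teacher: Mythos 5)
This statement is not proved in the paper: it is posed as an open conjecture in Section~\ref{sec:concl}, so there is no proof of record to compare against, and your text is accordingly a research plan rather than a proof. Your symmetrization step is sound as far as it goes: averaging \eqref{eq:commonPair} over $(W,1-W)$ and $(1-W,W)$ does give the stated necessary condition, and at $p=1/2$ it is tight at the common thresholds, so a single graphon $W$ with $s_{H_1}(W)<2^{-(e(H_1)-1)}$ and $s_{H_2}(W)<2^{-(e(H_2)-1)}$ would indeed refute $(1/2,1/2)$-commonality. But the existence of such a simultaneous witness for an arbitrary pair of uncommon graphs is exactly what you do not establish; as you concede yourself, none of the candidate constructions (direct sums, tensor products, perturbations of $1/2$) is carried out, and each has the dilution problem you identify. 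This is the missing idea, not a routine verification.

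The second, independent gap is the passage to general $p$, and it is arguably worse. For $p\neq 1/2$ the symmetrized inequality is no longer tight at the common thresholds, so small commonness deficits of $H_1$ and $H_2$ need not produce any violation; the contrapositive requires ruling out \emph{every} $p\in(0,1)$, and your proposed tools (a second-order expansion showing the first-order term vanishes, plus ``continuity in $p$'') only show that violations must be global, which is consistent with the pair being $(p,1-p)$-common. Theorem~\ref{th:K3unionK2} in the paper is a concrete warning here: $(D,K_3\sqcup K_2)$ is $(p,1-p)$-common at a specific $p\neq 1/2$ even though $K_3\sqcup K_2$ is uncommon, so the interaction between the witness for one graph and the weighting by $p^{e(H_1)-1}$ and $(1-p)^{e(H_2)-1}$ is genuinely delicate. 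In short: the reduction to a simultaneous-witness problem at $p=1/2$ is a reasonable first observation, but both the witness construction and the treatment of $p\neq 1/2$ remain open, so the conjecture is not proved by this proposal.
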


\bibliographystyle{plain}
\bibliography{common}

\appendix

\section{Proof of the Convexity Lemma}
\label{app:convexity}

We present the proof of Lemma~\ref{lem:convexity} which we restate here for convenience.

\begin{replemma}{lem:convexity}
Let $F$ be strongly common and let $H_1$ and $H_2$ be non-empty graphs. If $k_1,k_2,\ell_1,\ell_2$ are non-negative integers, $p_1,p_2\in (0,1)$ such that $p_1+p_2=1$ and conditions \eqref{eq:H_ge_F}--\eqref{eq:thecorrelation} below are satisfied, then $(H_1,H_2)$ is $(p_1,p_2)$-common.
\begin{enumerate}
    \item[\eqref{eq:H_ge_F}] $e(H_i) \ge e(F)$  for $i\in\{1,2\}$,
    \item[\eqref{eq:ehef}] $e(H_i)=k_ie(F)-\ell_i$ for $i\in \{1,2\}$,
    \item[\eqref{eq:p1p2}] $\frac{k_1}{e(H_1)p_1^{e(F)-1}}= \frac{k_2}{e(H_2)p_2^{e(F)-1}}$,
   \item[\eqref{eq:thecorrelation}]$t(H_i,W)t(K_2,W)^{\ell_i}\geq t(F,W)^{k_i}$ for every graphon $W$ and $i\in\{1,2\}$.
\end{enumerate}
\end{replemma}

\begin{proof}[Proof of Lemma~\ref{lem:convexity}]
Let $W_1$ and $W_2$ be graphons such that $W_1+W_2=1$. First, suppose that $t(K_2,W_1)=1$. In this case, we have $t(H_1,W_1)=1=(p_1+p_2)^{e(H_1)}$ and $t(H_2,W_2)=0=(p_2-p_2)^{e(H_2)}$. We apply Bernoulli's Inequality (i.e. the fact that $(1+x)^m\geq 1+mx$ for $x\geq -1$ and $m\geq1$) to get
\[\frac{t(H_1,W_1)}{e(H_1)p_1^{e(H_1)-1}} + \frac{t(H_2,W_2)}{e(H_2)p_2^{e(H_2)-1}} = \frac{(p_1+p_2)^{e(H_1)}}{e(H_2)p_2^{e(H_2)-1}} + \frac{(p_2-p_2)^{e(H_2)}}{e(H_1)p_1^{e(H_1)-1}}\]
\[= \frac{p_1(1+p_2/p_1)^{e(H_1)}}{e(H_1)} + \frac{p_2(1-1)^{e(H_1)}}{e(H_2)}\geq \frac{p_1 + p_2e(H_1)}{e(H_1)}+\frac{p_2-p_2e(H_2)}{e(H_2)}=\frac{p_1}{e(H_1)}+\frac{p_2}{e(H_2)}.\]
The case that $t(K_2,W_1)=0$ is similar. 

So, from here forward, we may assume that
\begin{equation}\label{eq:notZero}0<t(K_2,W_1),t(K_2,W_2)<1.\end{equation}
Without loss of generality, 
\begin{equation}\label{eq:wlog}t(F,W_1)-t(K_2,W_1)^{e(F)}\leq t(F,W_2)-t(K_2,W_2)^{e(F)}.\end{equation} 
Define
\[y:=\max\{0,t(K_2,W_1)^{e(F)}-t(F,W_1)\}.\]
Then, by definition of $y$, we have
\begin{equation}
\label{eq:tFW1}
t(F,W_1)\geq t(K_2,W_1)^{e(F)}-y\geq0.
\end{equation}
Since $F$ is strongly common,
\[t(F,W_1)+t(F,W_2)\geq t(K_2,W_1)^{e(F)}+t(K_2,W_2)^{e(F)}\]
and so, subtracting $t(F,W_1)$ from both sides,
\[t(F,W_2)\geq t(K_2,W_1)^{e(F)}+t(K_2,W_2)^{e(F)} - t(F,W_1).\]
If $y=t(K_2,W_1)^{e(F)}-t(F,W_1)\geq0$, then the above inequality translates to $t(F,W_2)\geq t(K_2,W_2)^{e(F)} + y\geq0$. On the other hand, if $y=0$, then $t(F_1,W_1)\geq t(K_2,W_1)^{e(F)}$ which, by \eqref{eq:wlog}, implies that $t(F_2,W_2)\geq t(K_2,W_2)^{e(F)}=t(K_2,W_2)^{e(F)}+y$. In either case, 
\begin{equation}
\label{eq:tFW2}
t(F_2,W_2)\geq t(K_2,W_2)^{e(F)}+y \geq0.
\end{equation}
Let $x:=t(K_2,W_1)-p_1$ so that $t(K_2,W_1)=p_1+x$ and $t(K_2,W_2)=p_2-x$. By \eqref{eq:notZero}, we have $-p_1<x<p_2$. By \eqref{eq:thecorrelation},
\[\frac{t(H_1,W_1)}{e(H_1)p_1^{e(H_1)-1}}+\frac{t(H_2,W_2)}{e(H_2)p_2^{e(H_2)-1}}\geq \frac{t(F,W_1)^{k_1}}{t(K_2,W_1)^{\ell_1}e(H_1)p_1^{e(H_1)-1}}+\frac{t(F,W_2)^{k_2}}{t(K_2,W_2)^{\ell_2}e(H_2)p_2^{e(H_2)-1}}\]
which, by \eqref{eq:tFW1} and \eqref{eq:tFW2}, is at least
\begin{equation}\label{eq:mainThingy} 
\frac{((p_1+x)^{e(F)}-y)^{k_1}}{(p_1+x)^{\ell_1}e(H_1)p_1^{e(H_1)-1}}+\frac{((p_2-x)^{e(F)}+y)^{k_2}}{(p_2-x)^{\ell_2}e(H_2)p_2^{e(H_2)-1}}.
\end{equation}
Define $f:(-p_1,p_2)\times \mathbb{R}\to \mathbb{R}$ by
\[f(x,y):=\frac{((p_1+x)^{e(F)}-y)^{k_1}}{(p_1+x)^{\ell_1}e(H_1)p_1^{e(H_1)-1}}+\frac{((p_2-x)^{e(F)}+y)^{k_2}}{(p_2-x)^{\ell_2}e(H_2)p_2^{e(H_2)-1}}\]
We will be done if we can show that $f(x,y)\geq\frac{p_1}{e(H_1)}+\frac{p_2}{e(H_2)}$ for all $x$ and $y$ in its domain such that $-(p_2-x)^{e(F)}\leq y\leq (p_1+x)^{e(F)}$; note that these constraints on $y$ come from the positivity constraints in \eqref{eq:tFW1} and \eqref{eq:tFW2}.

The partial derivative of $f$ with respect to $y$ is 
\begin{equation}\label{eq:partialy}\frac{\partial f}{\partial y}(x,y) = \frac{-k_1((p_1+x)^{e(F)}-y)^{k_1-1}}{(p_1+x)^{\ell_1}e(H_1)p_1^{e(H_1)-1}}+\frac{k_2((p_2-x)^{e(F)}+y)^{k_2-1}}{(p_2-x)^{\ell_2}e(H_2)p_2^{e(H_2)-1}}\end{equation}
\[=-\frac{k_1}{e(H_1)p_1^{e(F)-1}}\left(\frac{((p_1+x)^{e(F)}-y)^{k_1-1}}{(p_1+x)^{\ell_1}p_1^{e(H_1)-e(F)}}\right)+\frac{k_2}{e(H_2)p_2^{e(F)-1}}\left(\frac{((p_2-x)^{e(F)}+y)^{k_2-1}}{(p_2-x)^{\ell_2}p_2^{e(H_2)-e(F)}}\right)\]
By \eqref{eq:p1p2}, this can be simplified to 
\begin{equation}\label{eq:simplifiedPartial}\frac{\partial f}{\partial y}(x,y)  = \frac{k_1}{e(H_1)p_1^{e(F)-1}}\left(- \frac{((p_1+x)^{e(F)}-y)^{k_1-1}}{(p_1+x)^{\ell_1}p_1^{e(H_1)-e(F)}}+\frac{((p_2-x)^{e(F)}+y)^{k_2-1}}{(p_2-x)^{\ell_2}p_2^{e(H_2)-e(F)}}\right).\end{equation}
For each $x$ in the range $-p_1<x<p_2$, we have that $\frac{\partial f}{\partial y}(x,y)$ is negative at $y=-(p_2-x)^{e(F)}$ if $k_2 \ne 1$ and positive at $y=(p_1+x)^{e(F)}$ if $k_1 \ne 1$. Also, 
\[\frac{\partial^2 f}{\partial y^2}(x,y) = \frac{k_1}{e(H_1)p_1^{e(F)-1}}\left( \frac{(k_1-1)((p_1+x)^{e(F)}-y)^{k_1-2}}{(p_1+x)^{\ell_1}p_1^{e(H_1)-e(F)}}+\frac{(k_2-1)((p_2-x)^{e(F)}+y)^{k_2-2}}{(p_2-x)^{\ell_2}p_2^{e(H_2)-e(F)}}\right)\]
which is non-negative for all $x$ in the range $-p_1<x<p_2$ and $y$ in the range $-(p_2-x)^{e(F)}\leq y\leq (p_1+x)^{e(F)}$. Therefore, if $k_1,k_2 \ne 1$, for each $-p_1<x<p_2$, there is a unique choice of $y(x)$ such that $-(p_2-x)^{e(F)}<y(x)<(p_1+x)^{e(F)}$ and $\frac{\partial f}{\partial y}(x,y(x))=0$. 

If $k_1=1$ and $k_2 \ne 1$ then by \eqref{eq:ehef} we have $\ell_1 = 0$ and so 
\begin{equation}\frac{\partial f}{\partial y}(x,y)  = \frac{k_1}{e(H_1)p_1^{e(F)-1}}\left(
- 1 +
\frac{((p_2-x)^{e(F)}+y)^{k_2-1}}{(p_2-x)^{\ell_2}p_2^{e(H_2)-e(F)}}\right).\end{equation}
We observe that there is a unique choice of $y(x)$ such that $\frac{\partial f}{\partial y}(x,y) = 0$, but this $y(x)$ may not lie in the domain  $-(p_2-x)^{e(F)}\leq y\leq (p_1+x)^{e(F)}$. In this case, we simply extend the domain for $y$ to include this value and find a minimum for $f(x,y)$ on this larger domain. 
The case where $k_2 = 1$ and $k_1 \ne 1$ is similar. If $k_1 = k_2 = 1$, observe that $f(x,y)$ has no dependence on $y$ so we take $y(x) = 0$ for all $x$.

We pause to prove a claim. 

\begin{claim}
\label{claim:sameSign} 
For $-p_1< x< p_2$, if $x > 0$ then $y(x) \ge 0$, and if $x < 0$ then $y(x) \le 0$. 
\end{claim}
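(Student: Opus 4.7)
The plan is to exploit the convexity just established in the preceding paragraph: on the region where $y(x)$ is defined, $\partial^2 f/\partial y^2 \ge 0$, so $y \mapsto \partial f/\partial y(x,y)$ is non-decreasing. Since $y(x)$ is characterized as the unique zero of $\partial f/\partial y(x,\cdot)$, monotonicity immediately gives the implications: $y(x) \ge 0$ whenever $\partial f/\partial y(x,0) \le 0$, and $y(x) \le 0$ whenever $\partial f/\partial y(x,0) \ge 0$. Thus the claim reduces to showing that $\partial f/\partial y(x,0)$ has the opposite sign to $x$.

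To compute $\partial f/\partial y(x,0)$ I would start from the simplified form \eqref{eq:simplifiedPartial}, substitute $y=0$, and use hypothesis \eqref{eq:ehef} (that is, $\ell_i = k_i e(F) - e(H_i)$) to collapse the exponents. After a short calculation, together with assumption \eqref{eq:p1p2} which equates the prefactors $k_i/(e(H_i)p_i^{e(F)-1})$ for $i \in \{1,2\}$, the expression should reduce to
\[
\frac{\partial f}{\partial y}(x,0) = \frac{k_1}{e(H_1)p_1^{e(F)-1}}\left(\left(\frac{p_2-x}{p_2}\right)^{e(H_2)-e(F)} - \left(\frac{p_1+x}{p_1}\right)^{e(H_1)-e(F)}\right).
\]
The prefactor is strictly positive, and by hypothesis \eqref{eq:H_ge_F} both exponents $e(H_i) - e(F)$ are non-negative integers.

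From this formula the sign argument is immediate. For $x > 0$ we have $(p_1+x)/p_1 > 1$ and $(p_2-x)/p_2 < 1$, so the bracketed difference is non-positive; hence $\partial f/\partial y(x,0) \le 0$, giving $y(x) \ge 0$ by the monotonicity reduction. The case $x < 0$ is symmetric, and $x=0$ forces the bracket to vanish, which is consistent with either inequality. The degenerate cases $k_1 = 1$ or $k_2 = 1$ require only minor care: if $k_1 = k_2 = 1$, then $f$ is independent of $y$ and $y(x) \equiv 0$ by the convention in the proof; otherwise the monotonicity argument still applies on the extended $y$-domain on which $y(x)$ is defined, and the sign calculation is unchanged. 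I do not expect any serious obstacle here; the only thing requiring real care is the algebraic cancellation that collapses the exponent $e(F)(k_i - 1) - \ell_i$ down to $e(H_i) - e(F)$ via \eqref{eq:ehef}.
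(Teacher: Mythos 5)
Your proposal is correct and follows essentially the same route as the paper: the paper also reduces the claim to the sign of the bracketed quantity $\bigl(\frac{p_2-x}{p_2}\bigr)^{e(H_2)-e(F)}-\bigl(\frac{p_1+x}{p_1}\bigr)^{e(H_1)-e(F)}$ obtained by collapsing the exponents via \eqref{eq:ehef}, merely packaging the monotonicity-in-$y$ step as a proof by contradiction (bounding the stationarity condition at $(x,y(x))$ under the assumption $y(x)>0$) rather than by evaluating $\partial f/\partial y$ at $y=0$ and invoking convexity. The two arguments are interchangeable, and your handling of the degenerate cases $k_i=1$ matches the paper's.
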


\begin{proof}[Proof of Claim]
Suppose that $0 < y(x)\leq(p_1+x)^{e(F)}$. Then, by definition of $y(x)$ and \eqref{eq:simplifiedPartial},
\[0=\frac{\partial f}{\partial y}(x,y(x)) = \frac{k_1}{e(H_1)p_1^{e(F)-1}}\left(- \frac{((p_1+x)^{e(F)}-y(x))^{k_1-1}}{(p_1+x)^{\ell_1}p_1^{e(H_1)-e(F)}}+\frac{((p_2-x)^{e(F)}+y(x))^{k_2-1}}{(p_2-x)^{\ell_2}p_2^{e(H_2)-e(F)}}\right)\]
\[> \frac{k_1}{e(H_1)p_1^{e(F)-1}}\left(- \frac{(p_1+x)^{e(F)(k_1-1)}}{(p_1+x)^{\ell_1}p_1^{e(H_1)-e(F)}}+\frac{(p_2-x)^{e(F)(k_2-1)}}{(p_2-x)^{\ell_2}p_2^{e(H_2)-e(F)}}\right)\]
where the strict inequality follows as $y(x) > 0$ implies $k_1$, $k_2$ are not both $1$.
By \eqref{eq:ehef} this is equal to
\[\frac{k_1}{e(H_1)p_1^{e(F)-1}}\left(-\left(\frac{p_1+x}{p_1}\right)^{e(H_1)-e(F)} + \left(\frac{p_2-x}{p_2}\right)^{e(H_2)-e(F)}\right).\]
If $x<0$, then this expression is $\ge 0$, which is a contradiction. Thus, if $y > 0$ then $x\geq0$ and, taking the contrapositive, if $x < 0$ then $y \le 0$. The proof that if $x > 0$ then $y\leq 0$ is similar.
\end{proof}

Now, for each $x\in (-p_1,p_2)$, let $g(x):=f(x,y(x))$. Note that 
\[g(0)=f(0,0) = \frac{p_1^{k_1e(F)}}{p_1^{e(H_1)-1+\ell_1}e(H_1)}+\frac{p_2^{k_2e(F)}}{p_2^{e(H_2)-1+\ell_2}e(H_2)}=\frac{p_1}{e(H_1)}+\frac{p_2}{e(H_2)}\]
where the last equality uses \eqref{eq:ehef}. So, we will be done if we can show that $g(x)$ is minimized at $x=0$. We have that $\frac{dg}{dx}$ is equal to
\[\frac{k_1((p_1+x)^{e(F)}-y(x))^{k_1-1}(e(F)(p_1+x)^{e(F)-1} -y'(x))}{(p_1+x)^{\ell_1}e(H_1)p_1^{e(H_1)-1}} - \frac{\ell_1((p_1+x)^{e(F)}-y(x))^{k_1}}{(p_1+x)^{\ell_1+1}e(H_1)p_1^{e(H_1)-1}}\]
\[+\frac{k_2((p_2-x)^{e(F)}+y(x))^{k_2-1}(-e(F)(p_2-x)^{e(F)-1} +y'(x))}{(p_2-x)^{\ell_2}e(H_2)p_2^{e(H_2)-1}} +\frac{\ell_2((p_2-x)^{e(F)}+y(x))^{k_2}}{(p_2-x)^{\ell_2+1}e(H_2)p_2^{e(H_2)-1}}\]
Recalling the expression for $\frac{\partial f}{\partial y}$ in \eqref{eq:partialy} and using the fact that $\frac{\partial f}{\partial y}(x,y(x))=0$, we see that all of the terms involving $y'(x)$ cancel, leaving us with
\[\frac{k_1((p_1+x)^{e(F)}-y(x))^{k_1-1}e(F)(p_1+x)^{e(F)-1}}{(p_1+x)^{\ell_1}e(H_1)p_1^{e(H_1)-1}} - \frac{\ell_1((p_1+x)^{e(F)}-y(x))^{k_1}}{(p_1+x)^{\ell_1+1}e(H_1)p_1^{e(H_1)-1}}\]
\[-\frac{k_2((p_2-x)^{e(F)}+y(x))^{k_2-1}e(F)(p_2-x)^{e(F)-1} }{(p_2-x)^{\ell_2}e(H_2)p_2^{e(H_2)-1}} +\frac{\ell_2((p_2-x)^{e(F)}+y(x))^{k_2}}{(p_2-x)^{\ell_2+1}e(H_2)p_2^{e(H_2)-1}}\]
\[=\left(\frac{((p_1+x)^{e(F)}-y(x))^{k_1-1}}{(p_1+x)^{\ell_1}p_1^{e(H_1)-e(F)}}\right)\left(\frac{k_1e(F)(p_1+x)^{e(F)}}{(p_1+x)e(H_1)p_1^{e(F)-1}} - \frac{\ell_1((p_1+x)^{e(F)}-y(x))}{(p_1+x)e(H_1)p_1^{e(F)-1}}\right)\]
\[+\left(\frac{((p_2-x)^{e(F)}+y(x))^{k_2-1}}{(p_2-x)^{\ell_2}p_2^{e(H_2)-e(F)}}\right)\left(-\frac{k_2e(F)(p_2-x)^{e(F)} }{(p_2-x)e(H_2)p_2^{e(F)-1}} +\frac{\ell_2((p_2-x)^{e(F)}+y(x))}{(p_2-x)e(H_2)p_2^{e(F)-1}}\right).\]
Using the expression for $\frac{\partial f}{\partial y}$ in \eqref{eq:simplifiedPartial} and the fact that $\frac{\partial f}{\partial y}(x,y(x))=0$, we observe that
\[\frac{((p_1+x)^{e(F)}-y(x))^{k_1-1}}{(p_1+x)^{\ell_1}p_1^{e(H_1)-e(F)}} = \frac{((p_2-x)^{e(F)}+y(x))^{k_2-1}}{(p_2-x)^{\ell_2}p_2^{e(H_2)-e(F)}}.\]
So, letting $b(x)$ denote this quantity and using \eqref{eq:ehef}, we can rewrite the above expression for $\frac{dg}{dx}$ as
\[=b(x)\left(\frac{k_1e(F)(p_1+x)^{e(F)}}{(p_1+x)e(H_1)p_1^{e(F)-1}} - \frac{\ell_1((p_1+x)^{e(F)}-y(x))}{(p_1+x)e(H_1)p_1^{e(F)-1}}\right)\]
\[+b(x)\left(-\frac{k_2e(F)(p_2-x)^{e(F)} }{(p_2-x)e(H_2)p_2^{e(F)-1}} +\frac{\ell_2((p_2-x)^{e(F)}+y(x))}{(p_2-x)e(H_2)p_2^{e(F)-1}}\right)\]
which, by \eqref{eq:ehef}, is equal to
\[b(x)\left(\left(\frac{p_1+x}{p_1}\right)^{e(F)-1} - \left(\frac{p_2-x}{p_2}\right)^{e(F)-1}\right)\]
\[+ b(x)y(x)\left(\frac{\ell_1}{(p_1+x)e(H_1)p_1^{e(F)-1}} + \frac{\ell_2}{(p_2-x)e(H_2)p_2^{e(F)-1}}\right).\]
Note that $b(x)>0$ for all $-p_1<x<p_2$. Thus, by Claim~\ref{claim:sameSign}, if $x>0$, then the first term in the above expression is positive and the second is non-negative and, if $x<0$, then the first term is negative and the second is non-positive. Therefore, the only critical point of $g$ is at $x=0$ and $g$ is decreasing to the left of this critical point and increasing to the right of it. This implies that $x=0$ is the global minimum of $g$ and completes the proof. 
\end{proof}

\end{document}